\providecommand{\tabularnewline}{\\}
\numberwithin{equation}{section}
\numberwithin{figure}{section}
\theoremstyle{plain}
\newtheorem{thm}{\protect\theoremname}[section]
  \theoremstyle{plain}
  \newtheorem{lem}[thm]{\protect\lemmaname}
  \theoremstyle{plain}
  \newtheorem{cor}[thm]{\protect\corollaryname}
\DeclareMathOperator{\Des}{Des}
\DeclareMathOperator{\pk}{pk}
\DeclareMathOperator{\lpk}{lpk}
\DeclareMathOperator{\Comp}{Comp}
\DeclareMathOperator{\br}{br}
\DeclareMathOperator{\udr}{udr}
\DeclareMathOperator{\des}{des}
\DeclareMathOperator{\altdes}{altdes}
\DeclareMathOperator{\st}{st}
\DeclareMathOperator{\inv}{inv}
\DeclareMathOperator{\val}{val}
\DeclareMathOperator{\Exp}{Exp}
\DeclareMathOperator{\maj}{maj}
\DeclareMathOperator{\imaj}{imaj}
\DeclareMathOperator{\fdes}{fdes}
\DeclareMathOperator{\Orb}{Orb}
\DeclareMathOperator{\Av}{Av}
\DeclareMathOperator{\dasc}{dasc}
\DeclareMathOperator{\ddes}{ddes}
\DeclareMathOperator{\hk}{hk}
\DeclareMathOperator{\nlc}{nlc}
\DeclareMathOperator{\tc}{tc}
\tikzstyle{pathdefault}=[draw, line width=1, solid, color=black]
\tikzstyle{nodedefault}=[circle, inner sep=1.5, fill=black]
\tikzstyle{empty}=[]
\tikzstyle{nodeellipsis}=[circle, inner sep=0.5, fill=black]
\tikzstyle{pathcolor1}=[draw, line width=1.3, densely dashed, color=red]
\tikzstyle{pathcolor2}=[draw, line width=1.6, densely dotted, color=blue]
\tikzstyle{pathcolorlight}=[draw, line width=1, dotted, color=lightgray]
\tikzstyle{arbpathcolor0}=[line width=1, dashdotted, color=black]
\tikzstyle{arbpathcolor1}=[line width=1, densely dashed, color=red]
\tikzstyle{arbpathdefault}=[line width=1, densely dotted, color=blue]
\newcounter{id}
\newcommand{\drawlinedotswithstyle}[4]{
 \def\x{{#3}}
 \def\y{{#4}}
 \tikzstyle{thispathstyle}=[#1]
 \tikzstyle{thisnodestyle}=[#2]
 \setcounter{id}{-1} 
 \foreach \j in {#3}{\stepcounter{id}} 
 \foreach \i in {1,...,\the\value{id}}{  
  \path[thispathstyle] (\x[\i],\y[\i]) --(\x[\i-1],\y[\i-1]); 
 }
 \foreach \i in {1,...,\the\value{id}}{  
  \node[thisnodestyle] at (\x[\i],\y[\i]) {}; 
 }
 \node[thisnodestyle] at (\x[0],\y[0]) {}; 
}
\DeclareDocumentCommand{\drawlinedots}{ O{pathdefault} O{nodedefault} m m}{\drawlinedotswithstyle{#1}{#2}{#3}{#4}}
\let\originalleft\left
\let\originalright\right
\renewcommand{\left}{\mathopen{}\mathclose\bgroup\originalleft}
\renewcommand{\right}{\aftergroup\egroup\originalright}
  \providecommand{\corollaryname}{Corollary}
  \providecommand{\lemmaname}{Lemma}
\providecommand{\theoremname}{Theorem}
\begin{document}
\global\long\def\Des{\operatorname{Des}}

\global\long\def\pk{\operatorname{pk}}

\global\long\def\lpk{\operatorname{lpk}}

\global\long\def\Comp{\operatorname{Comp}}

\global\long\def\br{\operatorname{br}}

\global\long\def\udr{\operatorname{udr}}

\global\long\def\des{\operatorname{des}}

\global\long\def\maj{\operatorname{maj}}

\global\long\def\imaj{\operatorname{imaj}}

\global\long\def\altdes{\operatorname{altdes}}

\global\long\def\st{\operatorname{st}}

\global\long\def\Orb{\operatorname{Orb}}

\global\long\def\inv{\operatorname{inv}}

\global\long\def\val{\operatorname{val}}

\global\long\def\fdes{\operatorname{fdes}}

\global\long\def\Exp{\operatorname{Exp}}

\global\long\def\neg{\operatorname{neg}}

\global\long\def\Av{\operatorname{Av}}

\global\long\def\dasc{\operatorname{dasc}}

\global\long\def\ddes{\operatorname{ddes}}

\global\long\def\hk{\operatorname{hk}}

\global\long\def\nlc{\operatorname{nlc}}

\global\long\def\tc{\operatorname{tc}}

\title{Eulerian polynomials and descent statistics}

\author{Yan Zhuang\\
Department of Mathematics\\
Brandeis University\thanks{MS 050, Waltham, MA 02453}\\
\texttt{\href{mailto:zhuangy@brandeis.edu}{zhuangy@brandeis.edu}}}
\maketitle
\begin{abstract}
We prove several identities expressing polynomials counting permutations
by various descent statistics in terms of Eulerian polynomials, extending
results of Stembridge, Petersen, and Br\"and\'en. Additionally,
we find $q$-exponential generating functions for $q$-analogues of
these descent statistic polynomials that also keep track of the inversion
number or inverse major index. We also present identities relating
several of these descent statistic polynomials to refinements of type
B Eulerian polynomials and flag descent polynomials by the number
of negative letters of a signed permutation. Our methods include permutation
enumeration techniques involving noncommutative symmetric functions,
the modified Foata--Strehl action, and a group action of Petersen
on signed permutations. Notably, the modified Foata--Strehl action
yields an analogous relation between Narayana polynomials and the
joint distribution of the peak number and descent number over 231-avoiding
permutations, which we also interpret in terms of binary trees and
Dyck paths.
\end{abstract}
\textbf{\small{}Keywords:}{\small{} permutations, signed permutations,
Eulerian polynomials, descent statistics, noncommutative symmetric
functions, modified Foata--Strehl action}{\let\thefootnote\relax\footnotetext{2010 \textit{Mathematics Subject Classification}. Primary 05A05; Secondary 05A15, 05E05, 05E18.}}

\tableofcontents{}

\section{Introduction}

Let $\pi=\pi_{1}\pi_{2}\cdots\pi_{n}$ be a permutation in $\mathfrak{S}_{n}$,
the set of permutations of $[n]=\{1,2,\dots,n\}$, which are called
$n$\textit{-permutations}.\footnote{By convention, we take $\mathfrak{S}_{0}$ to consist of only the
empty word.} Also, let $\left|\pi\right|$ be the length of $\pi$---so that
$\left|\pi\right|=n$ whenever $\pi\in\mathfrak{S}_{n}$---and let
$\mathfrak{S}\coloneqq\bigcup_{n=0}^{\infty}\mathfrak{S}_{n}$. We
say that $i\in[n-1]$ is a \textit{descent} of an $n$-permutation
$\pi$ if $\pi_{i}>\pi_{i+1}$. Every permutation can be uniquely
decomposed into a sequence of maximal increasing consecutive subsequences---or
equivalently, maximal consecutive subsequences containing no descents---which
we call \textit{increasing runs}. For example, the descents of $\pi=85712643$
are $1$, $3$, $6$, and $7$, and the increasing runs of $\pi$
are $8$, $57$, $126$, $4$, and $3$.

Let $\des(\pi)$ denote the number of descents of $\pi$. Then it
is clear that the number of increasing runs of $\pi$ is $\des(\pi)+1$
when $\left|\pi\right|\geq1$, i.e., when $\pi$ is nonempty. The
polynomial 
\[
A_{n}(t)\coloneqq\sum_{\pi\in\mathfrak{S}_{n}}t^{\des(\pi)+1}
\]
for $n\geq1$ is called the $n$th \textit{Eulerian polynomial}. We
set $A_{0}(t)=1$ by convention.\footnote{In general, for all polynomials defined in this paper counting permutations
by various statistics---such as $P_{n}^{\pk}(t)$, $P_{n}^{(\pk,\des)}(y,t)$,
etc.---we set the $0$th polynomial to be 1 by convention.} The exponential generating function for Eulerian polynomials is well
known: 
\[
\sum_{n=0}^{\infty}A_{n}(t)\frac{x^{n}}{n!}=\frac{1-t}{1-te^{(1-t)x}}.
\]
The Eulerian polynomials have a rich history and appear in many contexts
in combinatorics; see \cite{Petersen2015} for a detailed exposition.

The Eulerian polynomials are closely related to the distribution of
other descent statistics: permutation statistics that depend only
on the descent set and length of a permutation. Specifically, it is
known that polynomials counting permutations by various descent statistics---the
number of peaks, left peaks, and biruns---can be expressed in terms
of Eulerian polynomials. 
\begin{itemize}
\item We say that $i$ (where $2\leq i\leq n-1$) is a \textit{peak} of
$\pi=\pi_{1}\pi_{2}\cdots\pi_{n}$ if $\pi_{i-1}<\pi_{i}>\pi_{i+1}$,
and let $\pk(\pi)$ be the number of peaks of $\pi$. For example,
the peaks of $\pi=85712643$ are $3$ and $6$, and so $\pk(\pi)=2$.
The peak polynomials 
\[
P_{n}^{\pk}(t)\coloneqq\sum_{\pi\in\mathfrak{S}_{n}}t^{\pk(\pi)+1}
\]
are related to the Eulerian polynomials by the identity 
\begin{equation}
A_{n}(t)=\left(\frac{1+t}{2}\right)^{n+1}P_{n}^{\pk}\left(\frac{4t}{(1+t)^{2}}\right)\label{e-apk}
\end{equation}
for $n\geq1$, which was first stated explicitly by Stembridge \cite{Stembridge1997}
as a result of his theory of enriched $P$-partitions, but also follows
from an earlier construction of Shapiro, Woan, and Getu \cite{Shapiro1983}
which was later rediscovered by Br\"and\'en as the ``modified Foata--Strehl
action'' \cite{Braenden2008}, a variant of a group action on permutations
originally defined by Foata and Strehl \cite{Foata1974}. Making a
substitution yields the equivalent identity 
\[
P_{n}^{\pk}(t)=\left(\frac{2}{1+v}\right)^{n+1}A_{n}(v)
\]
where $v=\frac{2}{t}(1-\sqrt{1-t})-1$.
\item We say that $i\in[n-1]$ is a \textit{left peak} of $\pi\in\mathfrak{S}_{n}$
if either $i$ is a peak, or if $i=1$ and 1 is a descent.\footnote{Equivalently, $i$ is a left peak of $\pi$ if it is a peak of the
permutation $0\pi$ obtained by prepending a letter 0 to $\pi$. We
can also define a ``right peak'' in the analogous way, but it is easy
to see from symmetry that the number of left peaks and the number
of right peaks are equidistributed over $\mathfrak{S}_{n}$.} The number of left peaks of $\pi$ is denoted $\lpk(\pi)$. For example,
the left peaks of $\pi=85712643$ are $1$, $3$ and $6$, and so
$\lpk(\pi)=3$. Let 
\[
P_{n}^{\lpk}(t)\coloneqq\sum_{\pi\in\mathfrak{S}_{n}}t^{\lpk(\pi)}
\]
be the polynomial counting $n$-permutations by left peaks. Using
a modification of enriched $P$-partitions called ``left enriched
$P$-partitions'', Petersen \cite[Observation 3.1.2]{Petersen2006}
proved the identity 
\begin{equation}
\sum_{k=0}^{n}{n \choose k}2^{k}(1-t)^{n-k}A_{k}(t)=(1+t)^{n}P_{n}^{\lpk}\left(\frac{4t}{(1+t)^{2}}\right)\label{e-lpk}
\end{equation}
for all $n$. Equivalently,
\begin{align*}
P_{n}^{\lpk}(t) & =\frac{1}{(1+v)^{n}}\sum_{k=0}^{n}{n \choose k}2^{k}(1-v)^{n-k}A_{k}(v)
\end{align*}
where again $v=\frac{2}{t}(1-\sqrt{1-t})-1$.
\item A \textit{birun}\footnote{Biruns are more commonly called \textit{alternating runs}, but since
the term ``alternating run'' is used for a different concept in the
related papers \cite{Gessel2014,Zhuang2016}, we use the term ``birun''
which was suggested by Stanley \cite[Section 4]{Stanley2008}.} of a permutation is a maximal monotone consecutive subsequence---that
is, an increasing run of length at least two or a decreasing run of
length at least two---and the number of biruns of $\pi$ is denoted
$\br(\pi)$. For example, the biruns of $\pi=85712643$ are $85$,
$57$, $71$, $126$ and $643$, and thus $\br(\pi)=5$. Define 
\[
P_{n}^{\br}(t)\coloneqq\sum_{\pi\in\mathfrak{S}_{n}}t^{\br(\pi)}.
\]
Using differential equations, David and Barton \cite{David1962} proved
the identity 
\[
P_{n}^{\br}(t)=\left(\frac{1+t}{2}\right)^{n-1}(1+v)^{n+1}A_{n}\left(\frac{1-v}{1+v}\right)
\]
for $n\geq1$, where $v=\sqrt{\frac{1-t}{1+t}}$.
\end{itemize}
In this paper, we establish several new identities which similarly
express polynomials counting permutations by certain descent statistics
in terms of Eulerian polynomials, including refinements of the known
results on $\pk$ and $\lpk$ proved by Stembridge and Petersen, respectively.
Futhermore, we find expressions for $q$-exponential generating functions
for $q$-analogues of these descent statistic polynomials that also
keep track of the inversion number (or inverse major index; see Subsection
4.4), although there are no analogous expressions in terms of the
$q$-Eulerian polynomials $A_{n}(q,t)=\sum_{\pi\in\mathfrak{S}_{n}}q^{\inv(\pi)}t^{\des(\pi)+1}$.
In particular, the descent statistics that we consider are the ordered
pairs $(\pk,\des)$ and $(\lpk,\des)$, the number of ``up-down runs''
$\udr$, and the triple $(\lpk,\val,\des)$ where $\val$ is the number
of ``valleys''.\footnote{We shall see that $(\lpk,\val,\des)$ is equivalent to $(\udr,\des)$,
but the former is easier to work with due to technical constraints
surrounding the latter.}

The idea of descents has been extended to other finite Coxeter groups,
the most important of which (from the perspective of permutation enumeration)
are the hyperoctahedral groups $\mathfrak{B}_{n}$, consisting of
``signed permutations''. In particular, two notions of descent number
for signed permutations are the ``type B descent number'' $\des_{B}$
and the ``flag descent number'' $\fdes$, which give rise to analogues
of Eulerian polynomials: type B Eulerian polynomials and flag descent
polynomials. We refine these polynomials to also keep track of the
number of ``negative letters'' $\neg$ of a signed permutation, and
uncover similar identities that relate the distribution of $(\lpk,\des)$
over $\mathfrak{S}_{n}$ to that of $(\neg,\des_{B})$ over $\mathfrak{B}_{n}$---which
specializes to a connection between $\lpk$ and $\des_{B}$ previously
discovered by Petersen---and relate the distribution of $(\lpk,\val,\des)$
over $\mathfrak{S}_{n}$ to that of $(\neg,\fdes)$ over $\mathfrak{B}_{n}$,
which specializes to a previously unknown connection between $\udr$
and $\fdes$.

The methods that we employ are twofold. Our main results are obtained
using general permutation enumeration techniques involving noncommutative
symmetric functions developed by Gessel \cite{gessel-thesis} and
later extended by Gessel and the present author \cite{Gessel2014,Zhuang2016}.
Later, we use two group actions---the modified Foata--Strehl action
and an action of Petersen on signed permutations---to prove generalizations
of some of our basic results for $(\pk,\des)$, $(\lpk,\des)$, and
$(\lpk,\val,\des)$. Notably, the generalized result for $(\pk,\des)$
specializes to a result relating the Narayana polynomials and the
$(\pk,\des)$ polynomials for 231-avoiding permutations, which we
also interpret in terms of binary trees and Dyck paths.

The organization of this paper is as follows. In Section 2, we review
basic definitions from permutation enumeration and some classical
results on counting permutations with a prescribed descent set, establish
some preliminary facts about the aforementioned refinements of type
B Eulerian polynomials and flag descent polynomials, and prove several
identities relating the Eulerian polynomials, refined type B Eulerian
polynomials, and refined flag descent polynomials. In Section 3, we
introduce parts of the theory of noncommutative symmetric functions
relevant to this work. In Section 4, we prove several identities involving
noncommutative symmetric functions and use them to obtain our main
results. Finally, in Section 5, we introduce the group actions of
Br\"and\'en and Petersen and use them to generalize some of our
results from Section 4. 

A summary of every statistic appearing in this paper is given in the
appendix.

\section{Permutations and descents}

\subsection{Descent sets, compositions, and statistics}

We begin by reviewing some basic material from permutation enumeration
relating to descents. Recall that a descent of an $n$-permutation
$\pi$ is an index $i\in[n-1]$ such that $\pi_{i}>\pi_{i+1}$. The
set of descents, or \textit{descent set}, of a permutation $\pi$
is denoted $\Des(\pi)$. We observed that the descents of a permutation
separate it into increasing runs. In fact, the lengths of the increasing
runs of a permutation determine its descents, and vice versa. Sometimes
it is actually more convenient to represent a descent set of an $n$-permutation
with a composition of $n$ which encodes the lengths of its increasing
runs.

Given a subset $S\subseteq[n-1]$ with elements $s_{1}<s_{2}<\cdots<s_{j}$,
let $\Comp(S)$ be the composition $(s_{1},s_{2}-s_{1},\dots,s_{j}-s_{j-1},n-s_{j})$
of $n$, and given a composition $L=(L_{1},L_{2},\dots,L_{k})$, let
$\Des(L)\coloneqq\{L_{1},L_{1}+L_{2},\dots,L_{1}+\cdots+L_{k-1}\}$
be the corresponding subset of $[n-1]$. Then, $\Comp$ and $\Des$
are inverse bijections. If $\pi$ is an $n$-permutation with descent
set $S\subseteq[n-1]$, then we call $\Comp(S)$ the \textit{descent
composition} of $\pi$, which we also denote by $\Comp(\pi)$. Note
that the descent composition of $\pi$ gives the lengths of the increasing
runs of $\pi$. Conversely, if $\pi$ has descent composition $L$,
then its descent set $\Des(\pi)$ is $\Des(L)$.

We partially order compositions of $n$ by reverse refinement, that
is, $L=(L_{1},\dots,L_{k})$ covers $M$ if and only if $M$ can be
obtained from $L$ by replacing two consecutive parts $L_{i}$ and
$L_{i+1}$ with $L_{i}+L_{i+1}$. For example, we have $(7,6)<(1,2,4,5,1)$.
Note that if $L$ and $M$ are descent compositions, then $L\leq M$
under this ordering if and only if $\Des(L)\subseteq\Des(M)$; in
other words, $\Comp$ and $\Des$ are order-preserving bijections.

A permutation statistic $\st$ is called a \textit{descent statistic}
if it depends only on the descent composition, that is, if $\Comp(\pi)=\Comp(\sigma)$
implies $\st(\pi)=\st(\sigma)$ for any two $\pi,\sigma\in\mathfrak{S}$.
Equivalently, $\st$ is a descent statistic if it depends only on
the descent set and length of a permutation. Examples of descent statistics
include all of the statistics discussed in the introduction: the descent
number $\des$, the peak number $\pk$, the left peak number $\lpk$,
and the number of biruns $\br$. Ordered tuples of descent statistics
such as $(\pk,\des)$ and $(\lpk,\des)$ are also descent statistics.

Two further examples of descent statistics are the number of ``valleys''
and the number of ``up-down runs'', defined as follows. We say that
$i$ (where $2\leq i\leq n-1$) is a \textit{valley} of $\pi$ if
$\pi_{i-1}>\pi_{i}<\pi_{i+1}$, and the number of valleys of $\pi$
is denoted $\val(\pi)$. The valleys of $\pi=85712643$ are $2$ and
$4$, so $\val(\pi)=2$.

An \textit{up-down run} of a permutation $\pi=\pi_{1}\pi_{2}\cdots\pi_{n}$
is either a birun of $\pi$ or the letter $\pi_{1}$ if it is an increasing
run of length 1, and the number of up-down runs of $\pi$ is denoted
$\udr(\pi)$. The up-down runs of $\pi=85712643$ are $8$, $85$,
$57$, $71$, $126$, and $643$, so $\udr(\pi)=6$.

The peak number $\pk$ and valley number $\val$---as well as the
ordered pairs $(\pk,\des)$ and $(\val,\des)$---are equidistributed
on $\mathfrak{S}_{n}$ due to symmetry, so we will not consider the
valley number per se. However, the valley number and left peak number
are closely related to the number of up-down runs, which we establish
in the next lemma.
\begin{lem}
\label{l-udr} Let $\pi\in\mathfrak{S}_{n}$ with $n\geq1$. Then:

\begin{enumerate}
\item [\normalfont{(a)}] $\udr(\pi)=\lpk(\pi)+\val(\pi)+1$
\item [\normalfont{(b)}] $\lpk(\pi)=\left\lfloor \udr(\pi)/2\right\rfloor $
\item [\normalfont{(c)}] $\val(\pi)=\left\lfloor (\udr(\pi)-1)/2\right\rfloor $
\item [\normalfont{(d)}] If $n-1$ is a descent of $\pi$, then $\lpk(\pi)=\val(\pi)+1$.
Otherwise, $\lpk(\pi)=\val(\pi)$.
\end{enumerate}
\end{lem}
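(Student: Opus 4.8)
The plan is to reduce all four parts to statements about the single word $\widehat{\pi}:=0\,\pi_{1}\pi_{2}\cdots\pi_{n}$ obtained by prepending a $0$ to $\pi$ (or, if one prefers a genuine permutation, its standardization $1\,(\pi_{1}+1)\cdots(\pi_{n}+1)\in\mathfrak{S}_{n+1}$, which has the same relative order and hence the same relevant statistics), and then to read everything off from the ascent--descent pattern of $\widehat\pi$. First I would record three translations. As noted in the introduction, $\lpk(\pi)=\pk(\widehat\pi)$. Since $0<\pi_{1}$, prepending the $0$ can never create a new valley, so also $\val(\pi)=\val(\widehat\pi)$. And the up-down runs of $\pi$ correspond bijectively to the biruns of $\widehat\pi$: the up-down run $\pi_{1}\cdots\pi_{j}$ containing $\pi_{1}$ becomes the increasing birun $0\pi_{1}\cdots\pi_{j}$ of $\widehat\pi$ (of length $j+1\geq2$), while every other up-down run is already a birun of $\pi$ disjoint from position $1$ and remains a birun of $\widehat\pi$; this accounts exactly for the ``extra'' up-down run contributed when $\pi_{1}$ is an increasing run of length $1$, and gives $\udr(\pi)=\br(\widehat\pi)$. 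Note that $\widehat\pi$ has length $n+1\geq2$ and begins with an ascent.

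For part (a), I would use the classical identity $\br(\sigma)=\pk(\sigma)+\val(\sigma)+1$, valid for any permutation $\sigma$ with $|\sigma|\geq2$. This follows at once by encoding $\sigma=\sigma_{1}\cdots\sigma_{m}$ as its ascent--descent word $w=w_{1}\cdots w_{m-1}\in\{A,D\}^{m-1}$, where $w_{i}=A$ iff $\sigma_{i}<\sigma_{i+1}$: the biruns of $\sigma$ are exactly the maximal constant blocks of $w$, so $\br(\sigma)=1+\#\{\,i:w_{i}\neq w_{i+1}\,\}$, while $w_{i}w_{i+1}=AD$ (resp.\ $DA$) precisely when $i+1$ is a peak (resp.\ a valley) of $\sigma$, so that $\#\{\,i:w_{i}\neq w_{i+1}\,\}=\pk(\sigma)+\val(\sigma)$. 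Applying this with $\sigma=\widehat\pi$ together with the three translations above yields $\udr(\pi)=\lpk(\pi)+\val(\pi)+1$, which is (a).

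For parts (b)--(d), I would refine this count using the fact that $\widehat\pi$ begins with an ascent, so its biruns alternate increasing, decreasing, increasing, $\dots$. Let $b:=\br(\widehat\pi)=\udr(\pi)$. Among the $b-1$ junctions between consecutive biruns, those following an odd-indexed birun are peaks of $\widehat\pi$ and those following an even-indexed birun are valleys, so counting gives $\lpk(\pi)=\pk(\widehat\pi)=\lfloor b/2\rfloor$ and $\val(\pi)=\val(\widehat\pi)=\lfloor(b-1)/2\rfloor$, which are (b) and (c). Consequently $\lpk(\pi)-\val(\pi)=\lfloor b/2\rfloor-\lfloor(b-1)/2\rfloor$ equals $1$ if $b$ is even and $0$ if $b$ is odd. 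Finally, since the biruns alternate starting from an increasing one, the last birun of $\widehat\pi$ is increasing iff $b$ is odd; that last birun contains $\pi_{n-1}\pi_{n}$, so it is increasing iff $\pi_{n-1}<\pi_{n}$, i.e.\ iff $n-1\notin\Des(\pi)$. Thus $b$ is even iff $n-1\in\Des(\pi)$, which is exactly (d).

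I do not anticipate any real obstacle: the whole lemma is elementary bookkeeping on the ascent--descent word of $\widehat\pi$. The only point requiring a little care is the degenerate case $n=1$, where $\widehat\pi=01$ is a single increasing birun, so $b=\udr(\pi)=1$ and $\lpk(\pi)=\val(\pi)=0$, consistent with (a)--(d) (and with the ``otherwise'' clause of (d), as $0\notin\Des(\pi)$); and one should state the up-down-run/birun correspondence cleanly enough that $\udr(\pi)=\br(\widehat\pi)$ is beyond doubt. Alternatively, the lemma admits a short proof by induction on $n$, appending the new largest letter and tracking how $\udr$, $\lpk$, $\val$, and the descent status of $n-1$ change, but the $\widehat\pi$ argument is cleaner.
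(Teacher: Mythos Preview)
Your proof is correct, and the underlying combinatorial observation---that the monotone runs alternate between ending at (left) peaks and valleys---is the same one the paper uses. The packaging is slightly different: you prepend $0$ to $\pi$ so as to convert up-down runs into biruns and left peaks into ordinary peaks, then invoke the classical identity $\br(\sigma)=\pk(\sigma)+\val(\sigma)+1$ on $\widehat\pi=0\pi$, whereas the paper argues directly on the up-down runs of $\pi$ (each non-final up-down run ends at a left peak or a valley, alternately, beginning with a left peak, which immediately gives (a); then (d) comes from whether the last up-down run is increasing or decreasing, and (b)--(c) follow from (a) and (d)). Your reduction buys a cleaner argument by eliminating the special ``$\pi_1$ as a length-$1$ increasing run'' clause in the definition of $\udr$, at the cost of verifying the three translation identities $\lpk(\pi)=\pk(\widehat\pi)$, $\val(\pi)=\val(\widehat\pi)$, $\udr(\pi)=\br(\widehat\pi)$ up front; the paper's direct approach is terser but leaves (b) and (c) as a deduction from (a) and (d) rather than reading them off from the alternation as you do.
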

\begin{proof}
Every up-down run except the final one ends with either a left peak
or a valley, and in fact these up-down runs alternate between ending
with a left peak and ending with a valley, beginning with a left peak.
For example, if $\udr(\pi)=5$, then the first up-down run ends with
a left peak, the second ends with a valley, the third ends with a
left peak, and the fourth ends with a valley. It is clear that this
accounts for every left peak and every valley, which proves (a). Now,
note that either $\lpk(\pi)=\val(\pi)+1$ or $\lpk(\pi)=\val(\pi)$;
this depends completely on whether the penultimate up-down run ends
with a left peak or a valley, which is determined by whether the final
up-down run is increasing or decreasing (i.e., whether the final run
is long or short); this proves (d). Finally, (b) and (c) follow from
(a) and (d).
\end{proof}
Lemma \ref{l-udr} shows that not only does $(\lpk,\val)$ determines
$\udr$, but $\udr$ also determines $(\lpk,\val)$. In other words,
$\udr$ and $(\lpk,\val)$ are equivalent permutation statistics.

An example of a permutation statistic that is not a descent statistic
is the ``inversion number''. An \textit{inversion} in an $n$-permutation
is a pair of indices $(i,j)$ with $1\leq i<j\leq n$ such that $\pi_{i}>\pi_{j}$.
Then the number of inversions of $\pi$ is denoted $\inv(\pi)$. For
example, the inversions of $\pi=1432$ are $(4,3)$, $(4,2)$, and
$(3,2)$, so $\inv(\pi)=3$. It is well known that the polynomial
counting $n$-permutations by inversion number is given by the $n$th
$q$-factorial 
\[
[n]_{q}!\coloneqq(1+q)(1+q+q^{2})\cdots(1+q+\cdots+q^{n-1}),
\]
i.e., 
\[
\sum_{\pi\in\mathfrak{S}_{n}}q^{\inv(\pi)}=[n]_{q}!.
\]

We give two key remarks before continuing. First, the definitions
and properties of descents, increasing runs, descent compositions,
and descent statistics extend naturally to words on any totally ordered
alphabet such as $[n]$ or $\mathbb{P}$ (the positive integers) if
we replace the strict inequality $<$ with the weak inequality $\leq$,
which reflects the fact that increasing runs are allowed to be weakly
increasing in this setting. For example, $i$ is a peak of the word
$w=w_{1}w_{2}\cdots w_{n}$ if $w_{i-1}\leq w_{i}>w_{i+1}$. 

Finally, recall that by definition, two permutations (or words) with
the same descent composition must have the same value of $\st$ if
$\st$ is a descent statistic. Hence, we shall use the notation $\st(L)$
to indicate the value of a descent statistic $\st$ on any permutation
(or word) with descent composition $L$.

\subsection{Counting permutations with a prescribed descent set}

If $L=(L_{1},\dots,L_{k})$ is a composition of $n$, then we let
$l(L)$ denote the number of parts of $L$ and we write ${n \choose L}$
for the multinomial coefficient ${n \choose L_{1},\dots,L_{k}}$.
Similarly, we write ${n \choose L}_{\!q}$ for the $q$-multinomial
coefficient 
\[
{n \choose L_{1},\dots,L_{k}}_{\!\!q}\coloneqq\frac{[n]_{q}!}{[L_{1}]_{q}!\,[L_{2}]_{q}!\cdots[L_{k}]_{q}!}.
\]

\begin{lem}
\label{l-Descont} Let $L$ be a composition of $n$. Then:

\begin{enumerate}
\item [\normalfont{(a)}] The number of $n$-permutations with descent composition
$K\leq L$---or equivalently, with descent set contained in $\Des(L)$---is
the multinomial coefficient ${n \choose L}$.
\item [\normalfont{(b)}] The polynomial counting $n$-permutations with
descent composition $K\leq L$---or equivalently, with descent set
contained in $\Des(L)$---by inversion number is the $q$-multinomial
coefficient ${n \choose L}_{\!q}$. That is, 
\[
\sum_{\substack{\pi\in\mathfrak{S}_{n}\\
\Comp(\pi)\leq L
}
}q^{\inv(\pi)}={n \choose L}_{\!\!q}.
\]
\end{enumerate}
\end{lem}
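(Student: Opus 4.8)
The plan is to prove both parts at once by giving an explicit description of the permutations being counted and then reading off the inversion statistic. First I would introduce the consecutive blocks of positions determined by $L$: write $P_{1}=\{1,\dots,L_{1}\}$, $P_{2}=\{L_{1}+1,\dots,L_{1}+L_{2}\}$, and so on. Then a permutation $\pi\in\mathfrak{S}_{n}$ satisfies $\Comp(\pi)\leq L$ (equivalently $\Des(\pi)\subseteq\Des(L)$) precisely when $\pi$ has no descent at any position interior to a block, i.e. when $\pi$ is increasing along each block $P_{j}$. Such a $\pi$ is then completely determined by the ordered set partition $(B_{1},\dots,B_{k})$ of $[n]$ with $|B_{j}|=L_{j}$, where $B_{j}$ is the set of values occupying the positions of $P_{j}$: on $P_{j}$, $\pi$ must list the elements of $B_{j}$ in increasing order. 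Conversely, every such ordered set partition arises from exactly one permutation with $\Comp(\pi)\leq L$. Since the number of ordered set partitions of $[n]$ with block sizes $L_{1},\dots,L_{k}$ is ${n \choose L}$, this proves part (a).

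For part (b) I would compute $\inv(\pi)$ in terms of $(B_{1},\dots,B_{k})$. Because $\pi$ is increasing on each $P_{j}$, no inversion of $\pi$ has both indices in a single block; and for $a<b$, since all positions of $P_{a}$ precede all positions of $P_{b}$, the inversions $(i,j)$ with $i\in P_{a}$ and $j\in P_{b}$ correspond bijectively to the pairs $(x,y)$ with $x\in B_{a}$, $y\in B_{b}$, and $x>y$. Hence $\inv(\pi)=\sum_{a<b}\#\{(x,y):x\in B_{a},\,y\in B_{b},\,x>y\}$, which is exactly $\inv(w)$ for the word $w=w_{1}\cdots w_{n}$ on $\{1,\dots,k\}$ defined by $w_{m}=j$ when $m\in B_{j}$. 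As $(B_{1},\dots,B_{k})$ ranges over all ordered set partitions with block sizes $L_{j}$, the word $w$ ranges over all rearrangements of the multiset $\{1^{L_{1}},\dots,k^{L_{k}}\}$, so
\[
\sum_{\substack{\pi\in\mathfrak{S}_{n}\\\Comp(\pi)\leq L}}q^{\inv(\pi)}=\sum_{w}q^{\inv(w)}={n \choose L}_{\!q},
\]
the last equality being the classical (MacMahon) evaluation of the inversion generating function over permutations of a multiset. If a self-contained argument is preferred, this identity can be proved by induction on $k$ using the factorization ${n \choose L_{1},\dots,L_{k}}_{\!q}={n \choose L_{1}}_{\!q}{n-L_{1} \choose L_{2},\dots,L_{k}}_{\!q}$: choosing $B_{1}$ first contributes $q^{\#\{(x,y):x\in B_{1},\,y\notin B_{1},\,x>y\}}$, whose sum over all $L_{1}$-subsets $B_{1}\subseteq[n]$ is the familiar expansion of ${n \choose L_{1}}_{\!q}$ as a sum over partitions fitting in an $L_{1}\times(n-L_{1})$ box, while the inversions among $B_{2},\dots,B_{k}$ depend only on the order type of $[n]\setminus B_{1}$ and hence contribute ${n-L_{1} \choose L_{2},\dots,L_{k}}_{\!q}$ by the inductive hypothesis; the base case $k=1$ is immediate. (Setting $q=1$ recovers part (a) as well.)

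The argument is essentially routine; the only point requiring care is the inversion bookkeeping in part (b) — verifying that within-block pairs contribute nothing to $\inv(\pi)$ and that each cross-block value pair $(x,y)$ with $x>y$ is counted exactly once — together with checking that, after passing from $[n]\setminus B_{1}$ to its order-isomorphic copy $[n-L_{1}]$, the residual inversion count really does reduce to the corresponding count for the composition $(L_{2},\dots,L_{k})$ of $n-L_{1}$.
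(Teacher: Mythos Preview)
Your argument is correct: the bijection between permutations with $\Des(\pi)\subseteq\Des(L)$ and ordered set partitions of $[n]$ with block sizes $L_{1},\dots,L_{k}$ is exactly right, and your inversion bookkeeping in part (b) --- mapping $\pi$ to the multiset word $w$ and invoking MacMahon's evaluation --- is sound (one small check worth making explicit is that the pairs $(x,y)$ with $x\in B_{a}$, $y\in B_{b}$, $a<b$, $x>y$ really are in bijection with the inversions of $w$, which you have implicitly).

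As for comparison with the paper: the paper does not actually prove this lemma, but simply cites \cite[Examples 2.2.4 and 2.2.5]{Stanley2011}. Your write-up is essentially the standard argument one finds in that reference (the ordered set partition bijection for (a), and the reduction to the inversion generating function of a multiset permutation for (b)), so you are supplying the proof the paper outsources rather than taking a different route.
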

See \cite[Examples 2.2.4 and 2.2.5]{Stanley2011} for proofs. This
result on counting $n$-permutations with a descent set contained
in a prescribed set can then be used to count those with a prescribed
descent set.
\begin{lem}
\label{l-Despre} Let $L$ be a composition of $n$. Then:

\begin{enumerate}
\item [\normalfont{(a)}] The number $\beta(L)$ of $n$-permutations with
descent composition $L$---or equivalently, with descent set $\Des(L)$---is
given by the formula 
\begin{equation}
\beta(L)=\sum_{K\leq L}(-1)^{l(L)-l(K)}{n \choose K}.\label{e-betaL}
\end{equation}
\item [\normalfont{(b)}] The polynomial 
\[
\beta_{q}(L)\coloneqq\sum_{\substack{\pi\in\mathfrak{S}_{n}\\
\Comp(\pi)=L
}
}q^{\inv(\pi)}
\]
counting $n$-permutations with descent composition $L$---or equivalently,
with descent set $\Des(L)$---by inversion number is given by the
formula 
\begin{equation}
\beta_{q}(L)=\sum_{K\leq L}(-1)^{l(L)-l(K)}{n \choose K}_{\!\!q}.\label{e-betaqL}
\end{equation}
\end{enumerate}
\end{lem}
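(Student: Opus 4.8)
The plan is to derive both identities by Möbius inversion over the poset $\mathcal{C}_n$ of compositions of $n$ ordered by reverse refinement, taking Lemma~\ref{l-Descont} as the summed side of the inversion. The first step is to identify the Möbius function of $\mathcal{C}_n$. As recalled in Subsection~2.1, $\Comp$ and $\Des$ are mutually inverse order-preserving bijections between $\mathcal{C}_n$ and the Boolean lattice of subsets of $[n-1]$ ordered by inclusion; hence $\mathcal{C}_n \cong B_{n-1}$ as posets. For $K \le L$ we have $\Des(K) \subseteq \Des(L)$ and $|\Des(L)| - |\Des(K)| = (l(L)-1) - (l(K)-1) = l(L) - l(K)$, so transporting the Möbius function of the Boolean lattice gives $\mu_{\mathcal{C}_n}(K,L) = (-1)^{l(L)-l(K)}$.

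Next I would set up the summed relations. For any composition $L$ of $n$, the permutations $\pi \in \mathfrak{S}_n$ with $\Comp(\pi) \le L$ are partitioned according to their descent composition $K$, which ranges over all $K \le L$; thus Lemma~\ref{l-Descont}(a) reads $\sum_{K \le L} \beta(K) = \binom{n}{L}$, and Lemma~\ref{l-Descont}(b), obtained by instead summing $q^{\inv(\pi)}$ over the same set, reads $\sum_{K \le L} \beta_q(K) = \binom{n}{L}_{\!q}$. Möbius inversion on $\mathcal{C}_n$ then gives $\beta(L) = \sum_{K \le L} (-1)^{l(L)-l(K)} \binom{n}{K}$, which is \eqref{e-betaL}, and likewise $\beta_q(L) = \sum_{K \le L} (-1)^{l(L)-l(K)} \binom{n}{K}_{\!q}$, which is \eqref{e-betaqL}; note that the Möbius function does not depend on $q$, so exactly the same inversion applies in both cases.

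The argument is essentially bookkeeping, and the only point that warrants care is the determination of $\mu_{\mathcal{C}_n}$---specifically, checking that reverse refinement of compositions of $n$ really is order-isomorphic to $B_{n-1}$ and that the sign exponent is $l(L)-l(K)$. One could sidestep the language of Möbius functions entirely and argue by direct inclusion--exclusion over the subsets $T \subseteq \Des(L)$: express the number (resp.\ the inversion generating polynomial) of permutations with descent set contained in $T$, which Lemma~\ref{l-Descont} evaluates as a multinomial (resp.\ $q$-multinomial) coefficient, and take the alternating sum $\sum_{T \subseteq \Des(L)} (-1)^{|\Des(L)| - |T|}(\cdots)$ to isolate the permutations with descent set exactly $\Des(L)$. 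This is the same computation phrased elementarily, and it is the standard way Lemma~\ref{l-Descont} is leveraged. I do not anticipate any genuine obstacle beyond this routine care.
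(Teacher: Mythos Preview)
Your proposal is correct and takes essentially the same approach as the paper, which simply states that the lemma is immediate from Lemma~\ref{l-Descont} and the inclusion--exclusion principle. Your use of M\"obius inversion over $\mathcal{C}_n \cong B_{n-1}$ is just a careful unpacking of that one-line justification, and you even note the equivalence yourself.
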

The proof of Lemma \ref{l-Despre} is immediate from Lemma \ref{l-Descont}
and the inclusion-exclusion principle. Part (a) of Lemmas \ref{l-Descont}
and \ref{l-Despre} were originally due to MacMahon \cite{macmahon},
whereas part (b) of these lemmas were due to Stanley \cite{Stanley1976}.

\subsection{Type B permutations, descents, and Eulerian polynomials}

Let $\mathfrak{B}_{n}$ be the set of permutations $\pi=\pi_{-n}\cdots\pi_{-1}\pi_{0}\pi_{1}\cdots\pi_{n}$
of $\{-n,\dots,-1,0,1,\dots,n\}$ satisfying $\pi_{-i}=-\pi_{i}$
for all $-n\leq i\leq n$; we call these \textit{signed $n$-permutations}
(or \textit{type B $n$-permutations}). Let $\mathfrak{\mathfrak{B}}\coloneqq\bigcup_{n=0}^{\infty}\mathfrak{B}_{n}$.
For any signed $n$-permutation $\pi$, we must have $\pi_{0}=0$
and $\pi$ is completely determined by $\{\pi_{1},\dots,\pi_{n}\}$,
so we can write $\pi$ as $\pi=\pi_{1}\cdots\pi_{n}$ with the understanding
that $\pi_{0}=0$ and $\pi_{-i}=-\pi_{i}$ for all $i$. In this way,
we can think of $\mathfrak{S}_{n}$ as the subset of signed permutations
in $\mathfrak{B}_{n}$ with no negative letters among $\{\pi_{1},\dots,\pi_{n}\}$.

For cleaner notation, let us write $\bar{i}$ rather than $-i$ when
writing out the letters of a signed permutation. For example, if $\pi=\pi_{1}\pi_{2}\pi_{3}$
with $\pi_{1}=3$, $\pi_{2}=-2$, and $\pi_{3}=-1$, then we write
$\pi=3\bar{2}\bar{1}$.

We say that $i\in\{0\}\cup[n-1]$ is a \textit{descent} (or \textit{type
B descent}) of $\pi\in\mathfrak{B}_{n}$ if $\pi_{i}>\pi_{i+1}$.
Note that we allow 0 to be a descent, which happens precisely when
$\pi_{1}$ is negative. There are two notions of descent number for
signed permutations that we consider. The \textit{descent number}
(or \textit{type B descent number}) $\des_{B}(\pi)$ is simply the
number of descents of $\pi\in\mathfrak{B}_{n}$, whereas the \textit{flag
descent number} $\fdes(\pi)$ is defined by 
\[
\fdes(\pi)\coloneqq\begin{cases}
2\des_{B}(\pi), & \mbox{if }\pi_{1}>0\\
2\des_{B}(\pi)-1, & \mbox{if }\pi_{1}<0;
\end{cases}
\]
that is, every descent except 0 is counted twice. For example, let
$\pi=\bar{4}72\bar{6}\bar{3}51$. Then the descents of $\pi$ are
0, 2, 3, and 6, so $\des_{B}(\pi)=4$ and $\fdes(\pi)=7$.

We define 
\[
B_{n}(t)\coloneqq\sum_{\pi\in\mathfrak{B}_{n}}t^{\des_{B}(\pi)}
\]
and 
\[
F_{n}(t)\coloneqq\sum_{\pi\in\mathfrak{B}_{n}}t^{\fdes(\pi)},
\]
which are type B analogues of Eulerian polynomials using the descent
number and flag descent number, respectively. We call $B_{n}(t)$
the $n$th \textit{type B Eulerian polynomial} and $F_{n}(t)$ the
$n$th \textit{flag descent polynomial}.

The exponential generating function 
\begin{equation}
\sum_{n=0}^{\infty}\frac{B_{n}(t)}{(1-t)^{n+1}}\frac{x^{n}}{n!}=\frac{e^{x}}{1-te^{2x}}\label{e-bn}
\end{equation}
was found by Steingr\'imsson \cite{Steingrimsson1994}, and the analogous
formula for the flag descent polynomials 
\begin{equation}
\sum_{n=0}^{\infty}\frac{F_{n}(t)}{(1-t)(1-t^{2})^{n}}\frac{x^{n}}{n!}=\frac{e^{x}}{1-te^{x}}\label{e-fn}
\end{equation}
directly follows from a result of Adin, Brenti, and Roichman \cite[Theorem 4.2]{Adin2001}.

We consider another statistic on $\mathfrak{B}$: the number of negative
letters 
\[
\neg(\pi)\coloneqq\#\{\pi_{i}\mid\pi_{i}<0\mbox{ and }i\in[n]\}.
\]
So given $\pi=\bar{4}72\bar{6}\bar{3}51$, we have $\neg(\pi)=3$.
We refine the polynomials $B_{n}(t)$ and $F_{n}(t)$ by this statistic,
defining 
\[
B_{n}(y,t)\coloneqq\sum_{\pi\in\mathfrak{B}_{n}}y^{\neg(\pi)}t^{\des_{B}(\pi)}
\]
and 
\[
F_{n}(y,t)\coloneqq\sum_{\pi\in\mathfrak{B}_{n}}y^{\neg(\pi)}t^{\fdes(\pi)}.
\]
Later, we will relate the polynomials $B_{n}(y,t)$ to the joint distribution
of $\lpk$ and $\des$ over $\mathfrak{S}_{n}$, and similarly $F_{n}(y,t)$
with the joint distribution of $\lpk$, $\val$, and $\des$. In doing
so, we shall need the following exponential generating functions for
these polynomials. 
\begin{thm}
\label{t-bnegf} 
\[
\sum_{n=0}^{\infty}\frac{B_{n}(y,t)}{(1-t)^{n+1}}\frac{x^{n}}{n!}=\frac{e^{x}}{1-te^{(1+y)x}}
\]
\end{thm}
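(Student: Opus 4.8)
The plan is to expand the right-hand side as a geometric series, reducing the claim to a ``type $B$ Worpitzky'' identity refined by $\neg$, which I would then prove by standardizing functions into a signed alphabet. First, from
\[
\frac{e^{x}}{1-te^{(1+y)x}}=e^{x}\sum_{k\geq0}t^{k}e^{(1+y)kx}=\sum_{k\geq0}t^{k}e^{(1+(1+y)k)x},
\]
extracting the coefficient of $x^{n}/n!$ shows that the asserted generating-function identity is equivalent to the statement that
\[
\sum_{k\geq0}(1+(1+y)k)^{n}\,t^{k}=\frac{B_{n}(y,t)}{(1-t)^{n+1}}\qquad\text{for every }n\geq0.
\]

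The crux is then the combinatorial identity
\[
(1+(1+y)k)^{n}=\sum_{\pi\in\mathfrak{B}_{n}}y^{\neg(\pi)}\binom{k+n-\des_{B}(\pi)}{n}.
\]
Its left side equals $\sum_{f}y^{\neg(f)}$, the sum being over all functions $f\colon[n]\to\{-k,\dots,-1,0,1,\dots,k\}$ with $\neg(f)\coloneqq\#\{i:f(i)<0\}$, since each coordinate contributes $1$ for a value in $\{0,1,\dots,k\}$ and $y$ for a negative value, hence $1+k+ky$ in all. I would group these functions according to their \emph{type $B$ standardization}: order the arguments $1,\dots,n$ by their $f$-values, using the total order $0\prec\bar{1}\prec1\prec\bar{2}\prec2\prec\cdots\prec\bar{k}\prec k$ on the alphabet and a sign-dependent rule to break ties among arguments sharing a value, exactly as in the theory of type $B$ $P$-partitions, with the convention $\pi_{0}=0$ (and a matching $c_{0}=0$) included in the bookkeeping. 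This turns $f$ into a signed permutation $\pi\in\mathfrak{B}_{n}$ together with the sequence $c=(c_{1},\dots,c_{n})$ of absolute values of the sorted $f$-values; one checks that $0\leq c_{1}\leq\cdots\leq c_{n}\leq k$, that $c$ increases strictly across every descent of $\pi$ and has $c_{1}\geq1$ whenever $0\in\Des(\pi)$, and that $f\mapsto(\pi,c)$ is a bijection onto the set of all such pairs. For fixed $\pi$, subtracting off the forced jumps shows the number of admissible $c$ equals $\binom{k+n-\des_{B}(\pi)}{n}$, and the negative values of $f$ are exactly the negative letters of $\pi$, so every such $f$ carries weight $y^{\neg(\pi)}$; summing over $\pi$ gives the identity.

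Finally I would sum over $k\geq0$: since $\des_{B}(\pi)\leq n$ for all $\pi\in\mathfrak{B}_{n}$ and $\sum_{k\geq0}\binom{k+n-d}{n}t^{k}=t^{d}/(1-t)^{n+1}$ whenever $0\leq d\leq n$, this yields
\[
\sum_{k\geq0}(1+(1+y)k)^{n}\,t^{k}=\sum_{\pi\in\mathfrak{B}_{n}}y^{\neg(\pi)}\frac{t^{\des_{B}(\pi)}}{(1-t)^{n+1}}=\frac{B_{n}(y,t)}{(1-t)^{n+1}},
\]
which is precisely the reduction from the first step, so the theorem follows.

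I expect the standardization step to be the main obstacle: one must fix the order on $\{-k,\dots,k\}$, the sign-dependent tie-break, and the treatment of the $0$th position precisely enough that (i) $f\mapsto(\pi,c)$ is genuinely a bijection, (ii) the negative entries of $f$ match the negative letters of $\pi$, and (iii) the admissible $c$ for a fixed $\pi$ number exactly $\binom{k+n-\des_{B}(\pi)}{n}$. This is the type $B$ analogue of the MacMahon--Worpitzky construction behind the classical Eulerian exponential generating function, and the rest is routine. (An alternative that sidesteps the bijection: factor $e^{x}/(1-te^{(1+y)x})=e^{x}G((1+y)x)$ with $G(x)=1/(1-te^{x})=\sum_{n\geq0}A_{n}(t)(1-t)^{-n-1}x^{n}/n!$, a restatement of the known Eulerian generating function; comparing coefficients reduces the theorem to the finite identity $B_{n}(y,t)=\sum_{k=0}^{n}\binom{n}{k}(1+y)^{k}(1-t)^{n-k}A_{k}(t)$, which refines the known expansion of $B_{n}(t)$ in Eulerian polynomials.)
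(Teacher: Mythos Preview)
Your proposal is correct and follows essentially the same route as the paper: both reduce the theorem to the identity $\dfrac{B_{n}(y,t)}{(1-t)^{n+1}}=\sum_{k\ge 0}(ky+(k+1))^{n}t^{k}$ and prove it by interpreting $(ky+(k+1))^{n}$ as a weighted count of maps $[n]\to\{-k,\dots,k\}$ and matching these with signed permutations. The only cosmetic difference is packaging: the paper phrases the bijection as ``bars and boxes'' (insert $k$ bars, drop each letter into a box, order each box increasingly), whereas you phrase it as type~$B$ standardization and count the compatible sequences for a fixed $\pi$ by the binomial coefficient $\binom{k+n-\des_{B}(\pi)}{n}$ before summing in $t$; these are two descriptions of the same map, so your anticipated obstacle dissolves once you observe that the paper's box index plays the role of $|f|$ and the within-box ordering is the tie-break. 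One caution on your alternative: the finite identity $B_{n}(y,t)=\sum_{k}\binom{n}{k}(1+y)^{k}(1-t)^{n-k}A_{k}(t)$ is, in this paper, \emph{derived from} the theorem you are proving, so invoking it would be circular here unless you supply an independent proof.
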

Note that setting $y=1$ yields Steingr\'imsson's formula (\ref{e-bn}).
\begin{proof}
We begin by proving the identity 
\begin{equation}
\frac{B_{n}(y,t)}{(1-t)^{n+1}}=\sum_{k=0}^{\infty}(ky+(k+1))^{n}t^{k},\label{e-bnbars}
\end{equation}
which was first established by Petersen \cite{Petersen2007} using
a different method.

Consider the left-hand side. Each term in $B_{n}(y,t)$ corresponds
to a signed $n$-permutation with a vertical bar inserted after each
letter corresponding to a descent (and an initial bar if 0 is a descent).
For example, if we have $\pi=\bar{4}72\bar{6}\bar{3}51$, then we
write this as 
\[
|\bar{4}7|2|\bar{6}\bar{3}5|1.
\]
The $1/(1-t)^{n+1}$ factor corresponds to inserting any number of
bars in any of the $n+1$ positions between letters, before the first
letter, or after the final letter. So for example, continuing from
above, we may have 
\[
|\bar{4}7|2||\bar{6}\bar{3}|||5|1|.
\]
Thus the left-hand side of (\ref{e-bnbars}) counts the number of
signed $n$-permutations with any number of bars inserted in any of
the $n+1$ positions and at least one bar in every position corresponding
to a descent, where $y$ is weighting the number of negative letters
and $t$ is weighting the number of bars.

We claim that the right-hand side counts these same barred signed
$n$-permutations. Fix $k\geq0$; this is the number of bars. The
bars create $k+1$ ``boxes'' for inserting letters. For every $i\in[n-1]$,
we make two choices: whether or not to make it negative, and which
box to put it into. The letters in each box are then placed in increasing
order. Note that the first box cannot contain any negative letters;
otherwise, 0 would be a descent, but there would not be a bar preceding
the first letter. Thus, if a letter is made negative, then it contributes
a weight of $y$ and we can place it in any of the $k$ boxes after
the first one. If a letter remains positive, then it can be placed
into any of the $k+1$ boxes. Since there are $n$ letters and the
choices are made independently, we have a total contribution of $(ky+(k+1))^{n}t^{k}$
in the case where there are $k$ bars in total. Summing over all $k$
yields the right-hand side of (\ref{e-bnbars}).

Now, observe that 
\begin{align*}
\sum_{n=0}^{\infty}\frac{B_{n}(y,t)}{(1-t)^{n+1}}\frac{x^{n}}{n!} & =\sum_{n=0}^{\infty}\sum_{k=0}^{\infty}(ky+(k+1))^{n}t^{k}\frac{x^{n}}{n!}\\
 & =\sum_{k=0}^{\infty}e^{(ky+(k+1))x}t^{k}\\
 & =e^{x}\sum_{k=0}^{\infty}(e^{(1+y)x})^{k}t^{k}\\
 & =\frac{e^{x}}{1-te^{(1+y)x}},
\end{align*}
thus completing the proof.
\end{proof}
\begin{thm}
\label{t-fnegf} 
\[
\sum_{n=0}^{\infty}\frac{F_{n}(y,t)}{(1-t)(1-t^{2})^{n}}\frac{x^{n}}{n!}=\frac{e^{x}+te^{(1+y)x}}{1-t^{2}e^{(1+y)x}}
\]
\end{thm}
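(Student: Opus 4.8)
The plan is to mirror the proof of Theorem~\ref{t-bnegf}, establishing first a ``barred signed permutation'' identity and then summing over $n$.  Concretely, I would first prove
\begin{equation*}
\frac{F_n(y,t)}{(1-t)(1-t^2)^n}=\sum_{k=0}^{\infty}c_k(y)^n\,t^k,
\end{equation*}
for an appropriate sequence of linear polynomials $c_k(y)$ in $y$, and then pass to exponential generating functions exactly as in the previous proof.  The combinatorial model is the same: a term of $F_n(y,t)$ is a signed $n$-permutation with a bar after each letter at a descent and an initial bar when $0$ is a descent, \emph{but} with $t$ weighting $\fdes$ rather than $\des_B$, so each non-initial descent bar must be counted with weight $t^2$ and the initial bar (if present) with weight $t$.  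The factor $1/(1-t^2)^n$ then corresponds to freely inserting pairs of bars into the $n$ internal-or-final positions, and the factor $1/(1-t)$ to inserting single bars only in the very first position; equivalently, after fixing the total ``flag bar weight'' to be $t^k$, one is distributing letters into boxes created by a certain number of bars where the first position behaves differently from the rest.

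The key step is the bookkeeping that identifies $c_k(y)$.  Writing $k$ for the exponent of $t$ (the flag descent weight), I expect the bars to come in two types: single bars allowed only before the first letter, contributing $t$ each, and double bars allowed in any of the $n+1$ positions, contributing $t^2$ each --- though on reflection it is cleaner to think of the $1/(1-t^2)^n$ as $n$ independent geometric series (one per internal/final gap) in the variable $t^2$, and the $1/(1-t)$ as a single geometric series in $t$ for the initial gap.  Fixing the total power of $t$ to be $k$ splits as: either $k=2j$ with $j$ double bars and no initial single bar, giving $j+1$ boxes, the first of which forbids negative letters (so each letter contributes $j y + (j+1)$), or $k=2j+1$ with $j$ double bars and one initial single bar, giving $j+1$ boxes \emph{all} of which may now contain negative letters since $0$ is already a descent (so each letter contributes $(j+1)y+(j+1)=(j+1)(1+y)$).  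Summing, $\sum_k c_k(y)^n t^k=\sum_{j\ge 0}(jy+j+1)^n t^{2j}+\sum_{j\ge 0}(j+1)^n(1+y)^n t^{2j+1}$.

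Granting this, the generating function computation is routine:
\begin{align*}
\sum_{n=0}^{\infty}\frac{F_n(y,t)}{(1-t)(1-t^2)^n}\frac{x^n}{n!}
&=\sum_{j=0}^{\infty}e^{(jy+j+1)x}t^{2j}+\sum_{j=0}^{\infty}e^{(j+1)(1+y)x}(1+y)^n\ \text{--- handled termwise ---}\\
&=\sum_{j=0}^{\infty}e^{(j(1+y)+1)x}t^{2j}+t\sum_{j=0}^{\infty}e^{(j+1)(1+y)x}t^{2j}\\
&=e^{x}\sum_{j=0}^{\infty}\big(t^2e^{(1+y)x}\big)^{j}+te^{(1+y)x}\sum_{j=0}^{\infty}\big(t^2e^{(1+y)x}\big)^{j}\\
&=\frac{e^{x}+te^{(1+y)x}}{1-t^2e^{(1+y)x}},
\end{align*}
which is the claimed formula; setting $y=1$ recovers (\ref{e-fn}) as a sanity check.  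The main obstacle is getting the $c_k(y)$ exactly right --- in particular correctly handling the parity of $k$, the asymmetry between the initial gap and the others under $\fdes$, and the fact that an initial single bar \emph{licenses} negative letters in the first box rather than forbidding them.  I would double-check the small cases $n=1$ (where $F_1(y,t)=1+yt$, or $1+yt$ depending on convention) and $n=2$ against the right-hand side before trusting the general argument.
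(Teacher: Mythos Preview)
Your proposal is correct and follows essentially the same approach as the paper: you establish the identical intermediate identity
\[
\frac{F_n(y,t)}{(1-t)(1-t^2)^n}=\sum_{j\ge 0}(jy+j+1)^n t^{2j}+\sum_{j\ge 0}\big((j+1)(1+y)\big)^n t^{2j+1}
\]
via a barred-signed-permutation argument (the paper frames the same count using the symmetric representation $\pi_{-n}\cdots\pi_{-1}\pi_1\cdots\pi_n$ with mirrored bars, whereas you work directly with $\pi_1\cdots\pi_n$ and asymmetric bar weights, but the bookkeeping and the resulting formula are the same), and then carry out the identical generating-function computation. The stray ``$(1+y)^n$'' in your first displayed line is a slip---your corrected second line is exactly right.
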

Setting $y=1$ yields the formula (\ref{e-fn}) of Adin, Brenti, and
Roichman.
\begin{proof}
We first prove the identity 
\begin{equation}
\frac{F_{n}(y,t)}{(1-t)(1-t^{2})^{n}}=\sum_{k=0}^{\infty}(ky+(k+1))^{n}t^{2k}+\sum_{k=0}^{\infty}((k+1)(y+1))^{n}t^{2k+1}.\label{e-fnbars}
\end{equation}
Each term in $F_{n}(y,t)$ corresponds to a signed $n$-permutation
with an arrangement of bars, but now each nonzero descent contributes
a weight of $t^{2}$, which corresponds to two bars. We write each
$\pi\in\mathfrak{B}_{n}$ as $\pi=\pi_{-n}\cdots\pi_{-1}\pi_{1}\cdots\pi_{n}$
(without $\pi_{0}=0$). For each descent $i\in[n-1]$, we insert a
bar immediately after $\pi_{i}$ and a bar immediately before $\pi_{-i}$.
If $i=0$ is a descent, then we insert a single bar between $\pi_{-1}$
and $\pi_{1}$. For example, for $\pi=\bar{4}72\bar{6}\bar{3}51$,
we have 
\[
\bar{1}|\bar{5}36|\bar{2}|\bar{7}4|\bar{4}7|2|\bar{6}\bar{3}5|1.
\]
The $1/(1-t)$ factor corresponds to inserting any number of bars
in the central position (between $\pi_{-1}$ and $\pi_{1}$), and
the $1/(1-t^{2})^{n}$ factor corresponds to inserting any number
of bars in any of the $n$ positions to the right of the central position,
and for each of these bars, a corresponding bar in the position symmetric
about the center. For example, we may have 
\[
|\bar{1}|\bar{5}|||36||\bar{2}|\bar{7}4|\bar{4}7|2||\bar{6}\bar{3}|||5|1|.
\]

We claim that the right-hand side of (\ref{e-fnbars}) counts the
same arrangements. We consider two cases: the number of bars is even
or the number of bars is odd.

\begin{itemize}
\item Suppose that the number of bars is $2k$ for some $k\geq0$. These
bars create $2k+1$ boxes; we will be inserting letters into the right-most
$k+1$ boxes. Again, for each letter, we decide whether or not to
make it negative and decide which box to put it in. If a letter is
made negative, then it contributes a weight of $y$ and it can only
be inserted into the final $k$ boxes; if a letter is not made negative,
then we can insert it into any of the right-most $k+1$ boxes. Order
the letters in each box in increasing order, and for each letter,
insert its negative into the position symmetric about the center.
These are precisely the arrangements that we want; thus we have the
term $(ky+(k+1))^{n}t^{2k}$, and summing over $k$ gives the total
contribution from having an even number of bars.
\item Suppose that the number of bars in $2k+1$ for some $k\geq0$. Then
these bars create $2k+2$ boxes. In this case, both positive and negative
letters can be inserted into any of the right-most $k+1$ boxes; if
a negative letter is inserted in the ($k+2$)nd box, then the central
bar acts as the bar corresponding to the 0 descent. Hence, this contributes
$((k+1)(y+1))^{n}t^{2k}$, and summing over $k$ gives the total contribution
from having an odd number of bars.
\end{itemize}
Now, observe that 
\begin{align*}
\sum_{n=0}^{\infty}\sum_{k=0}^{\infty}(ky+(k+1))^{n}t^{2k}\frac{x^{n}}{n!} & =\sum_{k=0}^{\infty}e^{(k(1+y)+1)x}t^{2k}\\
 & =e^{x}\sum_{k=0}^{\infty}(t^{2}e^{(1+y)x})^{k}\\
 & =\frac{e^{x}}{1-t^{2}e^{(1+y)x}}
\end{align*}
and 
\begin{align*}
\sum_{n=0}^{\infty}\sum_{k=0}^{\infty}((k+1)(y+1))^{n}t^{2k+1}\frac{x^{n}}{n!} & =\sum_{k=0}^{\infty}e^{((k+1)(y+1))x}t^{2k+1}\\
 & =te^{(1+y)x}\sum_{k=0}^{\infty}(t^{2}e^{(1+y)x})^{k}\\
 & =\frac{te^{(1+y)x}}{1-t^{2}e^{(1+y)x}};
\end{align*}
adding these expressions completes the proof.
\end{proof}

\subsection{Several new Eulerian polynomial identities}

Before proceeding, we prove several new identities relating the Eulerian
polynomials $A_{n}(t)$, refined type B Eulerian polynomials $B_{n}(y,t)$,
and refined flag descent polynomials $F_{n}(y,t)$. Unlike the main
results of this paper which will be presented in Section 4, these
results can be obtained simply using the exponential generating functions
established in the previous subsection and will not require the more
sophisticated techniques introduced in the next section.
\begin{thm}
\label{t-bnan} For $n\geq0$, we have
\[
B_{n}(y,t)=\sum_{k=0}^{n}{n \choose k}(1+y)^{k}(1-t)^{n-k}A_{k}(t).
\]
\end{thm}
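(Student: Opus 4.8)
The plan is to compare exponential generating functions. From Theorem~\ref{t-bnegf} we have
\[
\sum_{n=0}^{\infty}\frac{B_{n}(y,t)}{(1-t)^{n+1}}\frac{x^{n}}{n!}=\frac{e^{x}}{1-te^{(1+y)x}},
\]
and the classical Eulerian generating function stated in the introduction gives
\[
\sum_{k=0}^{\infty}A_{k}(t)\frac{x^{k}}{k!}=\frac{1-t}{1-te^{(1-t)x}}.
\]
The idea is to substitute the variable $x$ by $(1+y)x/(1-t)$ in the Eulerian generating function: this turns $e^{(1-t)x}$ into $e^{(1+y)x}$, so that
\[
\sum_{k=0}^{\infty}A_{k}(t)\frac{(1+y)^{k}}{(1-t)^{k}}\frac{x^{k}}{k!}=\frac{1-t}{1-te^{(1+y)x}}.
\]
Multiplying both sides by $\dfrac{e^{x}}{1-t}$ yields exactly $\dfrac{e^{x}}{1-te^{(1+y)x}}$, which is the right-hand side of Theorem~\ref{t-bnegf}. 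So the first step is to establish the generating-function identity
\[
\sum_{n=0}^{\infty}\frac{B_{n}(y,t)}{(1-t)^{n+1}}\frac{x^{n}}{n!}=\frac{e^{x}}{1-t}\cdot\sum_{k=0}^{\infty}A_{k}(t)\frac{(1+y)^{k}}{(1-t)^{k}}\frac{x^{k}}{k!}.
\]

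The second step is to extract the coefficient of $x^{n}/n!$ from both sides. On the right-hand side we have a product of two exponential generating functions, $e^{x}=\sum_{m\geq0}x^{m}/m!$ and $\sum_{k\geq0}\frac{A_{k}(t)(1+y)^{k}}{(1-t)^{k+1}}\frac{x^{k}}{k!}$, so the product rule for EGFs gives the coefficient of $x^{n}/n!$ as
\[
\sum_{k=0}^{n}\binom{n}{k}\frac{A_{k}(t)(1+y)^{k}}{(1-t)^{k+1}}.
\]
The left-hand coefficient is $B_{n}(y,t)/(1-t)^{n+1}$. Equating the two and multiplying through by $(1-t)^{n+1}$ gives
\[
B_{n}(y,t)=\sum_{k=0}^{n}\binom{n}{k}(1+y)^{k}(1-t)^{n-k}A_{k}(t),
\]
which is the claimed identity.

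I should also check the boundary convention: at $n=0$ the sum has the single term $k=0$, giving $(1+y)^{0}(1-t)^{0}A_{0}(t)=1=B_{0}(y,t)$, consistent with the stated conventions. There is essentially no serious obstacle here — the only points requiring a little care are (i) justifying the substitution $x\mapsto (1+y)x/(1-t)$ at the level of formal power series, which is fine since we are substituting a multiple of $x$ with no constant term (working over $\mathbb{Q}(y,t)$, or as identities of analytic functions near $x=0$ and then invoking that an identity of convergent power series is an identity of formal ones), and (ii) applying the product formula for exponential generating functions correctly. The ``hard part,'' if any, is simply recognizing the right substitution to make the two generating functions line up; once that is seen, the rest is routine coefficient extraction.
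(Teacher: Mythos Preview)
Your proof is correct and follows essentially the same approach as the paper: both arguments compare the exponential generating function of Theorem~\ref{t-bnegf} with the classical Eulerian generating function via a linear change of variable in $x$, then extract coefficients using the EGF product rule. The only cosmetic difference is the direction of the substitution---the paper replaces $x$ by $(1-t)x/(1+y)$ in the $B_{n}$ generating function, whereas you replace $x$ by $(1+y)x/(1-t)$ in the $A_{k}$ generating function---but these are inverse substitutions and lead to the same computation.
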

\begin{proof}
Taking Theorem \ref{t-bnegf}, multiplying both sides by $1-t$, and
then replacing $x$ with $(1-t)x/(1+y)$ yields 
\begin{align*}
\sum_{n=0}^{\infty}\frac{B_{n}(y,t)}{(1+y)^{n}}\frac{x^{n}}{n!} & =\frac{1-t}{1-te^{(1-t)x}}e^{\frac{1-t}{1+y}x}\\
 & =\Big(\sum_{n=0}^{\infty}A_{n}(t)\frac{x^{n}}{n!}\Big)\Big(\sum_{n=0}^{\infty}\left(\frac{1-t}{1+y}\right)^{n}\frac{x^{n}}{n!}\Big)\\
 & =\sum_{n=0}^{\infty}\sum_{k=0}^{n}{n \choose k}\left(\frac{1-t}{1+y}\right)^{n-k}A_{k}(t)\frac{x^{n}}{n!}.
\end{align*}
Equating the coefficients of $x^{n}/n!$ and multiplying both sides
by $(1+y)^{n}$ yields the result.
\end{proof}
By setting $y=1$, we obtain the following corollary.
\begin{cor}
\label{c-bnan} For $n\geq0$, we have
\[
B_{n}(t)=\sum_{k=0}^{n}{n \choose k}2^{k}(1-t)^{n-k}A_{k}(t).
\]
\end{cor}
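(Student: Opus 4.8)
The plan is simply to specialize the identity of Theorem~\ref{t-bnan} at $y=1$. Recall that $B_{n}(y,t)=\sum_{\pi\in\mathfrak{B}_{n}}y^{\neg(\pi)}t^{\des_{B}(\pi)}$ was defined precisely as the refinement of $B_{n}(t)=\sum_{\pi\in\mathfrak{B}_{n}}t^{\des_{B}(\pi)}$ that tracks the number of negative letters; hence substituting $y=1$ collapses the $y$-grading and gives $B_{n}(1,t)=B_{n}(t)$. On the right-hand side of Theorem~\ref{t-bnan}, the only dependence on $y$ is through the factor $(1+y)^{k}$, which becomes $(1+1)^{k}=2^{k}$ upon setting $y=1$.

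Carrying out these two substitutions in
\[
B_{n}(y,t)=\sum_{k=0}^{n}\binom{n}{k}(1+y)^{k}(1-t)^{n-k}A_{k}(t)
\]
immediately yields
\[
B_{n}(t)=\sum_{k=0}^{n}\binom{n}{k}2^{k}(1-t)^{n-k}A_{k}(t),
\]
which is the claimed identity, valid for all $n\geq 0$ since Theorem~\ref{t-bnan} holds for all $n\geq 0$.

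There is no real obstacle here: the corollary is a one-line consequence of the theorem it follows, and the only thing worth noting explicitly is the trivial fact that $B_{n}(1,t)=B_{n}(t)$, which is immediate from the definitions of $B_{n}(y,t)$ and $B_{n}(t)$ given in Subsection~2.3. (One could alternatively give a direct proof by setting $y=1$ in Theorem~\ref{t-bnegf} to recover Steingr\'imsson's generating function~(\ref{e-bn}) and then repeating the manipulation in the proof of Theorem~\ref{t-bnan}, but this merely re-derives the specialization we already have for free.)
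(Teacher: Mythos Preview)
Your proof is correct and matches the paper's approach exactly: the paper obtains this corollary simply by setting $y=1$ in Theorem~\ref{t-bnan}, which is precisely what you do.
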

\begin{thm}
\label{t-fnan} For $n\geq1$, we have
\[
F_{n}(y,t)=\frac{1}{1+t}\Big(\frac{(1+y)^{n}}{t}A_{n}(t^{2})+\sum_{k=0}^{n}{n \choose k}(1+y)^{k}(1-t^{2})^{n-k}A_{k}(t^{2})\Big).
\]
\end{thm}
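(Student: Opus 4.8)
The plan is to mimic the proof of Theorem \ref{t-bnan}, but starting from the exponential generating function for $F_n(y,t)$ in Theorem \ref{t-fnegf} instead of the one for $B_n(y,t)$. First I would take the identity
\[
\sum_{n=0}^{\infty}\frac{F_{n}(y,t)}{(1-t)(1-t^{2})^{n}}\frac{x^{n}}{n!}=\frac{e^{x}+te^{(1+y)x}}{1-t^{2}e^{(1+y)x}},
\]
multiply both sides by $1-t$, and substitute $x\mapsto(1-t^{2})x/(1+y)$ so that the $(1-t^2)^n$ factor in the denominator on the left is absorbed into $x^n/n!$, giving $\sum_{n\geq 0}\frac{F_n(y,t)}{(1+y)^n}\frac{x^n}{n!}$ on the left. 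On the right, the denominator becomes $1-t^2 e^{(1-t^2)x}$, which I want to recognize — after pulling out the factor $1-t$ — as $\frac{1-t^2}{1-t^2 e^{(1-t^2)x}}$, i.e.\ the EGF $\sum_n A_n(t^2)\frac{x^n}{n!}$ with $t$ replaced by $t^2$. The numerator contributes two exponential factors, $e^{(1-t^2)x/(1+y)}$ from the $e^x$ term and $t\,e^{(1-t^2)x}$ from the $t e^{(1+y)x}$ term (after the substitution the exponent $(1+y)x$ becomes $(1-t^2)x$).

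So the right-hand side splits as a sum of two products of EGFs:
\[
\frac{1}{1+t}\Big(\sum_{n}A_n(t^2)\tfrac{x^n}{n!}\Big)\Big(\sum_n \big(\tfrac{1-t^2}{1+y}\big)^n\tfrac{x^n}{n!}\Big)
\;+\;
\frac{t}{1+t}\Big(\sum_{n}A_n(t^2)\tfrac{x^n}{n!}\Big)\Big(\sum_n (1-t^2)^n\tfrac{x^n}{n!}\Big),
\]
using $\frac{1-t}{1-t^2}=\frac{1}{1+t}$ to handle the leftover constant. Then I would extract the coefficient of $x^n/n!$ from each side. The first product gives $\frac{1}{1+t}\sum_{k=0}^n\binom{n}{k}\big(\frac{1-t^2}{1+y}\big)^{n-k}A_k(t^2)$; the second gives $\frac{t}{1+t}\sum_{k=0}^n\binom{n}{k}(1-t^2)^{n-k}A_k(t^2)$. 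Multiplying through by $(1+y)^n$ turns the first sum into $\frac{1}{1+t}\sum_{k=0}^n\binom{n}{k}(1+y)^k(1-t^2)^{n-k}A_k(t^2)$, exactly the sum appearing in the claimed identity. For the second sum, multiplying by $(1+y)^n$ leaves an extra factor $(1+y)^n$ out front since there is no compensating $(1+y)$ in the denominator of that term — this should produce the $\frac{(1+y)^n}{t}\cdot t \cdot \frac{1}{1+t}A_n(t^2)\cdot(\text{something})$ piece; I need to check that the $k=n$ behavior and the powers of $t$ line up to give precisely $\frac{(1+y)^n}{t(1+t)}A_n(t^2)$.

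The step I expect to be the main obstacle — really the only place one can go wrong — is the bookkeeping of the substitution $x\mapsto(1-t^2)x/(1+y)$ interacting with the \emph{two different} exponential factors in the numerator: the $e^x$ term picks up exponent $(1-t^2)x/(1+y)$ while the $t e^{(1+y)x}$ term picks up exponent $(1-t^2)x$, so the two summands end up with different geometric-series-in-$x$ factors, and I must be careful that the first yields $\big(\frac{1-t^2}{1+y}\big)^n$ and the second yields $(1-t^2)^n$ per the above. A secondary subtlety is tracking the scalar prefactors: after multiplying by $1-t$ the right side has an overall $(1-t)$, the denominator supplies $\frac{1}{1-t^2}=\frac{1}{(1-t)(1+t)}$ when I normalize it to the $A_n(t^2)$ EGF, and these combine to $\frac{1}{1+t}$; I should verify this cancellation explicitly rather than by sign-chasing. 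Once the two coefficient extractions are done and multiplied by $(1+y)^n$, comparing with the right-hand side of the theorem should be immediate, and the restriction $n\geq 1$ enters only because the $\frac{(1+y)^n}{t}A_n(t^2)$ term would misbehave at $n=0$ (indeed $F_0(y,t)=1$ by the stated convention, which one can check separately if desired).
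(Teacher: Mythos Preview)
Your overall strategy---multiply Theorem~\ref{t-fnegf} by $1-t$, substitute $x\mapsto(1-t^{2})x/(1+y)$, recognize $(1-t^{2})/(1-t^{2}e^{(1-t^{2})x})$ as the EGF for $A_{n}(t^{2})$, and extract coefficients---is correct and is the same manipulation the paper uses. The first convolution indeed gives $\frac{1}{1+t}\sum_{k}\binom{n}{k}(1+y)^{k}(1-t^{2})^{n-k}A_{k}(t^{2})$ after clearing $(1+y)^{n}$, exactly as you say.

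The gap is in the second term. After your substitution and multiplication by $(1+y)^{n}$, that piece is
\[
\frac{t(1+y)^{n}}{1+t}\sum_{k=0}^{n}\binom{n}{k}(1-t^{2})^{n-k}A_{k}(t^{2}),
\]
a full convolution, not ``the $k=n$ term plus bookkeeping''. To reach the target $\frac{(1+y)^{n}}{t(1+t)}A_{n}(t^{2})$ you need the identity
\[
\sum_{k=0}^{n}\binom{n}{k}(1-s)^{n-k}A_{k}(s)=\frac{A_{n}(s)}{s}\qquad(n\geq 1),
\]
which is genuine (it fails at $n=0$) and is not merely a matter of powers of $t$ lining up. It does follow quickly from the EGF---with $u=e^{(1-s)x}$ one has $\dfrac{(1-s)u}{1-su}=\dfrac{1}{s}\cdot\dfrac{1-s}{1-su}-\dfrac{1-s}{s}$, so the coefficients agree for $n\geq 1$---but you should state and prove it rather than expect it to fall out of sign-chasing.

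The paper sidesteps this extra identity by first rewriting Theorem~\ref{t-fnegf} in the equivalent form
\[
\frac{1}{1-t}\Big(1+t\sum_{n\geq 1}\frac{F_{n}(y,t)}{(1-t^{2})^{n}}\frac{x^{n}}{n!}\Big)=\frac{1+te^{x}}{1-t^{2}e^{(1+y)x}},
\]
so that after the same substitution the numerator becomes $1+te^{(1-t^{2})x/(1+y)}$: the ``$1$'' pairs with the Eulerian EGF to yield $A_{n}(t^{2})$ directly, with no convolution, and only the $te^{(1-t^{2})x/(1+y)}$ factor produces the binomial sum. Your direct route works too, but it trades that preliminary rewrite for the Eulerian identity above.
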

\begin{proof}
It is readily checked that the statement of Theorem \ref{t-fnegf}
is equivalent to
\[
\frac{1}{1-t}\Big(1+t\sum_{n=1}^{\infty}\frac{F_{n}(y,t)}{(1-t^{2})^{n}}\frac{x^{n}}{n!}\Big)=\frac{1+te^{x}}{1-t^{2}e^{(1+y)x}}.
\]
Multiplying both sides by $1-t^{2}$ and replacing $x$ with $(1-t^{2})x/(1+y)$
yields 
\begin{align*}
(1+t)\Big(1+t\sum_{n=1}^{\infty}\frac{F_{n}(y,t)}{(1+y)^{n}}\frac{x^{n}}{n!}\Big) & =\frac{1-t^{2}}{1-t^{2}e^{(1+t^{2})x}}(1+te^{\frac{1-t^{2}}{1+y}x})\\
 & =\Big(\sum_{n=0}^{\infty}A_{n}(t^{2})\frac{x^{n}}{n!}\Big)\Big(1+t\sum_{n=0}^{\infty}\left(\frac{1-t^{2}}{1+y}\right)^{n}\frac{x^{n}}{n!}\Big)\\
 & =\sum_{n=0}^{\infty}\Big(A_{n}(t^{2})+t\sum_{k=0}^{n}{n \choose k}\left(\frac{1-t^{2}}{1+y}\right)^{n-k}A_{k}(t^{2})\Big)\frac{x^{n}}{n!}.
\end{align*}
Equating the coefficients of $x^{n}/n!$ and dividing both sides by
$t(1+t)/(1+y)^{n}$ yields the result.
\end{proof}
We can set $y=1$ to obtain an identity relating $F_{n}(t)$ and $A_{n}(t)$,
but the nicer identity
\begin{equation}
F_{n}(t)=\frac{(1+t)^{n}}{t}A_{n}(t)\label{e-fnan}
\end{equation}
can be obtained by directly comparing the generating functions of
$F_{n}(t)$ and $A_{n}(t)$, and can also be recovered as a specialization
of a more general identity of Adin, Brenti, and Roichman \cite[Theorem 4.4]{Adin2001}.
\begin{thm}
\label{t-fnbn} For $n\geq1$, we have
\[
F_{n}(y,t)=\frac{1}{1+t}\Big(B_{n}(y,t^{2})+\frac{1}{t}\sum_{k=0}^{n}(-1)^{n-k}{n \choose k}(1-t^{2})^{n-k}B_{k}(y,t^{2})\Big).
\]
\end{thm}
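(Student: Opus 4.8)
The plan is to deduce this identity purely formally from the two identities already proved in this subsection, namely Theorem \ref{t-fnan} and Theorem \ref{t-bnan}, without returning to exponential generating functions. The first step is to notice that, after replacing $t$ by $t^{2}$, Theorem \ref{t-bnan} reads
\[
B_{n}(y,t^{2})=\sum_{k=0}^{n}\binom{n}{k}(1+y)^{k}(1-t^{2})^{n-k}A_{k}(t^{2}),
\]
which is exactly the second summand inside the parentheses in Theorem \ref{t-fnan}. Hence Theorem \ref{t-fnan} can be rewritten as
\[
F_{n}(y,t)=\frac{1}{1+t}\Big(\frac{(1+y)^{n}}{t}A_{n}(t^{2})+B_{n}(y,t^{2})\Big),
\]
and comparing this with the claimed formula reduces the whole problem to verifying the single identity
\[
(1+y)^{n}A_{n}(t^{2})=\sum_{k=0}^{n}(-1)^{n-k}\binom{n}{k}(1-t^{2})^{n-k}B_{k}(y,t^{2}).
\]

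Next I would recognize this last identity as the binomial inversion of Theorem \ref{t-bnan}. For a formal parameter $s$, set $a_{k}\coloneqq(1+y)^{k}A_{k}(s)$ and $b_{k}\coloneqq B_{k}(y,s)$; then Theorem \ref{t-bnan} asserts $b_{n}=\sum_{k=0}^{n}\binom{n}{k}(1-s)^{n-k}a_{k}$. The standard binomial inversion formula—equivalently, the fact that the lower-triangular arrays $\big[\binom{n}{k}(1-s)^{n-k}\big]$ and $\big[(-1)^{n-k}\binom{n}{k}(1-s)^{n-k}\big]$ are mutually inverse—yields $a_{n}=\sum_{k=0}^{n}(-1)^{n-k}\binom{n}{k}(1-s)^{n-k}b_{k}$, and specializing $s=t^{2}$ gives precisely the displayed identity. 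Combined with the rewriting above, this completes the proof.

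There is essentially no obstacle here: once one observes that the second term in Theorem \ref{t-fnan} already equals $B_{n}(y,t^{2})$, the rest is a one-line application of binomial inversion, and the only point deserving care is applying the inversion in the correct direction and checking that the substitution $s\mapsto t^{2}$ is harmless (which it is, since every quantity involved is a polynomial in $t$, $y$, and the formal Eulerian polynomials). If one prefers not to invoke binomial inversion as a black box, the needed identity can instead be checked directly: substitute the formula of Theorem \ref{t-bnan} (with argument $t^{2}$) into the right-hand side, swap the order of summation, and collapse the inner sum using $\binom{n}{k}\binom{k}{j}=\binom{n}{j}\binom{n-j}{k-j}$ together with the alternating binomial identity $\sum_{m=0}^{n-j}(-1)^{m}\binom{n-j}{m}=[n=j]$. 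This is the only computation in the argument, and it is entirely routine.
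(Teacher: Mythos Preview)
Your proposal is correct and follows essentially the same approach as the paper's own proof: both reduce Theorem \ref{t-fnbn} to Theorem \ref{t-fnan} by recognizing the second sum as $B_{n}(y,t^{2})$ via Theorem \ref{t-bnan}, and then obtain the remaining identity $(1+y)^{n}A_{n}(t^{2})=\sum_{k}(-1)^{n-k}\binom{n}{k}(1-t^{2})^{n-k}B_{k}(y,t^{2})$ by inverting Theorem \ref{t-bnan} (the paper calls this ``inclusion-exclusion'', you call it ``binomial inversion'').
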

\begin{proof}
Due to Theorem \ref{t-fnan}, it suffices to show that 
\[
B_{n}(y,t^{2})=\sum_{k=0}^{n}{n \choose k}(1+y)^{k}(1-t^{2})^{n-k}A_{k}(t^{2})
\]
and that 
\[
\frac{(1+y)^{n}}{t}A_{n}(t^{2})=\frac{1}{t}\sum_{k=0}^{n}(-1)^{n-k}{n \choose k}(1-t^{2})^{n-k}B_{k}(y,t^{2}).
\]
Note that Theorem \ref{t-bnan} directly implies the former equation,
whereas a simple application of inclusion-exclusion to Theorem \ref{t-bnan}
implies the latter equation.
\end{proof}
Rather than stating the result of setting $y=1$ in Theorem \ref{t-fnbn},
we give a simpler identity relating the polynomials $F_{n}(t)$ and
$B_{n}(t)$ using (\ref{e-fnan}).
\begin{cor}
\label{c-fnbn} For $n\geq1$, we have
\[
F_{n}(t)=\frac{1}{t}\left(\frac{1+t}{2}\right)^{n}\sum_{k=0}^{n}(-1)^{n-k}{n \choose k}(1-t)^{n-k}B_{k}(t).
\]
\end{cor}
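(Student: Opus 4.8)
The plan is to avoid Theorem~\ref{t-fnbn} (whose right-hand side involves the arguments $t^2$ rather than $t$) and instead combine directly the two identities already at hand that relate $F_n$, $A_n$, and $B_n$ in the single variable $t$: namely the identity~(\ref{e-fnan}), $F_n(t)=\frac{(1+t)^n}{t}A_n(t)$ for $n\geq1$, and Corollary~\ref{c-bnan}, $B_n(t)=\sum_{k=0}^n{n\choose k}2^k(1-t)^{n-k}A_k(t)$ for $n\geq0$. The idea is to invert Corollary~\ref{c-bnan} to express $A_n(t)$ in terms of $B_0(t),\dots,B_n(t)$, and then plug the result into~(\ref{e-fnan}).

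For the inversion, I would rewrite Corollary~\ref{c-bnan} as $B_n(t)=\sum_{k=0}^n{n\choose k}(1-t)^{n-k}\bigl(2^kA_k(t)\bigr)$, which exhibits the sequence $\bigl(B_n(t)\bigr)_{n\geq0}$ as the (weighted) binomial transform of the sequence $\bigl(2^nA_n(t)\bigr)_{n\geq0}$ with weight $1-t$. Binomial inversion then gives $2^nA_n(t)=\sum_{k=0}^n(-1)^{n-k}{n\choose k}(1-t)^{n-k}B_k(t)$, that is,
\[
A_n(t)=\frac{1}{2^n}\sum_{k=0}^n(-1)^{n-k}{n\choose k}(1-t)^{n-k}B_k(t).
\]

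Substituting this into~(\ref{e-fnan}), for $n\geq1$ we obtain
\[
F_n(t)=\frac{(1+t)^n}{t}A_n(t)=\frac{1}{t}\left(\frac{1+t}{2}\right)^{\!n}\sum_{k=0}^n(-1)^{n-k}{n\choose k}(1-t)^{n-k}B_k(t),
\]
which is exactly the asserted identity. There is no real obstacle here: the only step demanding a moment of care is the binomial inversion, where one must invert $2^nA_n(t)$ (not $A_n(t)$ itself) and track the sign in the weight $-(1-t)$ correctly; this is just the standard finite-dimensional binomial inversion, and if one prefers it can be re-derived in one line by multiplying the relevant exponential generating functions. One could alternatively verify the corollary directly from the exponential generating functions for $F_n(t)$, $A_n(t)$, and $B_n(t)$, but the two-line substitution above is the cleanest route.
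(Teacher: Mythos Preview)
Your proof is correct and follows essentially the same approach as the paper's own proof: the paper inverts Corollary~\ref{c-bnan} (phrasing it as ``inclusion-exclusion'' rather than ``binomial inversion,'' but it is the same step) to obtain $A_n(t)=\frac{1}{2^n}\sum_{k=0}^n(-1)^{n-k}\binom{n}{k}(1-t)^{n-k}B_k(t)$, and then substitutes into~(\ref{e-fnan}).
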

\begin{proof}
By applying inclusion-exclusion to Corollary \ref{c-bnan}, we obtain
\begin{equation}
A_{n}(t)=\frac{1}{2^{n}}\sum_{k=0}^{n}(-1)^{n-k}{n \choose k}(1-t)^{n-k}B_{k}(t).\label{e-anbn}
\end{equation}
Combining (\ref{e-fnan}) and (\ref{e-anbn}) yields the result.
\end{proof}

\section{Noncommutative symmetric functions}

\subsection{Definitions}

In this section, we introduce relevant aspects of the theory of noncommutative
symmetric functions, which were formally introduced by Gelfand, et
al.\ \cite{ncsf1} in 1995 but were implicitly utilized in previous
work, such as \cite{gessel-thesis}.

Throughout this section, fix a field $F$ of characteristic zero.
(We can take $F$ to be $\mathbb{C}$ in subsequent sections.) Then
$F\langle\langle X_{1},X_{2},\dots\rangle\rangle$ is the $F$-algebra
of formal power series in countably many noncommuting variables $X_{1},X_{2},\dots$.
Consider the elements 
\[
\mathbf{h}_{n}\coloneqq\sum_{i_{1}\leq\cdots\leq i_{n}}X_{i_{1}}X_{i_{2}}\cdots X_{i_{n}}
\]
of $F\langle\langle X_{1},X_{2},\dots\rangle\rangle$, which are noncommutative
versions of the complete symmetric functions $h_{n}$. Note that $\mathbf{h}_{n}$
is the noncommutative generating function for weakly increasing words
of length $n$ on the alphabet $\mathbb{P}$. For example, the weakly
increasing word $13449$ is encoded by $X_{1}X_{3}X_{4}^{2}X_{9}$,
which appears as a term in $\mathbf{h}_{5}$. Given a composition
$L=(L_{1},\dots,L_{k})$, we let 
\[
\mathbf{h}_{L}\coloneqq\mathbf{h}_{L_{1}}\cdots\mathbf{h}_{L_{k}}.
\]
Equivalently, 
\[
\mathbf{h}_{L}=\sum_{L}X_{i_{1}}X_{i_{2}}\cdots X_{i_{n}}
\]
where the sum is over all $(i_{1},\dots,i_{n})$ satisfying 
\[
\underset{L_{1}}{\underbrace{i_{1}\leq\cdots\leq i_{L_{1}}}},\underset{L_{2}}{\underbrace{i_{L_{1}+1}\leq\cdots\leq i_{L_{1}+L_{2}}}},\dots,\underset{L_{k}}{\underbrace{i_{L_{1}+\cdots+L_{k-1}+1}\leq\cdots\leq i_{n}}},
\]
so $\mathbf{h}_{L}$ is the noncommutative generating function for
words in $\mathbb{P}$ whose descent composition $K$ satisfies $K\leq L$
in the reverse refinement ordering.

The $F$-algebra generated by the elements $\mathbf{h}_{L}$ is called
the algebra\textbf{ $\mathbf{Sym}$} of \textit{noncommutative symmetric
functions} with coefficients in $F$, which is a subalgebra of $F\langle\langle X_{1},X_{2},\dots\rangle\rangle$.\footnote{The algebra $\mathbf{Sym}$ is defined differently in \cite{ncsf1},
but the algebras are the same.}

Next, let $\mathbf{Sym}_{n}$ be the vector space of noncommutative
symmetric functions homogeneous of degree $n$, so $\mathbf{Sym}_{n}$
is spanned by $\{\mathbf{h}_{L}\}_{L\vDash n}$ where $L\vDash n$
indicates that $L$ is a composition of $n$, and\textbf{ $\mathbf{Sym}$}
is a graded $F$-algebra with 
\[
\mathbf{Sym}=\bigoplus_{n=0}^{\infty}\mathbf{Sym}_{n}.
\]

For a composition $L=(L_{1},\dots,L_{k})$, we also define 
\[
\mathbf{r}_{L}\coloneqq\sum_{L}X_{i_{1}}X_{i_{2}}\cdots X_{i_{n}}
\]
where the sum is over all $(i_{1},\dots,i_{n})$ satisfying 
\[
\underset{L_{1}}{\underbrace{i_{1}\leq\cdots\leq i_{L_{1}}}}>\underset{L_{2}}{\underbrace{i_{L_{1}+1}\leq\cdots\leq i_{L_{1}+L_{2}}}}>\cdots>\underset{L_{k}}{\underbrace{i_{L_{1}+\cdots+L_{k-1}+1}\leq\cdots\leq i_{n}}}.
\]
Then, $\mathbf{r}_{L}$ is the noncommutative generating function
for words on the alphabet $\mathbb{P}$ with descent composition $L$.

Note that 
\begin{equation}
\mathbf{h}_{L}=\sum_{K\leq L}\mathbf{r}_{K},\label{e-hitor}
\end{equation}
and by inclusion-exclusion, 
\begin{equation}
\mathbf{r}_{L}=\sum_{K\leq L}(-1)^{l(L)-l(K)}\mathbf{h}_{K}.\label{e-ritoh}
\end{equation}
Hence, the $\mathbf{r}_{L}$ are noncommutative symmetric functions,
and are in fact noncommutative versions of the ribbon Schur functions
$r_{L}$. 

Since $\mathbf{r}_{L}$ and $\mathbf{r}_{M}$ have no terms in common
for $L\neq M$, it is clear that $\{\mathbf{r}_{L}\}_{L\vDash n}$
is linearly independent. From (\ref{e-hitor}), we see that $\{\mathbf{r}_{L}\}_{L\vDash n}$
spans $\mathbf{Sym}_{n}$, so $\{\mathbf{r}_{L}\}_{L\vDash n}$ is
a basis for $\mathbf{Sym}_{n}$. Because $\{\mathbf{h}_{L}\}_{L\vDash n}$
spans $\mathbf{Sym}_{n}$ and has the same cardinality as $\{\mathbf{r}_{L}\}_{L\vDash n}$,
we conclude that $\{\mathbf{h}_{L}\}_{L\vDash n}$ is also a basis
for $\mathbf{Sym}_{n}$.

Finally, let us consider the noncommutative generating function 
\[
\mathbf{e}_{n}\coloneqq\sum_{i_{1}>\cdots>i_{n}}X_{i_{1}}X_{i_{2}}\cdots X_{i_{n}}
\]
for decreasing words of length $n$ on the alphabet $\mathbb{P}$.
If we let 
\[
\mathbf{h}(x)\coloneqq\sum_{n=0}^{\infty}\mathbf{h}_{n}x^{n}\in\mathbf{Sym}[[x]]
\]
be the generating function for the noncommutative complete symmetric
functions $\mathbf{h}_{n}$ and let 
\[
\mathbf{e}(x)\coloneqq\sum_{n=0}^{\infty}\mathbf{e}_{n}x^{n}
\]
be the generating function for the $\mathbf{e}_{n}$, then it can
be shown (see \cite[p. 38]{gessel-thesis} or \cite[Section 7.3]{ncsf1})
that 
\begin{equation}
\mathbf{e}(x)=\mathbf{h}(-x)^{-1}.\label{e-ehr}
\end{equation}
Since $\mathbf{h}(-x)^{-1}\in\mathbf{Sym}[[x]]$, it follows that
the $\mathbf{e}_{n}$ are also noncommutative symmetric functions.
The generating functions for the ordinary (commutative) elementary
symmetric functions $e_{n}$ and for the complete symmetric functions
$h_{n}$ satisfy the same relation as (\ref{e-ehr}), so we see that
the $\mathbf{e}_{n}$ are noncommutative versions of the elementary
symmetric functions $e_{n}$.

Although we won't need to use this fact in this paper, it is worth
noting that for a composition $L=(L_{1},\dots,L_{k})$ of $n$, we
can define 
\[
\mathbf{e}_{L}\coloneqq\mathbf{e}_{L_{1}}\mathbf{e}_{L_{2}}\cdots\mathbf{e}_{L_{k}}
\]
and $\{\mathbf{e}_{L}\}_{L\models n}$ is a third basis for $\mathbf{Sym}_{n}$.
This can be proven using a noncommutative analogue of the $\omega$
involution for ordinary symmetric functions (see \cite[Section 7.6]{Stanley2001}).

\subsection{Homomorphisms}

Our main results in the next section are obtained by applying certain
homomorphisms to various identities involving noncommutative symmetric
functions. Although noncommutative symmetric functions are generating
functions for words, we shall see that these homomorphisms allow us
to move from the realm of word enumeration to that of permutation
enumeration. We define these homomorphisms $\Phi:\mathbf{Sym}\rightarrow F[[x]]$
by $\Phi(\mathbf{h}_{n})=x^{n}/n!$, and $\Phi_{q}:\mathbf{Sym}\rightarrow F(q)[[x]]$
by $\Phi_{q}(\mathbf{h}_{n})=x^{n}/[n]_{q}!$. Then if $L$ is a composition
of $n$, we have 
\[
\Phi(\mathbf{h}_{L})=\frac{x^{L_{1}}}{L_{1}!}\cdots\frac{x^{L_{k}}}{L_{k}!}={n \choose L}\frac{x^{n}}{n!}
\]
and 
\[
\Phi_{q}(\mathbf{h}_{L})=\frac{x^{L_{1}}}{[L_{1}]_{q}!}\cdots\frac{x^{L_{k}}}{[L_{k}]_{q}!}={n \choose L}_{\!\!q}\frac{x^{n}}{[n]_{q}!}.
\]

For our proofs, we also need to determine the effect of $\Phi$ and
$\Phi_{q}$ on $\mathbf{r}_{L}$, $\mathbf{h}(1)=\sum_{n=0}^{\infty}\mathbf{h}_{n}$,
and $\mathbf{e}(1)=\sum_{n=0}^{\infty}\mathbf{e}_{n}$. Recall that
$\beta(L)$ is the number of $n$-permutations with descent composition
$L$ and that $\beta_{q}(L)$ is the polynomial counting $n$-permutations
with descent composition $L$ by inversion number.
\begin{lem}
\label{l-Phi} \leavevmode

\begin{enumerate}
\item [\normalfont{(a)}] Let $L$ be a composition of $n$. Then $\Phi(\mathbf{r}_{L})=\beta(L)x^{n}/n!$.
\item [\normalfont{(b)}] $\Phi(\mathbf{h}(1))=e^{x}$.
\item [\normalfont{(c)}] $\Phi(\mathbf{e}(1))=e^{x}$.
\end{enumerate}
\end{lem}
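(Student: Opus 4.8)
\textbf{Proof plan for Lemma \ref{l-Phi}.}

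The plan is to prove the three parts in order, extracting (a) from the defining property $\Phi(\mathbf{h}_L) = \binom{n}{L}x^n/n!$ together with the change-of-basis formula (\ref{e-ritoh}), and then obtaining (b) from (a) by summing over all compositions, and (c) from (b) via the reciprocity relation (\ref{e-ehr}).

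For part (a), I would start from equation (\ref{e-ritoh}), namely $\mathbf{r}_L = \sum_{K\leq L}(-1)^{l(L)-l(K)}\mathbf{h}_K$. Since $\Phi$ is a homomorphism (in particular $F$-linear), applying it term by term gives
\[
\Phi(\mathbf{r}_L) = \sum_{K\leq L}(-1)^{l(L)-l(K)}\Phi(\mathbf{h}_K) = \sum_{K\leq L}(-1)^{l(L)-l(K)}\binom{n}{K}\frac{x^n}{n!},
\]
using that every $K\leq L$ is also a composition of $n$ so the power of $x$ is uniformly $x^n$. The coefficient of $x^n/n!$ is exactly the right-hand side of (\ref{e-betaL}) in Lemma \ref{l-Despre}(a), which equals $\beta(L)$; this yields $\Phi(\mathbf{r}_L) = \beta(L)\,x^n/n!$.

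For part (b), I would observe that $\mathbf{h}(1) = \sum_{n=0}^\infty \mathbf{h}_n$ and that within each homogeneous degree $n$ the functions $\{\mathbf{r}_L\}_{L\vDash n}$ form a basis; more directly, summing (\ref{e-hitor}) over all $L\vDash n$, every composition $K$ of $n$ appears in $\sum_{L\vDash n}\mathbf{h}_L$ with multiplicity equal to the number of $L\geq K$. Cleaner still: since $\mathbf{h}_n = \sum_{L\vDash n}\mathbf{r}_L$ (take $L=(n)$, or sum (\ref{e-hitor}) appropriately — actually $\mathbf{h}_n$ is the noncommutative generating function for all weakly increasing words of length $n$, hence $\mathbf{h}_n = \sum_{L\vDash n}\mathbf{r}_L$ as $\mathbf{r}_L$ collects words of descent composition exactly $L$ and $\mathbf{h}_n$ has descent composition $\leq(n)$, i.e.\ all of them), applying $\Phi$ and part (a) gives $\Phi(\mathbf{h}_n) = \sum_{L\vDash n}\beta(L)\,x^n/n! = n!\cdot x^n/n! = x^n/n!$ since $\sum_{L\vDash n}\beta(L) = |\mathfrak{S}_n| = n!$ — consistent with the definition of $\Phi$. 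Then $\Phi(\mathbf{h}(1)) = \sum_{n=0}^\infty \Phi(\mathbf{h}_n) = \sum_{n=0}^\infty x^n/n! = e^x$.

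For part (c), I would apply $\Phi$ to the identity (\ref{e-ehr}), $\mathbf{e}(x) = \mathbf{h}(-x)^{-1}$, after first recording the $q=1$ (ordinary) version of the effect of $\Phi$ on generating series: from $\Phi(\mathbf{h}_n) = x^n/n!$ one gets, after the substitution tracking the formal variable, that $\Phi$ applied to $\mathbf{h}(1) = \sum\mathbf{h}_n$ gives $e^x$, and applied to $\mathbf{h}(-1)=\sum(-1)^n\mathbf{h}_n$ gives $e^{-x}$. Since $\Phi$ is an algebra homomorphism it commutes with taking inverses in the power-series ring, so $\Phi(\mathbf{e}(1)) = \Phi(\mathbf{h}(-1)^{-1}) = \Phi(\mathbf{h}(-1))^{-1} = (e^{-x})^{-1} = e^x$. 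The one point requiring a little care is the bookkeeping of the formal variable $x$ attached to $\mathbf{h}(x)$ versus the $x$ produced by $\Phi$: one must check that $\Phi$ extends to $\mathbf{Sym}[[x]] \to F[[x]]$ sending $\mathbf{h}(x) \mapsto \sum x^{2n}/n!$-type expressions correctly, or more simply just work with $\mathbf{h}(1)$, $\mathbf{e}(1)$ directly and use $\mathbf{e}(1)=\mathbf{h}(-1)^{-1}$ (the specialization of (\ref{e-ehr}) at $x=1$, valid since all series have constant term $1$). The main obstacle is thus not any hard computation but ensuring that $\Phi$ genuinely respects the inversion $\mathbf{e}(1)=\mathbf{h}(-1)^{-1}$ — which it does, being an $F$-algebra homomorphism into a commutative power series ring where the relevant elements are units.
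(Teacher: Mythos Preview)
Your arguments for parts (a) and (c) are correct and match the paper's proof essentially line for line: (a) via (\ref{e-ritoh}) and (\ref{e-betaL}), and (c) via the specialization $\mathbf{e}(1)=\mathbf{h}(-1)^{-1}$ of (\ref{e-ehr}) together with $\Phi(\mathbf{h}(-1))=e^{-x}$.

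For part (b), your final sentence
\[
\Phi(\mathbf{h}(1)) \;=\; \sum_{n\ge 0}\Phi(\mathbf{h}_n) \;=\; \sum_{n\ge 0}\frac{x^n}{n!} \;=\; e^x
\]
is exactly the paper's proof and is all that is needed. The preceding detour through part (a), however, contains a genuine error: the claim $\mathbf{h}_n=\sum_{L\vDash n}\mathbf{r}_L$ is false. A weakly increasing word of length $n$ has \emph{no} descents, so its descent composition is exactly $(n)$; in the reverse refinement order $(n)$ is the minimum element, so ``$K\le (n)$'' means $K=(n)$, not ``all compositions of $n$''. Thus $\mathbf{h}_n=\mathbf{r}_{(n)}$, not the full sum. (What is true is $\sum_{L\vDash n}\mathbf{r}_L = \mathbf{h}_{1^n}=\mathbf{h}_1^n$, the generating function for \emph{all} length-$n$ words.) Simply drop that detour and keep the direct computation from the definition of $\Phi$, as the paper does.
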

\begin{proof}
Part (a):{\allowdisplaybreaks}
\begin{align*}
\Phi(\mathbf{r}_{L}) & =\Phi\Big(\sum_{K\leq L}(-1)^{l(L)-l(K)}\mathbf{h}_{K}\Big),\:\mbox{by (\ref{e-ritoh})}\\
 & =\sum_{K\leq L}(-1)^{l(L)-l(K)}\Phi(\mathbf{h}_{K})\\
 & =\sum_{K\leq L}(-1)^{l(L)-l(K)}{n \choose K}\frac{x^{n}}{n!}\\
 & =\beta(L)\frac{x^{n}}{n!},\:\mbox{by (\ref{e-betaL})}.
\end{align*}
Part (b):
\begin{align*}
\Phi(\mathbf{h}(1)) & =\Phi\Big(\sum_{n=0}^{\infty}\mathbf{h}_{n}\Big)=\sum_{n=0}^{\infty}\Phi(\mathbf{h}_{n})=\sum_{n=0}^{\infty}\frac{x^{n}}{n!}=e^{x}.
\end{align*}
Part (c):
\begin{align*}
\Phi(\mathbf{e}(1)) & =\Phi(\mathbf{h}(-1)^{-1}),\:\mbox{by (\ref{e-ehr})}\\
 & =\Big(\sum_{n=0}^{\infty}(-1)^{n}\Phi(\mathbf{h}_{n})\Big)^{-1}\\
 & =\Big(\sum_{n=0}^{\infty}\frac{(-x)^{n}}{n!}\Big)^{-1}\\
 & =(e^{-x})^{-1}\\
 & =e^{x}.\qedhere
\end{align*}
\end{proof}
Consider the $q$-exponential function 
\[
\exp_{q}(x)\coloneqq\sum_{n=0}^{\infty}\frac{x^{n}}{[n]_{q}!}
\]
and its variant 
\[
\Exp_{q}(x)\coloneqq\sum_{n=0}^{\infty}q^{{n \choose 2}}\frac{x^{n}}{[n]_{q}!},
\]
both $q$-analogues of the classical exponential function $e^{x}$.
It is well known that $\Exp_{q}(x)=(\exp_{q}(-x))^{-1}$. Using the
same ideas as in the proof of Lemma \ref{l-Phi}, it is easy to verify
the following lemma.
\begin{lem}
\label{l-Phiq} \leavevmode

\begin{enumerate}
\item [\normalfont{(a)}] Let $L$ be a composition of $n$. Then $\Phi_{q}(\mathbf{r}_{L})=\beta_{q}(L)x^{n}/[n]_{q}!$.
\item [\normalfont{(b)}] $\Phi_{q}(\mathbf{h}(1))=\exp_{q}(x)$.
\item [\normalfont{(c)}] $\Phi_{q}(\mathbf{e}(1))=\Exp_{q}(x)$.
\end{enumerate}
\end{lem}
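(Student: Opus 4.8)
The plan is to follow the exact pattern of the proof of Lemma~\ref{l-Phi}, substituting the $q$-analogues at each step. The three parts are essentially independent.

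For part (a), I would start from the expansion (\ref{e-ritoh}), namely $\mathbf{r}_L=\sum_{K\leq L}(-1)^{l(L)-l(K)}\mathbf{h}_K$, apply $\Phi_q$ term-by-term (using that $\Phi_q$ is a linear homomorphism), invoke the computation $\Phi_q(\mathbf{h}_K)=\binom{n}{K}_{\!q}\,x^n/[n]_q!$ recorded just before Lemma~\ref{l-Phi}, and then recognize the resulting alternating sum $\sum_{K\leq L}(-1)^{l(L)-l(K)}\binom{n}{K}_{\!q}$ as $\beta_q(L)$ by formula (\ref{e-betaqL}) of Lemma~\ref{l-Despre}(b). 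This is the same three-line computation as in part (a) of Lemma~\ref{l-Phi}, with $\binom{n}{K}$ replaced by $\binom{n}{K}_{\!q}$, $n!$ by $[n]_q!$, and $\beta$ by $\beta_q$.

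For part (b), I would simply write $\Phi_q(\mathbf{h}(1))=\Phi_q\big(\sum_{n=0}^\infty\mathbf{h}_n\big)=\sum_{n=0}^\infty\Phi_q(\mathbf{h}_n)=\sum_{n=0}^\infty x^n/[n]_q!=\exp_q(x)$, directly from the definition $\Phi_q(\mathbf{h}_n)=x^n/[n]_q!$ and the definition of $\exp_q$. For part (c), I would use the identity (\ref{e-ehr}), i.e.\ $\mathbf{e}(x)=\mathbf{h}(-x)^{-1}$, specialized at $x=1$, so that $\Phi_q(\mathbf{e}(1))=\Phi_q(\mathbf{h}(-1)^{-1})=\big(\sum_{n=0}^\infty(-1)^n\Phi_q(\mathbf{h}_n)\big)^{-1}=\big(\sum_{n=0}^\infty(-x)^n/[n]_q!\big)^{-1}=(\exp_q(-x))^{-1}$, and then invoke the stated fact that $\Exp_q(x)=(\exp_q(-x))^{-1}$ to conclude $\Phi_q(\mathbf{e}(1))=\Exp_q(x)$. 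One should note that $\Phi_q$ is an algebra homomorphism, so it commutes with taking inverses of invertible power series, justifying the step $\Phi_q(\mathbf{h}(-1)^{-1})=\Phi_q(\mathbf{h}(-1))^{-1}$.

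The only point requiring a moment of care---the ``main obstacle,'' though it is a mild one---is part (c): one must be sure that applying $\Phi_q$ to the inverse of a power series in $\mathbf{Sym}[[x]]$ yields the inverse of the image, which holds because $\Phi_q$ is a continuous $F$-algebra homomorphism into $F(q)[[x]]$ and $\mathbf{h}(-1)$ has invertible constant term $\mathbf{h}_0=1$. Everything else is a routine transcription of the proof of Lemma~\ref{l-Phi} with $n!\rightsquigarrow[n]_q!$, and I would accordingly state the proof very briefly, as the excerpt itself suggests (``Using the same ideas as in the proof of Lemma~\ref{l-Phi}, it is easy to verify the following lemma'').
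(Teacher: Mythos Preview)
Your proposal is correct and is exactly the approach the paper intends: the paper does not write out a proof at all, only remarking that one uses ``the same ideas as in the proof of Lemma~\ref{l-Phi},'' and your three-part transcription with $n!\rightsquigarrow[n]_q!$, $\binom{n}{K}\rightsquigarrow\binom{n}{K}_{\!q}$, $e^x\rightsquigarrow\exp_q(x)$, and the identity $\Exp_q(x)=(\exp_q(-x))^{-1}$ is precisely that. Your added remark that $\Phi_q$ is an algebra homomorphism (so it commutes with taking inverses of units) is the one nontrivial justification, and it is appropriate to mention.
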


\section{Main results}

\subsection{Peaks and descents}

Consider the polynomial 
\[
P_{n}^{(\pk,\des)}(y,t)\coloneqq\sum_{\pi\in\mathfrak{S}_{n}}y^{\pk(\pi)+1}t^{\des(\pi)+1}
\]
which refines the Eulerian polynomial $A_{n}(t)$ and the peak polynomial
$P_{n}^{\pk}(t)$. We prove in our first theorem an identity expressing
$P_{n}^{(\pk,\des)}(y,t)$ in terms of $A_{n}(t)$. To do so, we need
to establish the following noncommutative symmetric function identity.
\begin{lem}
\textup{\label{l-pkdesncsf}
\begin{multline*}
(1-t\mathbf{e}(yx)\mathbf{h}(x))^{-1}=\\
\frac{1}{1-t}+\sum_{n=1}^{\infty}\sum_{L\vDash n}\frac{t^{\pk(L)+1}(y+t)^{\des(L)-\pk(L)}(1+yt)^{n-\pk(L)-\des(L)-1}(1+y)^{2\pk(L)+1}}{(1-t)^{n+1}}x^{n}\mathbf{r}_{L}
\end{multline*}
}
\end{lem}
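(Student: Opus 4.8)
The plan is to expand the left‑hand side as a geometric series
\[
(1-t\mathbf{e}(yx)\mathbf{h}(x))^{-1}=\sum_{k=0}^{\infty}t^{k}\bigl(\mathbf{e}(yx)\mathbf{h}(x)\bigr)^{k},
\]
which is legitimate as formal power series in $x$ and $t$ since $t\mathbf{e}(yx)\mathbf{h}(x)$ is divisible by $t$ (the coefficients will come out to be the rational functions of $t$ appearing in the statement). Since $\mathbf{h}(x)$ is the noncommutative generating function for weakly increasing words and $\mathbf{e}(yx)$ for strictly decreasing words, with $x$ marking length and, in the latter, $y$ marking length as well, the product $\mathbf{e}(yx)\mathbf{h}(x)$ is the generating function for ordered pairs $(u,v)$ with $u$ strictly decreasing and $v$ weakly increasing, weighted by $y^{|u|}x^{|uv|}$, and $\bigl(\mathbf{e}(yx)\mathbf{h}(x)\bigr)^{k}$ is the generating function for sequences $(u_{1},v_{1}),\dots,(u_{k},v_{k})$ of such pairs with weight $t^{k}y^{|u_{1}|+\cdots+|u_{k}|}x^{n}$, $n=|u_{1}v_{1}\cdots u_{k}v_{k}|$. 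Concatenating gives a word $w=u_{1}v_{1}\cdots u_{k}v_{k}$; such a factorization is legal precisely when the internal gaps of each $u_{i}$ are descents of $w$ and those of each $v_{i}$ are non‑descents, so the weighted number of legal factorizations of $w$ depends only on $\Comp(w)$. Writing $N(L)$ for this count when $\Comp(w)=L$, we obtain
\[
(1-t\mathbf{e}(yx)\mathbf{h}(x))^{-1}=\frac{1}{1-t}+\sum_{n\ge 1}\sum_{L\vDash n}N(L)\,x^{n}\mathbf{r}_{L},
\]
the $\tfrac{1}{1-t}$ being the empty‑word term (all pairs empty). It remains to identify $N(L)$.

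To compute $N(L)$, fix a word $w$ with $\Comp(w)=L$ and $|w|=n$, and record its ``gap word'' $g_{1}\cdots g_{n-1}\in\{A,D\}^{n-1}$, where $g_{j}=D$ if $j$ is a descent of $w$ and $g_{j}=A$ otherwise. I would read $w$ from left to right, running a two‑state transfer matrix recording whether the current letter lies in the $u$‑part or the $v$‑part of its pair; the freedom of interposing arbitrarily many empty pairs $(u,v)$ (with $u$ and $v$ both empty), which may occur only between pairs and at the two ends, is folded in by attaching a factor $\tfrac{1}{1-t}$ to every transition that opens a new pair and to the two ends. A short check of which moves are legal — extending the $u$‑part needs a descent gap, extending the $v$‑part needs a non‑descent gap, at a descent gap one may pass from the $u$‑part to the $v$‑part of the same pair, and one may always close the current pair and open a new one at the cost of a factor $t$ — gives
\[
N(L)=\frac{t}{(1-t)^{2}}\,(y,\,1)\,M_{g_{1}}M_{g_{2}}\cdots M_{g_{n-1}}\binom{1}{1},\qquad
M_{D}=\frac{1}{1-t}\begin{pmatrix}y&1\\ yt&t\end{pmatrix},\quad
M_{A}=\frac{1}{1-t}\begin{pmatrix}yt&1\\ yt&1\end{pmatrix}.
\]

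To evaluate this, observe that the row vectors $e_{1}:=(y,1)$ and $e_{2}:=(yt,1)$ are adapted to $M_{A}$ and $M_{D}$:
\[
e_{1}M_{D}=\frac{y+t}{1-t}e_{1},\quad e_{1}M_{A}=\frac{1+y}{1-t}e_{2},\quad e_{2}M_{D}=\frac{t(1+y)}{1-t}e_{1},\quad e_{2}M_{A}=\frac{1+yt}{1-t}e_{2}.
\]
Since we start at $e_{1}$, the partial product after reading $g_{1}\cdots g_{j}$ is a scalar multiple of $e_{1}$ when $g_{j}=D$ and of $e_{2}$ when $g_{j}=A$, and each gap contributes exactly one of the four displayed proportionality factors; dotting the final vector with $\binom{1}{1}$ contributes $1+y$ if $g_{n-1}=D$ (or $n=1$) and $1+yt$ otherwise. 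Collecting and counting these contributions, using $\pk(L)=\#\{j:g_{j}g_{j+1}=AD\}$, $\val(L)=\#\{j:g_{j}g_{j+1}=DA\}$, and the identity $(\text{number of maximal descent blocks})=\pk(L)+[g_{1}=D]=\val(L)+[g_{n-1}=D]$ (where $[P]$ is $1$ if $P$ holds, else $0$) to eliminate the boundary indicators, one finds that $\tfrac{y+t}{1-t}$ occurs $\des(L)-\pk(L)$ times, $\tfrac{t(1+y)}{1-t}$ occurs $\pk(L)$ times, the total $(1+y)$‑exponent is $2\pk(L)+1$, and the total $(1+yt)$‑exponent is $n-\pk(L)-\des(L)-1$; combined with the prefactor $\tfrac{t}{(1-t)^{2}}$ and the $n-1$ factors $\tfrac{1}{1-t}$ from the matrices, this is exactly
\[
N(L)=\frac{t^{\pk(L)+1}(y+t)^{\des(L)-\pk(L)}(1+yt)^{n-\pk(L)-\des(L)-1}(1+y)^{2\pk(L)+1}}{(1-t)^{n+1}},
\]
as claimed.

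The step I expect to be the real obstacle is pinning down the transfer matrices: empty pairs may be interposed only \emph{between} pairs (so the geometric‑series factor $\tfrac{1}{1-t}$ must attach only to ``open a new pair'' transitions and to the two ends, never uniformly to a gap), and at a descent gap the options ``end the $u$‑part of this pair'' and ``open a new pair'' are both available (and land in possibly different states), so one must be careful not to drop either. Once $M_{A}$ and $M_{D}$ are correct, the eigenvector identities and the counting of exponents are routine.
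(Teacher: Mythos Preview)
Your argument is correct and arrives at the same combinatorial model as the paper: a factorization $w=u_{1}v_{1}\cdots u_{k}v_{k}$ with decreasing $u_{i}$ and weakly increasing $v_{i}$ is exactly what the paper calls a \emph{peak word} (underlined letters $=u$-letters, bars $=$ pair boundaries). The difference is in how the coefficient of $\mathbf{r}_{L}$ is extracted. The paper isolates \emph{minimal} peak words (those with no removable bar), gives a direct five-step bijective count yielding the numerator, and then multiplies by $(1-t)^{-(n+1)}$ for the extra bars. You instead run a two-state transfer matrix over the ascent/descent word of $L$ and exploit the lucky fact that $(y,1)$ and $(yt,1)$ are interchanged up to scalar by $M_{A}$ and $M_{D}$, which collapses the matrix product to a product of scalars indexed by consecutive gap pairs. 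Your route is more mechanical and generalizes readily to other products of the form $(1-t\,\mathbf{f}\mathbf{g})^{-1}$, while the paper's minimal-object argument is more transparently bijective and makes the role of each factor in the numerator visible step by step. Your caution about setting up $M_{A}$ and $M_{D}$ is well placed; I checked the entries and the four eigenvector relations and they are all correct, as is the final bookkeeping via $\pk(L)+[g_{1}=D]=\val(L)+[g_{n-1}=D]$.
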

\begin{proof}
Let $\underline{\mathbb{P}}=\{\underline{1},\underline{2},\underline{3},\dots\}$
denote the set of positive integers decorated with underlines, endowed
with the usual total ordering of $\mathbb{P}$. Let us say that a
word $w$ on the alphabet $\mathbb{P}\cup\underline{\mathbb{P}}\cup\{|\}$
(that is, the positive integers, underlined positive integers, and
a vertical bar) is a \textit{peak word} if $w$ can be written as
a sequence of subwords of the form $w_{1}w_{2}|$ where $w_{1}$ is
a (possibly empty) strictly decreasing word containing only letters
from $\underline{\mathbb{P}}$ and $w_{2}$ is a (possibly empty)
weakly increasing word containing only letters from $\mathbb{P}$.
For example, 
\begin{equation}
\underline{8642}11|457|\underline{931}||\underline{1}2338|56|||\underline{942}788|\label{e-pkword}
\end{equation}
is a peak word. It is clear that the left-hand side of the given equation
counts peak words where $t$ is weighting the number of bars, $y$
is weighting the number of underlined letters, and $x$ is weighting
the length of the underlying $\mathbb{P}$-word. We want to show that
the right-hand side also counts peak words with the same weights.

Let us say that a peak word is \textit{minimal} if it is impossible
to remove bars from it to yield a peak word. Given a word in $\mathbb{P}$,
there is a unique minimal peak word corresponding to every possible
choice of underlines. Indeed, if $w$ is a word in $\mathbb{P}$ with
a given choice of underlines (that is, if $w$ is a word in $\mathbb{P}\cup\underline{\mathbb{P}}$),
then a minimal peak word corresponding to $w$ must have no bar at
the beginning and a bar at the end, and whether or not there needs
to be a bar between two letters $a$ and $b$ is completely determined
by whether $a>b$, whether $a$ is underlined, and whether $b$ is
underlined. Moreover, adding bars to a peak word yields another peak
word, so every peak word can be obtained from a unique minimal peak
word by adding bars. For example, the minimal peak word corresponding
to 
\[
\underline{8642}11457\underline{9311}233856\underline{942}788
\]
is 
\[
\underline{8642}11457|\underline{931}|\underline{1}2338|56|\underline{942}788|,
\]
which is the unique minimal peak word from which we can obtain (\ref{e-pkword})
as they share the same underlying $\mathbb{P}$-word and choice of
underlines.

We show that 
\begin{equation}
t(t+yt)^{\pk(L)}(1+y)^{\pk(L)+1}(y+t)^{\des(L)-\pk(L)}(1+yt)^{n-\des(L)-\pk(L)-1}x^{n}\mathbf{r}_{L}\label{e-pkncsfn}
\end{equation}
counts nonempty minimal peak words with descent composition $L\vDash n$.
Every term in $\mathbf{r}_{L}$ corresponds to a $\mathbb{P}$-word
with descent composition $L$, and we give it a choice of underlines
and insert necessary bars. As our working example, take the $\mathbb{P}$-word
$11375438876544579756673$.

\begin{enumerate}
\item There must be a bar at the end, hence the initial factor $t$:
\[
11375438876544579756673|.
\]
\item For each letter corresponding to a peak, we choose whether or not
to underline it. If we do underline it, then we insert a bar immediately
before it; otherwise, we insert a bar immediately after it. This corresponds
to the $(t+yt)^{\pk(L)}$ factor. For example, we may have
\[
113|\underline{7}54388|7654457|\underline{9}7566|\underline{7}3|.
\]
\item The above step divides our word into $\pk(L)+1$ segments, separated
by bars. Take the left-most smallest letter of each segment and choose
whether or not to underline it; this gives the $(1+y)^{\pk(L)+1}$
factor. For example, we may have 
\[
\underline{1}13|\underline{7}54\underline{3}88|7654457|\underline{9}7\underline{5}66|\underline{7}3|.
\]
Note that this step determines whether the left-most smallest letter
in each segment is to be part of the underlined decreasing subword
or the non-underlined weakly increasing subword.
\item Take each letter corresponding to a descent that is not a peak and
choose to either underline it or to add a bar after it; this gives
$(y+t)^{\des(L)-\pk(L)}$. For example, we may have 
\[
\underline{1}13|\underline{75}4|\underline{3}88|\underline{765}4457|\underline{9}7|\underline{5}66|\underline{7}3|.
\]
This step eliminates instances of underlined letters separated by
non-underlined letters in the same segment, and it is evident that
this gives the minimal peak word corresponding to our current choice
of underlines.
\item Finally, iterate through every letter that is (a) not the final letter
of the word, (b) not corresponding to a descent, and (c) not followed
immediately by a letter corresponding to a peak, and choose either
to do nothing or to underline the next letter and add a bar in between
the two letters; this gives $(1+yt)^{n-\des(L)-\pk(L)-1}$. For example,
we may have 
\[
\underline{1}1|\underline{3}|\underline{75}4|\underline{3}|\underline{8}8|\underline{765}4457|\underline{9}7|\underline{5}6|\underline{6}|\underline{7}3|.
\]
Note that adding these underlines requires the corresponding bars
to be placed, so the result is still a minimal peak word.
\end{enumerate}
Through these steps, we have considered whether to underline each
letter in the word, so in fact (\ref{e-pkncsfn}) accounts for the
unique minimal peak word corresponding to each choice of underlines,
and thus counts all minimal peak words with descent composition $L\vDash n$.

Observe that (\ref{e-pkncsfn}) is equal to 
\[
t^{\pk(L)+1}(y+t)^{\des(L)-\pk(L)}(1+yt)^{n-\pk(L)-\des(L)-1}(1+y)^{2\pk(L)+1}x^{n}\mathbf{r}_{L},
\]
which appears in the statement of this lemma. Dividing by $(1-t)^{n+1}$
corresponds to inserting any number of bars in the $n+1$ possible
positions, which allows us to move from nonempty minimal peak words
to all peak words except those that only consist of bars, which are
accounted for by the $1/(1-t)$ term at the beginning. Hence the lemma
is proven.
\end{proof}
We remark that Lemma \ref{l-pkdesncsf} (as well as the noncommutative
symmetric function identities stated in the next several subsections)
can also be proven using the present author's ``generalized run theorem''
\cite{Zhuang2016} and making appropriate substitutions, but we prefer
the combinatorial proof given above.

The following is our main result relating the Eulerian polynomials
and the $(\pk,\des)$ polynomials.
\begin{thm}
\label{t-pkdes} For $n\geq1$, we have
\begin{equation}
A_{n}(t)=\left(\frac{1+yt}{1+y}\right)^{n+1}P_{n}^{(\pk,\des)}\left(\frac{(1+y)^{2}t}{(y+t)(1+yt)},\frac{y+t}{1+yt}\right).\label{e-apkdes}
\end{equation}
Equivalently, 
\begin{equation}
P_{n}^{(\pk,\des)}(y,t)=\left(\frac{1+u}{1+uv}\right)^{n+1}A_{n}(v)\label{e-pkdesa}
\end{equation}
where $u=\frac{1+t^{2}-2yt-(1-t)\sqrt{(1+t)^{2}-4yt}}{2(1-y)t}$ and
$v=\frac{(1+t)^{2}-2yt-(1+t)\sqrt{(1+t)^{2}-4yt}}{2yt}.$
\end{thm}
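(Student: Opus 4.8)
The plan is to apply the homomorphism $\Phi:\mathbf{Sym}\rightarrow F[[x]]$ of Subsection 3.2 to the identity of Lemma \ref{l-pkdesncsf} and then extract the coefficient of $x^{n}/n!$. Since $\Phi$ is a continuous $F$-algebra homomorphism with $\Phi(\mathbf{h}(x))=e^{x}$ and $\Phi(\mathbf{e}(yx))=e^{yx}$ (the latter because $\mathbf{e}(x)=\mathbf{h}(-x)^{-1}$ by (\ref{e-ehr}), cf.\ Lemma \ref{l-Phi}(b),(c)), the left-hand side of Lemma \ref{l-pkdesncsf} is mapped to $(1-te^{yx}e^{x})^{-1}=\frac{1}{1-te^{(1+y)x}}$. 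On the right-hand side I would apply $\Phi$ termwise, using $\Phi(\mathbf{r}_{L})=\beta(L)x^{n}/n!$ for $L\vDash n$ (Lemma \ref{l-Phi}(a)) together with $A_{n}(t)=\sum_{L\vDash n}\beta(L)t^{\des(L)+1}$ and $P_{n}^{(\pk,\des)}(y,t)=\sum_{L\vDash n}\beta(L)y^{\pk(L)+1}t^{\des(L)+1}$.

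The crux is an algebraic simplification of the rational coefficient in Lemma \ref{l-pkdesncsf}. Writing $p=\pk(L)$ and $d=\des(L)$, one checks that
\[
t^{p+1}(y+t)^{d-p}(1+yt)^{n-p-d-1}(1+y)^{2p+1}=t(1+y)(1+yt)^{n-1}\,Y^{p}\,T^{d},
\]
where $Y=\frac{(1+y)^{2}t}{(y+t)(1+yt)}$ and $T=\frac{y+t}{1+yt}$. Consequently, since $YT=(1+y)^{2}t/(1+yt)^{2}$, the coefficient of $x^{n}/n!$ in $\Phi$ applied to the right-hand side is
\[
\frac{t(1+y)(1+yt)^{n-1}}{(1-t)^{n+1}}\sum_{L\vDash n}\beta(L)Y^{p}T^{d}=\frac{(1+yt)^{n+1}}{(1+y)(1-t)^{n+1}}\,P_{n}^{(\pk,\des)}(Y,T).
\]

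Next I would compare with the Eulerian generating function $\sum_{n\geq0}A_{n}(t)x^{n}/n!=(1-t)/(1-te^{(1-t)x})$ under the substitution $x\mapsto\frac{1+y}{1-t}x$, which rewrites $\frac{1}{1-te^{(1+y)x}}$ as $\frac{1}{1-t}\sum_{n\geq0}A_{n}(t)(\frac{1+y}{1-t})^{n}\frac{x^{n}}{n!}$; equating coefficients of $x^{n}/n!$ for $n\geq1$ then gives $(1+y)^{n}A_{n}(t)=\frac{(1+yt)^{n+1}}{1+y}P_{n}^{(\pk,\des)}(Y,T)$, which rearranges exactly to (\ref{e-apkdes}). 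For the equivalent form (\ref{e-pkdesa}), I would invert the change of variables: writing $v$ and $u$ for the quantities playing the roles of $t$ and $y$, the relations $t=\frac{u+v}{1+uv}$ and $y=\frac{(1+u)^{2}v}{(u+v)(1+uv)}$ must be solved for $u$ and $v$ in terms of $y$ and $t$; eliminating one variable yields a quadratic with discriminant $(1+t)^{2}-4yt$, whose appropriate root gives the stated closed forms, while the prefactor $(\frac{1+yt}{1+y})^{-(n+1)}$ becomes $(\frac{1+u}{1+uv})^{n+1}$.

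I expect the main obstacle to be purely computational: verifying the displayed factorization of the Lemma \ref{l-pkdesncsf} coefficient into $Y^{p}T^{d}$ times an $L$-independent factor, and then performing the variable elimination cleanly (in particular, selecting the branch of the square root for which $v=t$ and $u=0$ at the specialization $y=1$, so that (\ref{e-pkdesa}) recovers $A_{n}(t)=A_{n}(t)$ there). All of the genuinely combinatorial content is already carried by Lemma \ref{l-pkdesncsf}, so the theorem is essentially a matter of applying $\Phi$ and bookkeeping.
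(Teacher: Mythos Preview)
Your proposal is correct and follows essentially the same route as the paper's proof: apply $\Phi$ to Lemma \ref{l-pkdesncsf}, recognize the left side as $(1-te^{(1+y)x})^{-1}$, rewrite the right side so that the summand becomes a constant times $Y^{\pk(L)}T^{\des(L)}$ with $Y=\frac{(1+y)^{2}t}{(y+t)(1+yt)}$ and $T=\frac{y+t}{1+yt}$, and compare with the Eulerian exponential generating function via the substitution $x\leftrightarrow \frac{1-t}{1+y}x$. The only cosmetic difference is that the paper first sets $x=1$ in Lemma \ref{l-pkdesncsf} before applying $\Phi$ (so the grading variable $x$ in the lemma and the variable $x$ in $F[[x]]$ are not conflated), and it performs the substitution on the $\Phi$-image rather than on the Eulerian series; your factorization of the coefficient into $t(1+y)(1+yt)^{n-1}Y^{p}T^{d}$ is exactly the rearrangement the paper carries out implicitly.
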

Note that evaluating (\ref{e-apkdes}) at $y=1$ recovers Stembridge's
identity 
\[
A_{n}(t)=\left(\frac{1+t}{2}\right)^{n+1}P_{n}^{\pk}\left(\frac{4t}{(1+t)^{2}}\right)
\]
mentioned in the introduction of this paper.
\begin{proof}
Taking Lemma \ref{l-pkdesncsf}, evaluating at $x=1$, and applying
the homomorphism $\Phi$ yields 
\begin{multline*}
\frac{1}{1-te^{(1+y)x}}=\\
\frac{1}{1-t}+\sum_{n=1}^{\infty}\sum_{\pi\in\mathfrak{S}_{n}}\frac{t^{\pk(\pi)+1}(y+t)^{\des(\pi)-\pk(\pi)}(1+yt)^{n-\pk(\pi)-\des(\pi)-1}(1+y)^{2\pk(\pi)+1}}{(1-t)^{n+1}}\frac{x^{n}}{n!}
\end{multline*}
by Lemma \ref{l-Phi}. Rearranging some terms yields 
\begin{multline*}
\frac{1}{1-te^{(1+y)x}}=\\
\frac{1}{1-t}+\sum_{n=1}^{\infty}\sum_{\pi\in\mathfrak{S}_{n}}\frac{1}{1+y}\left(\frac{1+yt}{1-t}\right)^{n+1}\left(\frac{(1+y)^{2}t}{(y+t)(1+yt)}\right)^{\pk(\pi)+1}\left(\frac{y+t}{1+yt}\right)^{\des(\pi)+1}\frac{x^{n}}{n!}.
\end{multline*}
Multiplying both sides by $1-t$ and then replacing $x$ by $(1-t)x/(1+y)$
yields 
\[
\frac{1-t}{1-te^{(1-t)x}}=1+\sum_{n=1}^{\infty}\sum_{\pi\in\mathfrak{S}_{n}}\left(\frac{1+yt}{1+y}\right)^{n+1}\left(\frac{(1+y)^{2}t}{(y+t)(1+yt)}\right)^{\pk(\pi)+1}\left(\frac{y+t}{1+yt}\right)^{\des(\pi)+1}\frac{x^{n}}{n!}.
\]
Note that the left-hand side is the exponential generating function
for the Eulerian polynomials; thus equating the coefficients of $x^{n}/n!$
gives (\ref{e-apkdes}). 

Finally, (\ref{e-pkdesa}) can be obtained by setting $u=\frac{(1+y)^{2}t}{(y+t)(1+yt)}$
and $v=\frac{y+t}{1+yt}$, solving for $y$ and $t$ (which can be
done using a computer algebra system such as Maple), and simplifying.\footnote{We exchanged $u$ and $v$ with $y$ and $t$, respectively, in the
statement of (\ref{e-pkdesa}) in this theorem, so that the $(\pk,\des)$
polynomial would have variables $y$ and $t$ as in its definition.
The same is done for all subsequent results involving similar substitutions.}
\end{proof}
We give a combinatorial proof of this result using the modified Foata--Strehl
action in Subsection 5.1.

Next, we obtain a similar result for the $q$-analogue of the $(\pk,\des)$
polynomial 
\[
P_{n}^{(\inv,\pk,\des)}(q,y,t)\coloneqq\sum_{\pi\in\mathfrak{S}_{n}}q^{\inv(\pi)}y^{\pk(\pi)+1}t^{\des(\pi)+1}
\]
also keeping track of the inversion number.
\begin{thm}
\label{t-pkdesq} We have \textup{
\begin{multline}
\frac{1-t}{1-t\Exp_{q}(yx)\exp_{q}(x)}=\\
1+\sum_{n=1}^{\infty}\frac{(1+yt)^{n+1}}{(1+y)(1-t)^{n}}P_{n}^{(\inv,\pk,\des)}\left(q,\frac{(1+y)^{2}t}{(y+t)(1+yt)},\frac{y+t}{1+yt}\right)\frac{x^{n}}{[n]_{q}!}.\label{e-qpkdes1}
\end{multline}
}Equivalently, 
\[
\sum_{n=1}^{\infty}P_{n}^{(\inv,\pk,\des)}(q,y,t)\frac{x^{n}}{[n]_{q}!}=\frac{v(1+u)}{1+uv}\frac{\Exp_{q}\left(\frac{u(1-v)}{1+uv}x\right)\exp_{q}\left(\frac{1-v}{1+uv}x\right)-1}{1-v\Exp_{q}\left(\frac{u(1-v)}{1+uv}x\right)\exp_{q}\left(\frac{1-v}{1+uv}x\right)}
\]
where $u=\frac{1+t^{2}-2yt-(1-t)\sqrt{(1+t)^{2}-4yt}}{2(1-y)t}$ and
$v=\frac{(1+t)^{2}-2yt-(1+t)\sqrt{(1+t)^{2}-4yt}}{2yt}.$ 
\end{thm}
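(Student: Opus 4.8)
The plan is to repeat the proof of Theorem~\ref{t-pkdes} almost verbatim, with the homomorphism $\Phi$ replaced by $\Phi_q$. The one preliminary point to settle is the effect of $\Phi_q$ on $\mathbf{e}(y)=\sum_{n\geq0}\mathbf{e}_n y^n$. Since $\Phi_q$ is an algebra homomorphism with $\Phi_q(\mathbf{h}_L)={n \choose L}_{\!q}\,x^n/[n]_q!$ for $L\vDash n$, it carries $\mathbf{Sym}_n$ into $F(q)\,x^n$; combined with $\Phi_q(\mathbf{e}(1))=\Exp_q(x)$ from Lemma~\ref{l-Phiq}(c) this forces $\Phi_q(\mathbf{e}_n)=q^{{n \choose 2}}x^n/[n]_q!$, hence $\Phi_q(\mathbf{e}(y))=\Exp_q(yx)$. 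Using also $\Phi_q(\mathbf{h}(1))=\exp_q(x)$ and $\Phi_q(\mathbf{r}_L)=\beta_q(L)\,x^n/[n]_q!$ from Lemma~\ref{l-Phiq}, I would evaluate Lemma~\ref{l-pkdesncsf} at $x=1$ and apply $\Phi_q$: the left-hand side becomes $\big(1-t\Exp_q(yx)\exp_q(x)\big)^{-1}$, while on the right-hand side $\sum_{L\vDash n}(\cdots)\mathbf{r}_L$ becomes $\sum_{\pi\in\mathfrak{S}_n}q^{\inv(\pi)}(\cdots)\,x^n/[n]_q!$, because $\beta_q(L)$ is precisely the $q^{\inv}$-enumerator of the permutations with descent composition $L$ and the coefficient $(\cdots)$ depends on $\pi$ only through $\Comp(\pi)$. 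Rewriting that coefficient as
\[
\frac{1}{1+y}\left(\frac{1+yt}{1-t}\right)^{n+1}\left(\frac{(1+y)^{2}t}{(y+t)(1+yt)}\right)^{\pk(\pi)+1}\left(\frac{y+t}{1+yt}\right)^{\des(\pi)+1},
\]
exactly as in the proof of Theorem~\ref{t-pkdes}, recognizing the inner sum as $P_n^{(\inv,\pk,\des)}$ evaluated at $\big(q,\tfrac{(1+y)^{2}t}{(y+t)(1+yt)},\tfrac{y+t}{1+yt}\big)$, and multiplying through by $1-t$ gives the first displayed identity (\ref{e-qpkdes1}).

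For the equivalent closed form, I would invert the substitution $(y,t)\mapsto\big(\tfrac{(1+y)^{2}t}{(y+t)(1+yt)},\tfrac{y+t}{1+yt}\big)$ in (\ref{e-qpkdes1}) --- this is exactly the change of variables performed with a computer algebra system in the proof of Theorem~\ref{t-pkdes}, and it produces the same $u$ and $v$ as in the statement; after the substitution the argument pair of $P_n^{(\inv,\pk,\des)}$ becomes a free pair $(y,t)$ while the old $y$ and $t$ are replaced by $u$ and $v$ in every other factor. The step special to the $q$-setting is a simultaneous rescaling $x\mapsto\tfrac{1-v}{1+uv}x$, chosen so that the $n$-dependent prefactor $\tfrac{(1+uv)^{n+1}}{(1+u)(1-v)^{n}}$ of $x^n/[n]_q!$ collapses, once combined with the rescaled $x^n$, to the single constant $\tfrac{1+uv}{1+u}$. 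After this, (\ref{e-qpkdes1}) reads
\[
\frac{1-v}{1-v\,\Exp_q\big(\tfrac{u(1-v)}{1+uv}x\big)\exp_q\big(\tfrac{1-v}{1+uv}x\big)}=1+\frac{1+uv}{1+u}\sum_{n\geq1}P_n^{(\inv,\pk,\des)}(q,y,t)\,\frac{x^n}{[n]_q!},
\]
and solving for the sum, using $\tfrac{1-v}{1-vZ}-1=\tfrac{v(Z-1)}{1-vZ}$ with $Z=\Exp_q(\cdots)\exp_q(\cdots)$, produces exactly the stated formula.

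The only genuinely new ingredient beyond the proof of Theorem~\ref{t-pkdes} is the observation that $\Phi_q$ respects the grading, so that $\Phi_q(\mathbf{e}_n)=q^{{n \choose 2}}x^n/[n]_q!$ and therefore $\mathbf{e}(y)\mathbf{h}(x)$ maps under $\Phi_q$ to $\Exp_q(yx)\exp_q(x)$; everything else is routine transcription. I do not expect a real obstacle here: the algebraic inversion producing $u$ and $v$ is identical to the one already needed for Theorem~\ref{t-pkdes}, and the main thing to be careful about is fixing the rescaling factor $\tfrac{1-v}{1+uv}$ so that the $n$-dependence of the prefactor cancels cleanly.
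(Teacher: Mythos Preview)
Your proposal is correct and follows essentially the same route as the paper: apply $\Phi_q$ in place of $\Phi$ to Lemma~\ref{l-pkdesncsf} (using Lemma~\ref{l-Phiq}), rearrange to obtain (\ref{e-qpkdes1}), then rescale $x$ by $(1-t)/(1+yt)$ and invert the change of variables exactly as in Theorem~\ref{t-pkdes}. Your explicit remark that the grading forces $\Phi_q(\mathbf{e}_n)=q^{\binom{n}{2}}x^n/[n]_q!$, hence $\Phi_q(\mathbf{e}(y))=\Exp_q(yx)$, is a useful clarification that the paper leaves implicit in its citation of Lemma~\ref{l-Phiq}.
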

\begin{proof}
We follow the proof of Theorem \ref{t-pkdes}, but apply the homomorphism
$\Phi_{q}$ instead of $\Phi$, which by Lemma \ref{l-Phiq} yields
\begin{multline*}
\frac{1}{1-t\Exp_{q}(yx)\exp_{q}(x)}=\frac{1}{1-t}\\
+\sum_{n=1}^{\infty}\sum_{\pi\in\mathfrak{S}_{n}}q^{\inv(\pi)}\frac{t^{\pk(\pi)+1}(y+t)^{\des(\pi)-\pk(\pi)}(1+yt)^{n-\pk(\pi)-\des(\pi)-1}(1+y)^{2\pk(\pi)+1}}{(1-t)^{n+1}}\frac{x^{n}}{[n]_{q}!}.
\end{multline*}
Multiplying both sides by $1-t$ and rearranging some terms yields
(\ref{e-qpkdes1}).

Next, we replace $x$ by $(1-t)x/(1+yt)$ to get 
\begin{multline*}
\frac{1-t}{1-t\Exp_{q}\left(\frac{y(1-t)}{1+yt}x\right)\exp_{q}\left(\frac{1-t}{1+yt}x\right)}=\\
1+\frac{1+yt}{1+y}\sum_{n=1}^{\infty}P_{n}^{(\inv,\pk,\des)}\left(q,\frac{(1+y)^{2}t}{(y+t)(1+yt)},\frac{y+t}{1+yt}\right)\frac{x^{n}}{[n]_{q}!}.
\end{multline*}
Making the same substitutions yields 
\[
1+\frac{1+uv}{1+u}\sum_{n=1}^{\infty}P_{n}^{(\inv,\pk,\des)}(q,y,t)\frac{x^{n}}{[n]_{q}!}=\frac{1-v}{1-v\Exp_{q}\left(\frac{u(1-v)}{1+uv}x\right)\exp_{q}\left(\frac{1-v}{1+uv}x\right)}
\]
where $u=\frac{1+t^{2}-2yt-(1-t)\sqrt{(1+t)^{2}-4yt}}{2(1-y)t}$ and
$v=\frac{(1+t)^{2}-2yt-(1+t)\sqrt{(1+t)^{2}-4yt}}{2yt}.$ Subtracting
both sides by 1 and dividing by $(1+uv)/(1+u)$ completes the proof.
\end{proof}
Unfortunately, we cannot express $P_{n}^{(\inv,\pk,\des)}(q,y,t)$
in terms of the $q$-Eulerian polynomial 
\[
A_{n}(q,t)\coloneqq\sum_{\pi\in\mathfrak{S}_{n}}q^{\inv(\pi)}t^{\des(\pi)+1},
\]
but we can recover the known $q$-exponential generating function
for the $q$-Eulerian polynomials from the above result.
\begin{cor}
\textup{\label{c-qegf}
\begin{align*}
\sum_{n=0}^{\infty}A_{n}(q,t)\frac{x^{n}}{[n]_{q}!} & =\frac{1-t}{1-t\exp_{q}((1-t)x)}
\end{align*}
}
\end{cor}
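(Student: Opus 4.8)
The plan is to obtain Corollary~\ref{c-qegf} as the $y=0$ specialization of Theorem~\ref{t-pkdesq}; setting $y=0$ kills the peak statistic in $P_n^{(\inv,\pk,\des)}$ and reduces the triple statistic to the pair $(\inv,\des)$, whose generating polynomial is precisely $A_n(q,t)$. Concretely, I would start from identity~(\ref{e-qpkdes1}) and substitute $y=0$. On the left-hand side, since only the degree-zero term of $\Exp_q$ survives when its argument is $0$, we have $\Exp_q(0\cdot x)=1$, so the left-hand side becomes $\dfrac{1-t}{1-t\exp_q(x)}$. On the right-hand side, the prefactor $\dfrac{(1+yt)^{n+1}}{(1+y)(1-t)^{n}}$ becomes $\dfrac{1}{(1-t)^{n}}$, the first argument of $P_n^{(\inv,\pk,\des)}$ remains $q$, and the remaining two arguments simplify: at $y=0$ the expression $\dfrac{(1+y)^{2}t}{(y+t)(1+yt)}$ equals $1$ (the denominator $(y+t)(1+yt)$ specializes to $t\neq 0$, so no degeneracy occurs) and $\dfrac{y+t}{1+yt}$ equals $t$.

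Next I would record that feeding the value $1$ into the peak slot recovers the $q$-Eulerian polynomial, namely
\[
P_n^{(\inv,\pk,\des)}(q,1,t)=\sum_{\pi\in\mathfrak{S}_n}q^{\inv(\pi)}\,1^{\pk(\pi)+1}\,t^{\des(\pi)+1}=A_n(q,t).
\]
Combining this with the convention $A_0(q,t)=1$, the substituted version of~(\ref{e-qpkdes1}) reads
\[
\frac{1-t}{1-t\exp_q(x)}=\sum_{n=0}^{\infty}\frac{A_n(q,t)}{(1-t)^{n}}\frac{x^{n}}{[n]_q!}.
\]
Finally, replacing $x$ by $(1-t)x$ turns $\dfrac{x^{n}}{(1-t)^{n}[n]_q!}$ into $\dfrac{x^{n}}{[n]_q!}$ on the right and $\exp_q(x)$ into $\exp_q((1-t)x)$ on the left, which is exactly the asserted identity.

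Every step is a direct substitution, so I do not anticipate a genuine obstacle; the only points that merit a line of verification are that $y=0$ is a legal specialization of~(\ref{e-qpkdes1}) (no denominators vanish, as noted above) and that $\Exp_q$ evaluates to $1$ at $0$. As an alternative one could instead put $y=0$ in Lemma~\ref{l-pkdesncsf}, where $\mathbf{e}(0)=\mathbf{e}_0=1$, and rerun the argument of Theorem~\ref{t-pkdesq} from the beginning, but specializing the finished identity is the shorter route.
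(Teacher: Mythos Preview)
Your proof is correct and follows essentially the same route as the paper: set $y=0$ in~(\ref{e-qpkdes1}) and then replace $x$ by $(1-t)x$. The paper states this in a single sentence, while you have spelled out the intermediate simplifications, but the argument is identical.
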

\begin{proof}
Take (\ref{e-qpkdes1}), set $y=0$, and replace $x$ with $(1-t)x$.
\end{proof}
Theorem \ref{t-pkdesq} also specializes to a corresponding result
for the $(\inv,\pk)$ polynomial 
\[
P_{n}^{(\inv,\pk)}(q,t)\coloneqq\sum_{\pi\in\mathfrak{S}_{n}}q^{\inv(\pi)}t^{\pk(\pi)+1}.
\]

\begin{cor}
\label{c-pkq} We have 
\begin{equation}
\frac{1-t}{1-t\Exp_{q}(x)\exp_{q}(x)}=1+\sum_{n=1}^{\infty}\frac{(1+t)^{n+1}}{2(1-t)^{n}}P_{n}^{(\inv,\pk)}\left(q,\frac{4t}{(1+t)^{2}}\right)\frac{x^{n}}{[n]_{q}!}.\label{e-qpeak}
\end{equation}
Equivalently, 
\begin{equation}
\sum_{n=1}^{\infty}P_{n}^{(\inv,\pk)}(q,t)\frac{x^{n}}{[n]_{q}!}=\frac{2v}{1+v}\frac{\Exp_{q}\left(\frac{1-v}{1+v}x\right)\exp_{q}\left(\frac{1-v}{1+v}x\right)-1}{1-v\Exp_{q}\left(\frac{1-v}{1+v}x\right)\exp_{q}\left(\frac{1-v}{1+v}x\right)}\label{e-qpeak2}
\end{equation}
where $v=\frac{2}{t}(1-\sqrt{1-t})-1$.
\end{cor}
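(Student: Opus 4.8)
The plan is to obtain both displayed identities as specializations of Theorem~\ref{t-pkdesq}, in the same spirit as Corollary~\ref{c-qegf}. For (\ref{e-qpeak}) I would set $y=1$ in equation (\ref{e-qpkdes1}). On the left-hand side this turns $\Exp_{q}(yx)\exp_{q}(x)$ into $\Exp_{q}(x)\exp_{q}(x)$, giving the left-hand side of (\ref{e-qpeak}). On the right-hand side, $(1+yt)^{n+1}/\bigl((1+y)(1-t)^{n}\bigr)$ becomes $(1+t)^{n+1}/\bigl(2(1-t)^{n}\bigr)$, the first argument $\frac{(1+y)^{2}t}{(y+t)(1+yt)}$ of $P_{n}^{(\inv,\pk,\des)}$ becomes $\frac{4t}{(1+t)^{2}}$, and the second argument $\frac{y+t}{1+yt}$ becomes $1$. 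Since setting the $\des$-weight equal to $1$ collapses the joint statistic, $P_{n}^{(\inv,\pk,\des)}(q,s,1)=\sum_{\pi\in\mathfrak{S}_{n}}q^{\inv(\pi)}s^{\pk(\pi)+1}=P_{n}^{(\inv,\pk)}(q,s)$, the right-hand side becomes exactly that of (\ref{e-qpeak}).

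For (\ref{e-qpeak2}) I would instead specialize the ``Equivalently'' form of Theorem~\ref{t-pkdesq}, this time at $t=1$ (killing the $\des$-variable rather than the $\pk$-variable), and then rename the remaining free variable $y$ as $t$ to match the notation of the corollary; the left-hand side $\sum_{n\geq1}P_{n}^{(\inv,\pk,\des)}(q,y,t)\,x^{n}/[n]_{q}!$ becomes $\sum_{n\geq1}P_{n}^{(\inv,\pk)}(q,t)\,x^{n}/[n]_{q}!$ by the same collapse. The key computation is the behaviour of $u$ and $v$ at $t=1$: there $(1+t)^{2}-4yt=4(1-y)$, so $u=\frac{2-2y}{2(1-y)}=1$ and $v=\frac{4-2y-4\sqrt{1-y}}{2y}=\frac{2}{y}(1-\sqrt{1-y})-1$, which after the renaming is precisely $v=\frac{2}{t}(1-\sqrt{1-t})-1$. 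Plugging $u=1$ into the right-hand side of the ``Equivalently'' form, the prefactor $\frac{v(1+u)}{1+uv}$ collapses to $\frac{2v}{1+v}$ and both exponential arguments $\frac{u(1-v)}{1+uv}x$ and $\frac{1-v}{1+uv}x$ collapse to $\frac{1-v}{1+v}x$, yielding exactly the right-hand side of (\ref{e-qpeak2}). Alternatively---and this most closely mirrors the proof of Theorem~\ref{t-pkdesq}---one could derive (\ref{e-qpeak2}) directly from (\ref{e-qpeak}) by replacing $x$ with $(1-t)x/(1+t)$, then substituting for $t$ via the inverse of $v=\frac{4t}{(1+t)^{2}}$ (namely $v=\frac{2}{t}(1-\sqrt{1-t})-1$), subtracting $1$, and dividing out the leftover prefactor.

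There is essentially no obstacle here; the whole argument is bookkeeping. The only point worth a sentence of justification is the legitimacy of the substitution $t=1$ in the ``Equivalently'' form: unlike (\ref{e-qpkdes1}), whose coefficients carry factors $(1-t)^{n}$ in denominators, the ``Equivalently'' form is an identity of power series in $x$ whose coefficients are algebraic functions of $q$, $y$, $t$ with no pole along $t=1$, so specializing there is valid; one takes the same branch of $\sqrt{(1+t)^{2}-4yt}$ as in Theorem~\ref{t-pkdesq} (at $t=1$ this is $2\sqrt{1-y}$ with the $+$ sign), which is exactly what makes $u=1$ and the displayed formula for $v$ come out.
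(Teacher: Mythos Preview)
Your proof is correct and matches the paper's approach: the paper also obtains (\ref{e-qpeak}) by setting $y=1$ in (\ref{e-qpkdes1}), and then derives (\ref{e-qpeak2}) from (\ref{e-qpeak}) via the substitution $x\mapsto x(1-t)/(1+t)$ and an inversion, i.e., exactly your ``alternatively'' route. Your primary derivation of (\ref{e-qpeak2}) by specializing the ``Equivalently'' form of Theorem~\ref{t-pkdesq} at $t=1$ is a perfectly good (and arguably cleaner) variant of the same computation.
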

\begin{proof}
Equation (\ref{e-qpeak}) is obtained by taking (\ref{e-qpkdes1})
and setting $y=1$. Then (\ref{e-qpeak2}) follows by replacing $x$
with $x(1-t)/(1+t)$, making an appropriate substitution, and rearranging
some terms.
\end{proof}

\subsection{Left peaks and descents}

In this subsection, we study the $(\lpk,\des)$ polynomials 
\[
P_{n}^{(\lpk,\des)}(y,t)\coloneqq\sum_{\pi\in\mathfrak{S}_{n}}y^{\lpk(\pi)}t^{\des(\pi)}
\]
and their $q$-analogues 
\[
P_{n}^{(\inv,\lpk,\des)}(q,y,t)\coloneqq\sum_{\pi\in\mathfrak{S}_{n}}q^{\inv(\pi)}y^{\lpk(\pi)}t^{\des(\pi)}.
\]
Using the same method as before, we obtain analogues of Theorems \ref{t-pkdes}
and \ref{t-pkdesq} for left peaks and descents, as well as a connection
to the refined type B Eulerian polynomials introduced in Subsection
2.3.

We begin by proving a suitable noncommutative symmetric function identity.
\begin{lem}
\textup{\label{l-lpkdesncsf}
\begin{multline*}
\mathbf{h}(x)(1-t\mathbf{e}(yx)\mathbf{h}(x))^{-1}=\\
\frac{1}{1-t}+\sum_{n=1}^{\infty}\sum_{L\vDash n}\frac{t^{\lpk(L)}(y+t)^{\des(L)-\lpk(L)}(1+yt)^{n-\lpk(L)-\des(L)}(1+y)^{2\lpk(L)}}{(1-t)^{n+1}}x^{n}\mathbf{r}_{L}
\end{multline*}
}
\end{lem}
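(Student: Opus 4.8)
The plan is to mimic the combinatorial proof of Lemma \ref{l-pkdesncsf} almost verbatim, the only change being that the extra factor $\mathbf{h}(x)$ on the left prepends a weakly increasing $\mathbb{P}$-word (of nonnegative length, hence the shifted exponents) to each peak word we count. More precisely, I would introduce the same alphabet $\mathbb{P}\cup\underline{\mathbb{P}}\cup\{\,|\,\}$ and define a \emph{left peak word} to be a word of the form $w_0 W$, where $W$ is a peak word as in Lemma \ref{l-pkdesncsf} and $w_0$ is a (possibly empty) weakly increasing word in $\mathbb{P}$ placed before the first bar; equivalently, a left peak word is a sequence of blocks $w_1 w_2 \mid$ as before, except the \emph{first} block need not be preceded by anything and its decreasing $\underline{\mathbb{P}}$-part is forced to be empty. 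The left-hand side $\mathbf{h}(x)(1-t\,\mathbf{e}(yx)\mathbf{h}(x))^{-1}$ manifestly counts left peak words with $t$ tracking bars, $y$ tracking underlines, and $x$ tracking the length of the underlying $\mathbb{P}$-word.

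For the right-hand side, I would again pass through \emph{minimal} left peak words: given a word in $\mathbb{P}$ together with a choice of underlines on its letters, there is a unique minimal left peak word, and every left peak word is obtained from a unique minimal one by freely inserting bars into the $n+1$ available slots (contributing the $1/(1-t)^{n+1}$ denominator, with the pure-bar words accounted for by the leading $1/(1-t)$). The core computation is then to show that the weight-generating function for nonempty minimal left peak words with a fixed descent composition $L\vDash n$ equals $t^{\lpk(L)}(y+t)^{\des(L)-\lpk(L)}(1+yt)^{n-\lpk(L)-\des(L)}(1+y)^{2\lpk(L)}x^n\mathbf{r}_L$. This is done by running exactly the five-step encoding from the proof of Lemma \ref{l-pkdesncsf}: for each peak choose whether to underline it (bar before or after), giving $(t+yt)^{\lpk(L)}$ when we now use \emph{left} peaks --- recall that if $1$ is a descent it behaves like a peak for this purpose; then underline-or-not the left-most smallest letter of each resulting segment; then for each non-peak descent choose underline-or-bar; then for the remaining eligible letters choose nothing-or-(underline-next-and-bar). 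Collecting factors as in the original and using $(t+yt)^{\lpk}(1+y)^{\lpk} = t^{\lpk}(1+y)^{2\lpk}$ gives the claimed expression --- note the absence of a leading factor of $t$, since here there need not be a bar at the very beginning.

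The one genuinely new bookkeeping point --- and the place I expect the main subtlety --- is the treatment of the first block. In Lemma \ref{l-pkdesncsf} every block is preceded by a bar, so the very first letter can be underlined and begins a (possibly empty) decreasing run; here the first block must begin with no bar, so its decreasing $\underline{\mathbb{P}}$-part is empty and the first few letters form a weakly increasing run. I would check that this is precisely compensated by the combinatorics of whether $1\in\Des(L)$: if $1$ is not a descent, the index $1$ is not a left peak, and the ``left-most smallest letter of the first segment'' step together with the ``eligible letter'' step exactly reproduces the weakly increasing initial run without any spurious underline; if $1$ is a descent, then $1$ counts as a left peak ($\lpk(L)=\pk(L)+1$ in that case by definition), the very first letter gets the bar-after/underline-before treatment, and the exponents on $t$, $(y+t)$, $(1+yt)$, $(1+y)$ match up correctly because $\lpk$ has absorbed that extra unit. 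Once this case-analysis is verified the remaining algebra is identical to the proof of Lemma \ref{l-pkdesncsf}, so I would present the argument by analogy, emphasizing only the modifications to the first block and the replacement of $\pk$ by $\lpk$ throughout.
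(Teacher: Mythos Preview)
Your proposal is correct and follows essentially the same approach as the paper: define left peak words, identify the left-hand side as their generating function, reduce to minimal left peak words, and count those via the same underline/bar-insertion process with $\lpk$ in place of $\pk$. The paper handles the first-segment subtlety you anticipate by, in its step~2, skipping the first segment precisely when the first letter was \emph{not} underlined in step~1 (so that $(1+y)$ is applied to exactly $\lpk(L)$ segments in every case), and by, in its step~4, additionally allowing the choice ``underline the first letter and prepend a bar'' when the first letter is not a left peak---exactly the bookkeeping you flagged as the main new point.
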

\begin{proof}
Let us say that a word $w$ on the alphabet $\mathbb{P}\cup\underline{\mathbb{P}}\cup\{|\}$
is a \textit{left peak word} if $w$ begins with a (possibly empty)
weakly increasing subword containing only letters from $\mathbb{P}$,
followed by a sequence of subwords of the form $|w_{1}w_{2}$ where
$w_{1}$ is a (possibly empty) strictly decreasing word containing
only letters from $\underline{\mathbb{P}}$ and $w_{2}$ is a (possibly
empty) weakly increasing word containing only letters from $\mathbb{P}$.
The left-hand side of the given equation counts left peak words where
$t$ is weighting the number of bars, $y$ is weighting the number
of underlined letters, and $x$ is weighting the length of the underlying
$\mathbb{P}$-word. We want show that the right-hand side also counts
left peak words with the same weights.

Call a left peak word $w$ \textit{minimal} if it is impossible to
remove bars from $w$ to yield a left peak word. Similar to peak words
in the proof of Lemma \ref{l-pkdesncsf}, every left peak word can
be obtained from only one minimal left peak word, which is the only
minimal left peak word on those letters with the same choice of underlines.
We claim that 
\begin{equation}
(t+yt)^{\lpk(L)}(1+y)^{\lpk(L)}(y+t)^{\des(L)-\lpk(L)}(1+yt)^{n-\des(L)-\lpk(L)}x^{n}\mathbf{r}_{L}\label{e-lpkncsfn}
\end{equation}
counts nonempty minimal peak words with descent composition $L\vDash n$.
Every term in $\mathbf{r}_{L}$ corresponds to a $\mathbb{P}$-word
with descent composition $L$, and we give it a choice of underlines
and insert bars in a similar way as in the proof of Lemma \ref{l-pkdesncsf}:

\begin{enumerate}
\item For each letter corresponding to a left peak, we choose whether or
not to underline it. If we do underline it, then we insert a bar immediately
before it; otherwise, we insert a bar immediately after it. This corresponds
to the $(t+yt)^{\lpk(L)}$ factor. 
\item If the first letter corresponds to a left peak and was underlined,
then the bars inserted in the above step divide our word into $\lpk(L)$
segments. In this case, take the left-most smallest letter of each
segment and choose whether or not to underline it. Otherwise, the
bars divide our word into $\lpk(L)+1$ segments, in which case we
take the left-most smallest letter of each but the first segment and
choose whether or not to underline it. This gives the $(1+y)^{\lpk(L)}$
factor.
\item Take each letter corresponding to a descent that is not a left peak
and choose to either underline it or to add a bar after it; this gives
$(y+t)^{\des(L)-\lpk(L)}$. As in the proof of Lemma \ref{l-pkdesncsf},
this step eliminates underlined letters separated by non-underlined
letters appearing in the same segment, and gives a minimal left peak
word corresponding to our current choice of underlines.
\item Finally, iterate through every letter that is (a) not the final letter
of the word, (b) not corresponding a descent, and (c) not followed
by a letter corresponding to a left peak, and choose either to do
nothing or to underline the next letter and add a bar in between the
two letters. In addition, if the first letter does not correspond
to a left peak, then choose to either do nothing or to underline the
first letter and prepend a bar. This gives $(1+yt)^{n-\des(L)-\lpk(L)-1}$,
and the result is still a minimal left peak word as the new bars are
necessary to accomodate the new underlines.
\end{enumerate}
Through these steps, we have considered whether to underline each
letter in the word, so (\ref{e-lpkncsfn}) counts every minimal left
peak word with descent composition $L\vDash n$. Dividing by $(1-t)^{n+1}$
allows us to insert any number of bars in any of the $n+1$ possible
positions, thus creating left peak words from minimal left peak words,
and the $1/(1-t)$ term accounts for words containing only bars.
\end{proof}
\begin{thm}
\label{t-lpkdes} For $n\geq0$, we have
\begin{equation}
\sum_{k=0}^{n}{n \choose k}(1+y)^{k}(1-t)^{n-k}A_{k}(t)=(1+yt)^{n}P_{n}^{(\lpk,\des)}\left(\frac{(1+y)^{2}t}{(y+t)(1+yt)},\frac{y+t}{1+yt}\right).\label{e-alpkdes}
\end{equation}
Equivalently, 
\begin{equation}
P_{n}^{(\lpk,\des)}(y,t)=\frac{1}{(1+uv)^{n}}\sum_{k=0}^{n}{n \choose k}(1+u)^{k}(1-v)^{n-k}A_{k}(v)\label{e-lpkdesa}
\end{equation}
where $u=\frac{1+t^{2}-2yt-(1-t)\sqrt{(1+t)^{2}-4yt}}{2(1-y)t}$ and
$v=\frac{(1+t)^{2}-2yt-(1+t)\sqrt{(1+t)^{2}-4yt}}{2yt}.$ 
\end{thm}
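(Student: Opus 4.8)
The plan is to follow the template of the proof of Theorem~\ref{t-pkdes}, with Lemma~\ref{l-lpkdesncsf} replacing Lemma~\ref{l-pkdesncsf}, and then to feed the output into the two facts about refined type B Eulerian polynomials established in Subsection~2.3. First I would set $x=1$ in Lemma~\ref{l-lpkdesncsf} and apply the homomorphism $\Phi$. Since $\Phi$ is an algebra homomorphism, $\Phi(\mathbf{h}(1))=e^{x}$ by Lemma~\ref{l-Phi}(b), and $\Phi(\mathbf{e}(y))=e^{yx}$ (which follows from $\mathbf{e}(s)=\mathbf{h}(-s)^{-1}$ exactly as in the proof of Lemma~\ref{l-Phi}(c)), the left-hand side becomes $e^{x}\bigl(1-te^{yx}e^{x}\bigr)^{-1}=e^{x}/(1-te^{(1+y)x})$; meanwhile Lemma~\ref{l-Phi}(a) sends $x^{n}\mathbf{r}_{L}$ to $\beta(L)x^{n}/n!$, so the right-hand side turns into
\[
\frac{1}{1-t}+\sum_{n=1}^{\infty}\sum_{\pi\in\mathfrak{S}_{n}}\frac{t^{\lpk(\pi)}(y+t)^{\des(\pi)-\lpk(\pi)}(1+yt)^{n-\lpk(\pi)-\des(\pi)}(1+y)^{2\lpk(\pi)}}{(1-t)^{n+1}}\,\frac{x^{n}}{n!}.
\]

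Next I would observe, via Theorem~\ref{t-bnegf}, that the left-hand side $e^{x}/(1-te^{(1+y)x})$ is exactly $\sum_{n\geq 0}B_{n}(y,t)(1-t)^{-n-1}x^{n}/n!$. Equating coefficients of $x^{n}/n!$ (the $n=0$ terms being $\tfrac{1}{1-t}$ on both sides) and cancelling the common factor $(1-t)^{-n-1}$ gives, for $n\geq 1$,
\[
B_{n}(y,t)=\sum_{\pi\in\mathfrak{S}_{n}}t^{\lpk(\pi)}(y+t)^{\des(\pi)-\lpk(\pi)}(1+yt)^{n-\lpk(\pi)-\des(\pi)}(1+y)^{2\lpk(\pi)}.
\]
The crux is then a bookkeeping identity: the summand equals $(1+yt)^{n}\bigl(\tfrac{(1+y)^{2}t}{(y+t)(1+yt)}\bigr)^{\lpk(\pi)}\bigl(\tfrac{y+t}{1+yt}\bigr)^{\des(\pi)}$, so summing over $\pi\in\mathfrak{S}_{n}$ yields $B_{n}(y,t)=(1+yt)^{n}P_{n}^{(\lpk,\des)}\bigl(\tfrac{(1+y)^{2}t}{(y+t)(1+yt)},\tfrac{y+t}{1+yt}\bigr)$, which also holds at $n=0$ by convention. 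Combining this with Theorem~\ref{t-bnan}, which gives $B_{n}(y,t)=\sum_{k=0}^{n}\binom{n}{k}(1+y)^{k}(1-t)^{n-k}A_{k}(t)$, produces (\ref{e-alpkdes}).

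For the equivalent form (\ref{e-lpkdesa}), I would use the same substitution as in Theorem~\ref{t-pkdes}: set $u=\tfrac{(1+y)^{2}t}{(y+t)(1+yt)}$ and $v=\tfrac{y+t}{1+yt}$, solve for $y$ and $t$ in terms of $u$ and $v$ with a computer algebra system (which, after relabelling $u,v$ as $y,t$, recovers the stated radical formulas), and divide (\ref{e-alpkdes}) through by $(1+uv)^{n}$ to isolate $P_{n}^{(\lpk,\des)}(y,t)$.

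I do not anticipate a real obstacle here; the one point that warrants a sentence of care is that $\Phi$ may be applied termwise to the geometric series $(1-t\mathbf{e}(y)\mathbf{h}(1))^{-1}=\sum_{k\geq 0}t^{k}(\mathbf{e}(y)\mathbf{h}(1))^{k}$ and that $\Phi(\mathbf{e}(y))=e^{yx}$, both immediate from the grading on $\mathbf{Sym}$ and from (\ref{e-ehr}). The exponent manipulation identifying the summand is routine, and inverting the $(u,v)$ substitution is word-for-word the computation already carried out for Theorem~\ref{t-pkdes}.
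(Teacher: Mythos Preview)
Your proof is correct, but it takes a somewhat different route from the paper's. The paper applies $\Phi$ to Lemma~\ref{l-lpkdesncsf}, obtains $e^{x}/(1-te^{(1+y)x})$ on the left, then multiplies by $1-t$, substitutes $x\mapsto (1-t)x/(1+y)$, and expands the resulting left-hand side directly as the Cauchy product $\bigl(\sum_{n}A_{n}(t)x^{n}/n!\bigr)\bigl(\sum_{n}((1-t)/(1+y))^{n}x^{n}/n!\bigr)$, never invoking the refined type~B polynomials $B_{n}(y,t)$. You instead recognise $e^{x}/(1-te^{(1+y)x})$ as the generating function of Theorem~\ref{t-bnegf}, extract the identity $B_{n}(y,t)=(1+yt)^{n}P_{n}^{(\lpk,\des)}\bigl(\tfrac{(1+y)^{2}t}{(y+t)(1+yt)},\tfrac{y+t}{1+yt}\bigr)$, and then plug in Theorem~\ref{t-bnan}. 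In effect you prove Theorem~\ref{t-lpkdesb} first and deduce Theorem~\ref{t-lpkdes} from it, whereas the paper does the reverse (Theorem~\ref{t-lpkdesb} is stated later as an immediate consequence of Theorems~\ref{t-bnan} and~\ref{t-lpkdes}). Your ordering makes the type~B connection more transparent; the paper's ordering keeps the proof of (\ref{e-alpkdes}) self-contained, independent of the signed-permutation material in Section~2.3.
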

As with Theorem \ref{t-pkdes}, evaluating (\ref{e-alpkdes}) at $y=1$
recovers a known result, Petersen's identity 
\[
\sum_{k=0}^{n}{n \choose k}2^{k}(1-t)^{n-k}A_{k}(t)=(1+t)^{n}P_{n}^{\lpk}\left(\frac{4t}{(1+t)^{2}}\right)
\]
in this case.
\begin{proof}
Taking Lemma \ref{l-lpkdesncsf}, evaluating at $x=1$, applying the
homomorphism $\Phi$, and rearranging some terms yields 
\begin{align*}
\frac{e^{x}}{1-te^{(1+y)x}} & =\frac{1}{1-t}+\sum_{n=1}^{\infty}\sum_{\pi\in\mathfrak{S}_{n}}\frac{(1+yt)^{n}}{(1-t)^{n+1}}\left(\frac{(1+y)^{2}t}{(y+t)(1+yt)}\right)^{\lpk(\pi)}\left(\frac{y+t}{1+yt}\right)^{\des(\pi)}\frac{x^{n}}{n!}.
\end{align*}
Multiplying both sides by $1-t$ and then replacing $x$ by $(1-t)x/(1+y)$
yields 
\begin{align*}
\frac{1-t}{1-te^{(1-t)x}}e^{\frac{1-t}{1+y}x} & =\sum_{n=0}^{\infty}\sum_{\pi\in\mathfrak{S}_{n}}\left(\frac{1+yt}{1+y}\right)^{n}\left(\frac{(1+y)^{2}t}{(y+t)(1+yt)}\right)^{\lpk(\pi)}\left(\frac{y+t}{1+yt}\right)^{\des(\pi)}\frac{x^{n}}{n!}
\end{align*}
Moreover, 
\begin{align*}
\frac{1-t}{1-te^{(1-t)x}}e^{\frac{1-t}{1+y}x} & =\Big(\sum_{n=0}^{\infty}A_{n}(t)\frac{x^{n}}{n!}\Big)\Big(\sum_{n=0}^{\infty}\left(\frac{1-t}{1+y}\right)^{n}\frac{x^{n}}{n!}\Big)\\
 & =\sum_{n=0}^{\infty}\sum_{k=0}^{n}{n \choose k}A_{k}(t)\left(\frac{1-t}{1+y}\right)^{n-k}\frac{x^{n}}{n!},
\end{align*}
so 
\begin{multline*}
\sum_{n=0}^{\infty}\sum_{k=0}^{n}{n \choose k}A_{k}(t)\left(\frac{1-t}{1+y}\right)^{n-k}\frac{x^{n}}{n!}=\\
\sum_{n=0}^{\infty}\sum_{\pi\in\mathfrak{S}_{n}}\left(\frac{1+yt}{1+y}\right)^{n}\left(\frac{(1+y)^{2}t}{(y+t)(1+yt)}\right)^{\lpk(\pi)}\left(\frac{y+t}{1+yt}\right)^{\des(\pi)}\frac{x^{n}}{n!}.
\end{multline*}
Equating the coefficients of $x^{n}/n!$ and rearranging some terms
gives (\ref{e-alpkdes}). Then (\ref{e-lpkdesa}) can be obtained
by making the same substitutions as in the proof of Theorem \ref{t-pkdes}.
\end{proof}
Now, for the $q$-analogue.
\begin{thm}
\label{t-lpkdesq} We have \textup{
\begin{align}
\frac{(1-t)\exp_{q}(x)}{1-t\Exp_{q}(yx)\exp_{q}(x)} & =\sum_{n=0}^{\infty}\left(\frac{1+yt}{1-t}\right)^{n}P_{n}^{(\inv,\lpk,\des)}\left(q,\frac{(1+y)^{2}t}{(y+t)(1+yt)},\frac{y+t}{1+yt}\right)\frac{x^{n}}{[n]_{q}!}.\label{e-qlpkdes1}
\end{align}
}Equivalently, \textup{
\begin{equation}
\sum_{n=0}^{\infty}P_{n}^{(\inv,\lpk,\des)}(q,y,t)\frac{x^{n}}{[n]_{q}!}=\frac{(1-v)\exp_{q}\left(\frac{1-v}{1+uv}x\right)}{1-v\Exp_{q}\left(\frac{u(1-v)}{1+uv}x\right)\exp_{q}\left(\frac{1-v}{1+uv}x\right)}\label{e-qlpkdes2}
\end{equation}
}where $u=\frac{1+t^{2}-2yt-(1-t)\sqrt{(1+t)^{2}-4yt}}{2(1-y)t}$
and $v=\frac{(1+t)^{2}-2yt-(1+t)\sqrt{(1+t)^{2}-4yt}}{2yt}.$ 
\end{thm}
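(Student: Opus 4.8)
The plan is to mirror exactly the proof of Theorem~\ref{t-lpkdes}, but apply the homomorphism $\Phi_q$ in place of $\Phi$ to the noncommutative symmetric function identity of Lemma~\ref{l-lpkdesncsf}. First I would evaluate the identity of Lemma~\ref{l-lpkdesncsf} at $x=1$ and apply $\Phi_q$; by Lemma~\ref{l-Phiq}, we have $\Phi_q(\mathbf{h}(1)) = \exp_q(x)$, $\Phi_q(\mathbf{e}(1)) = \Exp_q(x)$, and $\Phi_q(\mathbf{r}_L) = \beta_q(L)\,x^n/[n]_q!$. Since $\Phi_q$ is an algebra homomorphism, it sends $\mathbf{h}(x)(1-t\mathbf{e}(yx)\mathbf{h}(x))^{-1}$ (evaluated at $x=1$, i.e.\ the series $\mathbf{h}(1)(1-t\mathbf{e}(y)\mathbf{h}(1))^{-1}$ with the scaling absorbed appropriately) to $\exp_q(x)(1-t\Exp_q(yx)\exp_q(x))^{-1}$; here I need to be slightly careful about how the formal variable $x$ inside the noncommutative generating function interacts with $\Phi_q$, which is precisely the bookkeeping already handled in the proof of Theorem~\ref{t-pkdesq}. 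The right-hand side becomes
\[
\frac{1}{1-t} + \sum_{n=1}^{\infty}\sum_{\pi\in\mathfrak{S}_n} q^{\inv(\pi)}\,\frac{t^{\lpk(\pi)}(y+t)^{\des(\pi)-\lpk(\pi)}(1+yt)^{n-\lpk(\pi)-\des(\pi)}(1+y)^{2\lpk(\pi)}}{(1-t)^{n+1}}\,\frac{x^n}{[n]_q!},
\]
using that $\sum_{L\vDash n}\beta_q(L)\,(\cdot) = \sum_{\pi\in\mathfrak{S}_n} q^{\inv(\pi)}(\cdot)$ since $\lpk$, $\des$ are descent statistics.

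Next I would rearrange the summand exactly as in the proof of Theorem~\ref{t-lpkdes}: factor out $(1+yt)^n/(1-t)^{n+1}$ and rewrite the $\lpk$-dependent part as $\big((1+y)^2 t/((y+t)(1+yt))\big)^{\lpk(\pi)}$ and the $\des$-dependent part as $\big((y+t)/(1+yt)\big)^{\des(\pi)}$, so that the double sum is recognized as the generating function of $P_n^{(\inv,\lpk,\des)}(q, u_0, v_0)$ with $u_0 = (1+y)^2 t/((y+t)(1+yt))$ and $v_0 = (y+t)/(1+yt)$. Multiplying both sides by $1-t$ then yields
\[
\frac{(1-t)\exp_q(x)}{1-t\Exp_q(yx)\exp_q(x)} = \sum_{n=0}^{\infty}\Big(\frac{1+yt}{1-t}\Big)^{n} P_n^{(\inv,\lpk,\des)}\Big(q, u_0, v_0\Big)\frac{x^n}{[n]_q!},
\]
which is \eqref{e-qlpkdes1}; the $n=0$ term matches the constant $1$ coming from $1/(1-t)$ times $1-t$.

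For the equivalent form \eqref{e-qlpkdes2}, I would proceed as in the proof of Theorem~\ref{t-pkdesq}: first replace $x$ by $(1-t)x/(1+yt)$ in \eqref{e-qlpkdes1}, which clears the $\big((1+yt)/(1-t)\big)^n$ factor and rescales the arguments of $\exp_q$ and $\Exp_q$ by $(1-t)/(1+yt)$ and $y(1-t)/(1+yt)$ respectively, giving the generating function of $P_n^{(\inv,\lpk,\des)}(q, u_0, v_0)$ on the right. Then I invoke the algebraic substitution already established in Theorem~\ref{t-pkdes}: setting $u = \frac{1+t^2-2yt-(1-t)\sqrt{(1+t)^2-4yt}}{2(1-y)t}$ and $v = \frac{(1+t)^2-2yt-(1+t)\sqrt{(1+t)^2-4yt}}{2yt}$ solves the system $u_0 = u$, $v_0 = v$ when $(y,t)$ are replaced by the new variables—equivalently, performing the inverse substitution turns $u_0 \mapsto y$, $v_0 \mapsto t$ while $\frac{1-t}{1+yt} \mapsto \frac{1-v}{1+uv}$ and $\frac{y(1-t)}{1+yt} \mapsto \frac{u(1-v)}{1+uv}$. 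After this substitution the right-hand side is exactly $\sum_{n=0}^{\infty} P_n^{(\inv,\lpk,\des)}(q,y,t)\,x^n/[n]_q!$ and the left-hand side becomes $\frac{(1-v)\exp_q\left(\frac{1-v}{1+uv}x\right)}{1-v\Exp_q\left(\frac{u(1-v)}{1+uv}x\right)\exp_q\left(\frac{1-v}{1+uv}x\right)}$, yielding \eqref{e-qlpkdes2}. The main obstacle is not conceptual—the argument is a direct transplant of the two preceding proofs—but rather the careful verification that the $q$-exponential homomorphism interacts correctly with the inner variable $x$ of the noncommutative generating function and that the radical-laden substitution genuinely inverts the rational change of variables (best confirmed with a computer algebra system, exactly as remarked after Theorem~\ref{t-pkdes}); one also needs the identity $\Exp_q(x) = (\exp_q(-x))^{-1}$ to ensure $\Phi_q(\mathbf{e}(y)) = \Exp_q(yx)$ comes out with the correct sign convention.
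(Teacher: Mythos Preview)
Your proposal is correct and follows essentially the same approach as the paper's own proof: apply $\Phi_q$ to Lemma~\ref{l-lpkdesncsf} at $x=1$, multiply by $1-t$ to obtain \eqref{e-qlpkdes1}, then replace $x$ by $(1-t)x/(1+yt)$ and perform the same algebraic substitution as in Theorem~\ref{t-pkdes} to obtain \eqref{e-qlpkdes2}. Your write-up is in fact more explicit about the intermediate bookkeeping than the paper's, which compresses the argument into two sentences.
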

\begin{proof}
Apply the homomorphism $\Phi_{q}$ to Lemma \ref{l-lpkdesncsf} evaluated
at $x=1$; then multiplying both sides by $1-t$ yields (\ref{e-qlpkdes1}).

Next, replace $x$ by $(1-t)x/(1+yt)$ to get 
\begin{align*}
\frac{(1-t)\exp_{q}\left(\frac{1-t}{1+yt}x\right)}{1-t\Exp_{q}\left(\frac{y(1-t)}{1+yt}x\right)\exp_{q}\left(\frac{1-t}{1+yt}x\right)} & =\sum_{n=0}^{\infty}P_{n}^{(\inv,\lpk,\des)}\left(q,\frac{(1+y)^{2}t}{(y+t)(1+yt)},\frac{y+t}{1+yt}\right)\frac{x^{n}}{[n]_{q}!}.
\end{align*}
Making the same substitutions as before yields (\ref{e-qlpkdes2}).
\end{proof}
We note that taking (\ref{e-qlpkdes1}) and evaluating at $y=0$ gives
\[
\frac{(1-t)\exp_{q}((1-t)x)}{1-t\exp_{q}((1-t)x)}=\sum_{n=0}^{\infty}q^{\inv(\pi)}t^{\des(\pi)}\frac{x^{n}}{[n]_{q}!},
\]
which is equivalent to Corollary \ref{c-qegf}. Evaluating at $y=1$,
on the other hand, gives us a result for the $(\inv,\lpk)$ polynomials
\[
P_{n}^{(\inv,\lpk)}(q,t)\coloneqq\sum_{\pi\in\mathfrak{S}_{n}}q^{\inv(\pi)}t^{\lpk(\pi)}.
\]
 
\begin{cor}
\label{c-lpkq} We have \textup{
\begin{align*}
\frac{(1-t)\exp_{q}(x)}{1-t\Exp_{q}(x)\exp_{q}(x)} & =\sum_{n=0}^{\infty}\left(\frac{1+t}{1-t}\right)^{n}P_{n}^{(\inv,\lpk)}\left(q,\frac{4t}{(1+t)^{2}}\right)\frac{x^{n}}{[n]_{q}!}.
\end{align*}
}Equivalently, 
\[
\sum_{n=0}^{\infty}P_{n}^{(\inv,\lpk)}(q,t)\frac{x^{n}}{[n]_{q}!}=\frac{(1-v)\exp_{q}\left(\frac{1-v}{1+v}x\right)}{1-v\Exp_{q}\left(\frac{1-v}{1+v}x\right)\exp_{q}\left(\frac{1-v}{1+v}x\right)}
\]
where $v=\frac{2}{t}(1-\sqrt{1-t})-1$.
\end{cor}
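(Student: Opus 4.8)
The plan is to derive Corollary~\ref{c-lpkq} as the $y=1$ specialization of Theorem~\ref{t-lpkdesq}, in exact parallel to the way Corollary~\ref{c-pkq} is obtained from Theorem~\ref{t-pkdesq}. Concretely, I would set $y=1$ in the identity~(\ref{e-qlpkdes1}). On the left-hand side the factor $\Exp_q(yx)$ becomes $\Exp_q(x)$, giving $\tfrac{(1-t)\exp_q(x)}{1-t\Exp_q(x)\exp_q(x)}$. On the right-hand side, the prefactor $\bigl(\tfrac{1+yt}{1-t}\bigr)^n$ becomes $\bigl(\tfrac{1+t}{1-t}\bigr)^n$, and the two arguments of $P_n^{(\inv,\lpk,\des)}$ collapse: $\tfrac{(1+y)^2 t}{(y+t)(1+yt)}$ becomes $\tfrac{4t}{(1+t)^2}$, while $\tfrac{y+t}{1+yt}$ becomes $1$. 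Since $P_n^{(\inv,\lpk,\des)}(q,s,1)=\sum_{\pi\in\mathfrak{S}_n}q^{\inv(\pi)}s^{\lpk(\pi)}=P_n^{(\inv,\lpk)}(q,s)$, this yields the first displayed identity of the corollary immediately.

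For the equivalent (second) identity, I would start from the first identity, relabel its dummy variable $t$ as $v$, and impose $\tfrac{4v}{(1+v)^2}=t$; solving this quadratic for $v$ and keeping the root that is a power series in $t$ vanishing at $t=0$ gives $v=\tfrac{2}{t}(1-\sqrt{1-t})-1$, the same substitution used in Corollary~\ref{c-pkq}, and it turns $P_n^{(\inv,\lpk)}\bigl(q,\tfrac{4v}{(1+v)^2}\bigr)$ into $P_n^{(\inv,\lpk)}(q,t)$. I would then replace $x$ by $\tfrac{1-v}{1+v}x$ throughout; this multiplies the $n$th summand by $\bigl(\tfrac{1-v}{1+v}\bigr)^n$, which cancels the prefactor $\bigl(\tfrac{1+v}{1-v}\bigr)^n$ exactly, leaving $\sum_{n\geq 0}P_n^{(\inv,\lpk)}(q,t)\tfrac{x^n}{[n]_q!}$ on the right and $\tfrac{(1-v)\exp_q(\frac{1-v}{1+v}x)}{1-v\Exp_q(\frac{1-v}{1+v}x)\exp_q(\frac{1-v}{1+v}x)}$ on the left, as claimed. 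Here there is no leftover constant to rearrange, unlike in the peak case, because the prefactor carries the exponent $n$ rather than $n+1$.

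There is no serious obstacle in this argument---it is a routine specialization together with a change of variables---but two small points deserve attention. First, one should specialize the ``unsubstituted'' form~(\ref{e-qlpkdes1}) rather than~(\ref{e-qlpkdes2}): in~(\ref{e-qlpkdes1}) the descent variable of $P_n^{(\inv,\lpk,\des)}$ is $\tfrac{y+t}{1+yt}$, which becomes $1$ precisely when $y=1$ while the left-peak variable stays nontrivial, whereas setting $y=1$ directly in~(\ref{e-qlpkdes2}) collapses the left-hand side to the $q$-Eulerian generating function and destroys the $\lpk$-information. Second, one must choose the correct branch of $\sqrt{1-t}$ (the one equal to $1$ at $t=0$) so that $v$, $\tfrac{1-v}{1+v}$, and all of the $q$-exponential series appearing are well defined as formal power series near $t=0$.
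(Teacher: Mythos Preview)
Your proposal is correct and follows exactly the approach the paper intends: the corollary is stated immediately after the remark that ``evaluating at $y=1$'' in Theorem~\ref{t-lpkdesq} yields the $(\inv,\lpk)$ result, and your argument spells out precisely that specialization of~(\ref{e-qlpkdes1}) together with the subsequent change of variables. Your additional remarks---that one must specialize~(\ref{e-qlpkdes1}) rather than~(\ref{e-qlpkdes2}) (since setting $y=1$ in the latter kills the $\lpk$ variable, and the substitution $u$ there has a $1-y$ in its denominator), and that the branch of $\sqrt{1-t}$ must be chosen so that $v$ is a genuine power series---are valid and add clarity the paper leaves implicit.
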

Lastly, we state an identity connecting the $(\lpk,\des)$ polynomials
with the refined type B Eulerian polynomials $B_{n}(y,t)=\sum_{\pi\in\mathfrak{B}_{n}}y^{\neg(\pi)}t^{\des_{B}(\pi)}.$
\begin{thm}
\label{t-lpkdesb} For all $n\geq0$, we have 
\begin{equation}
B_{n}(y,t)=(1+yt)^{n}P_{n}^{(\lpk,\des)}\left(\frac{(1+y)^{2}t}{(y+t)(1+yt)},\frac{y+t}{1+yt}\right).\label{e-lpkdesb1}
\end{equation}
Equivalently, 
\begin{equation}
P_{n}^{(\lpk,\des)}\left(y,t\right)=\frac{B_{n}(u,v)}{(1+uv)^{n}}\label{e-lpkdesb2}
\end{equation}
where $u=\frac{1+t^{2}-2yt-(1-t)\sqrt{(1+t)^{2}-4yt}}{2(1-y)t}$ and
$v=\frac{(1+t)^{2}-2yt-(1+t)\sqrt{(1+t)^{2}-4yt}}{2yt}.$ 
\end{thm}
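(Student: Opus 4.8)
The plan is to observe that the right-hand side of the identity in Theorem \ref{t-bnan} is \emph{literally} the left-hand side of identity \eqref{e-alpkdes} in Theorem \ref{t-lpkdes}. Concretely, Theorem \ref{t-bnan} gives
\[
B_n(y,t) = \sum_{k=0}^{n}\binom{n}{k}(1+y)^{k}(1-t)^{n-k}A_{k}(t),
\]
while Theorem \ref{t-lpkdes} asserts that this same sum equals $(1+yt)^{n}P_{n}^{(\lpk,\des)}\bigl(\tfrac{(1+y)^{2}t}{(y+t)(1+yt)},\tfrac{y+t}{1+yt}\bigr)$. Chaining the two equalities yields \eqref{e-lpkdesb1} with no additional computation. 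So the first step is simply to quote these two theorems in succession.

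For the equivalent form \eqref{e-lpkdesb2}, I would proceed exactly as in the proof of Theorem \ref{t-pkdes}: set $u=\tfrac{(1+y)^{2}t}{(y+t)(1+yt)}$ and $v=\tfrac{y+t}{1+yt}$, solve for the original variables (via a computer algebra system), and obtain the stated closed forms for $u$ and $v$ in terms of $y$ and $t$; this is the same substitution already used in Theorems \ref{t-pkdes}, \ref{t-pkdesq}, and \ref{t-lpkdes}. An even more direct route, avoiding any explicit inversion, is to apply Theorem \ref{t-bnan} with $(u,v)$ in place of $(y,t)$, rewriting $\sum_{k=0}^{n}\binom{n}{k}(1+u)^{k}(1-v)^{n-k}A_{k}(v)$ as $B_n(u,v)$; substituting this into \eqref{e-lpkdesa} of Theorem \ref{t-lpkdes} gives $P_{n}^{(\lpk,\des)}(y,t)=B_n(u,v)/(1+uv)^{n}$, which is \eqref{e-lpkdesb2}.

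Since both directions collapse onto previously established identities, there is essentially no obstacle to overcome. The only point deserving a moment's care is confirming that the change of variables linking \eqref{e-lpkdesb1} and \eqref{e-lpkdesb2} is the very one recorded in the earlier theorems, so that the quoted expressions for $u$ and $v$ transfer verbatim; this is a routine check. I would therefore present the proof as a short deduction, first establishing \eqref{e-lpkdesb1} by combining Theorems \ref{t-bnan} and \ref{t-lpkdes}, and then deriving \eqref{e-lpkdesb2} by the same substitution used throughout this section.
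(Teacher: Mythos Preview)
Your proposal is correct and matches the paper's own proof essentially verbatim: the paper simply notes that \eqref{e-lpkdesb1} follows immediately from Theorems \ref{t-bnan} and \ref{t-lpkdes}, and that \eqref{e-lpkdesb2} is obtained by the same substitutions as before. Your alternative route to \eqref{e-lpkdesb2} via substituting Theorem \ref{t-bnan} into \eqref{e-lpkdesa} is a minor variant of the same idea.
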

By setting $y=1$ in (\ref{e-lpkdesb1}), we recover the identity
\[
B_{n}(t)=(1+t)^{n}P_{n}^{\lpk}\left(\frac{4t}{(1+t)^{2}}\right)
\]
where $B_{n}(t)=\sum_{\pi\in\mathfrak{B}_{n}}t^{\des_{B}(\pi)}$,
another result of Petersen \cite{Petersen2007}.
\begin{proof}
Observe that (\ref{e-lpkdesb1}) follows immediately from Theorems
\ref{t-bnan} and \ref{t-lpkdes}. Making the same substitutions as
before yields (\ref{e-lpkdesb2}).
\end{proof}
We give a combinatorial proof of this result in Subsection 5.3.

\subsection{Up-down runs and descents}

Our remaining aim in this section is to prove analogous results for
the number of up-down runs $\udr$ and the joint distribution of $\udr$
and $\des$. In particular, we express the polynomial 
\[
P_{n}^{\udr}(y,t)\coloneqq\sum_{\pi\in\mathfrak{S}_{n}}t^{\udr(\pi)}
\]
in terms of the $n$th Eulerian polynomial, give a $q$-exponential
generating function for its $q$-analogue, and relate it to the distribution
of the flag descent number over $\mathfrak{B}_{n}$. Due to technical
constraints that will become apparent later, we cannot do the same
with the polynomial 
\[
P_{n}^{(\udr,\des)}(y,t)\coloneqq\sum_{\pi\in\mathfrak{S}_{n}}y^{\udr(\pi)}t^{\des(\pi)}.
\]
Instead, we will work with 
\[
P_{n}^{(\lpk,\val,\des)}(y,z,t)\coloneqq\sum_{\pi\in\mathfrak{S}_{n}}y^{\lpk(\pi)}z^{\val(\pi)}t^{\des(\pi)},
\]
which is equivalent to $P_{n}^{(\udr,\des)}(y,t)$ in light of Lemma
\ref{l-udr}.
\begin{lem}
\textup{\label{l-udrdesncsf}
\begin{multline*}
(1-t^{2}\mathbf{h}(x)\mathbf{e}(yx))^{-1}(1+t\mathbf{h}(x))=\frac{1}{1-t}\\
+\sum_{n=1}^{\infty}\sum_{L\vDash n}\frac{\begin{array}{l}
t^{\udr(L)}(1+y)^{\udr(L)-1}(1+yt^{2})^{n-1-\des(L)-\val(L)}(y+t^{2})^{\des(L)-\lpk(L)}\\
\qquad\qquad\qquad\qquad\qquad\qquad\times(1+yt)^{1-\lpk(L)+\val(L)}(y+t)^{\lpk(L)-\val(L)}
\end{array}}{(1-t)(1-t^{2})^{n}}x^{n}\mathbf{r}_{L}
\end{multline*}
}
\end{lem}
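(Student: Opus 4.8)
The plan is to mimic the combinatorial arguments used to prove Lemmas \ref{l-pkdesncsf} and \ref{l-lpkdesncsf}, but now encoding up-down runs in terms of a new family of decorated words whose structure reflects both the initial run and the alternation of long/short runs measured by $\lpk$ and $\val$. Specifically, I would introduce a notion of \emph{up-down word} on the alphabet $\mathbb{P}\cup\underline{\mathbb{P}}\cup\{|\}$, designed so that the left-hand side $(1-t^{2}\mathbf{h}(x)\mathbf{e}(yx))^{-1}(1+t\mathbf{h}(x))$ is its noncommutative generating function with $t$ weighting bars, $y$ weighting underlines, and $x$ weighting the underlying $\mathbb{P}$-word length. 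The factor $1+t\mathbf{h}(x)$ should correspond to an optional initial weakly increasing (non-underlined) block possibly followed by a single bar—accounting for the ``$+1$'' in $\udr = \lpk+\val+1$ and the asymmetry between the first up-down run and the rest—while the geometric series $(1-t^{2}\mathbf{h}(x)\mathbf{e}(yx))^{-1}$ generates the repeated ``valley–peak'' two-bar blocks of the form $||\,w_1 w_2$ with $w_1$ strictly decreasing underlined and $w_2$ weakly increasing non-underlined (the $t^2$ reflecting that each interior up-down run boundary of this type costs two bars, as in the flag descent bookkeeping of Theorem \ref{t-fnegf}).

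Next I would define \emph{minimal} up-down words exactly as before—those from which no bar can be deleted while remaining an up-down word—and argue that each $\mathbb{P}$-word with a fixed choice of underlines determines a unique minimal up-down word, so that every up-down word arises from a unique minimal one by inserting extra bars (which accounts for the $1/(1-t)$ summand and the $(1-t)(1-t^2)^n$ denominators in the displayed formula). The core of the proof is then to show that for each composition $L\vDash n$ the monomial
\[
t^{\udr(L)}(1+y)^{\udr(L)-1}(1+yt^{2})^{n-1-\des(L)-\val(L)}(y+t^{2})^{\des(L)-\lpk(L)}(1+yt)^{1-\lpk(L)+\val(L)}(y+t)^{\lpk(L)-\val(L)}\,x^{n}\mathbf{r}_{L}
\]
counts the nonempty minimal up-down words with descent composition $L$. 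Starting from a term of $\mathbf{r}_L$—a $\mathbb{P}$-word with descent composition $L$—I would process the letters in a prescribed order: for letters at left-peak positions decide underline-before vs.\ bar-after; for valley positions make the analogous two-bar choice; for the smallest letter in each resulting segment decide whether it joins the underlined decreasing part; for descents that are neither peaks nor valleys choose underline vs.\ single bar; and for the remaining ``free'' positions choose whether to underline the following letter and insert the necessary bar. The exponents $\udr(L)-1$, $\des(L)-\lpk(L)$, $\lpk(L)-\val(L)$, $n-1-\des(L)-\val(L)$, and $1-\lpk(L)+\val(L)$ should fall out as the cardinalities of these five classes of positions, using Lemma \ref{l-udr} (parts (a) and (d)) to rewrite counts of left peaks and valleys in terms of $\udr$ and $\des$; the $(1+y)^{\udr(L)-1}$ factor collects the segment-choices, and the leading $t^{\udr(L)}$ counts the mandatory bars of the minimal word.

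The main obstacle I anticipate is getting the \emph{boundary bookkeeping} exactly right: because $\udr$, $\lpk$, and $\val$ behave differently depending on whether $n-1$ is a descent (Lemma \ref{l-udr}(d)) and whether the first position is a left peak, the ``initial block'' factor $1+t\mathbf{h}(x)$ and the exponents $1-\lpk(L)+\val(L)$ and $1+yt\text{-power}$ have to be calibrated case by case (first run long vs.\ short, last run long vs.\ short), and I will need to check that the five position-classes I described genuinely partition the $n$ letters in every case with no off-by-one error. A secondary point is verifying algebraically that the two exponent identities $\des(L)-\pk$-type quantities match the $(y+t^2)$, $(y+t)$, $(1+yt^2)$, $(1+yt)$ structure—this is the analogue of the ``observe that \eqref{e-pkncsfn} equals \dots'' step in Lemma \ref{l-pkdesncsf} and should be routine once the combinatorial count is established. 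As the author notes for the earlier lemmas, an alternative is to derive this identity from the ``generalized run theorem'' of \cite{Zhuang2016} by specializing parameters, which would sidestep the delicate boundary casework at the cost of a less transparent argument; I would mention this as a fallback but present the direct combinatorial proof.
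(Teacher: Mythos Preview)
Your overall strategy matches the paper's proof: define ``up-down run words'' on $\mathbb{P}\cup\underline{\mathbb{P}}\cup\{|\}$ so that the left side is their generating function, isolate a notion of minimality, and then show that the displayed monomial counts nonempty minimal such words with descent composition $L$ via a letter-by-letter underlining/bar-insertion procedure. However, several concrete details in your setup would cause the argument to fail as written.

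First, you have the block structure backwards. In $(1-t^{2}\mathbf{h}(x)\mathbf{e}(yx))^{-1}(1+t\mathbf{h}(x))$ the factor $\mathbf{h}$ precedes $\mathbf{e}$ and the optional $(1+t\mathbf{h}(x))$ is on the \emph{right}, so an up-down run word is a concatenation of blocks $w_{1}|w_{2}|$ with $w_{1}$ weakly increasing non-underlined and $w_{2}$ strictly decreasing underlined, optionally followed by a terminal $w_{3}|$ with $w_{3}$ weakly increasing; it is not an initial increasing block followed by $||\,w_{1}w_{2}$ with $w_{1}$ decreasing. This reversal propagates: your description of the $(1+y)^{\udr(L)-1}$ factor as ``segment-smallest-letter choices'' is borrowed from Lemma~\ref{l-pkdesncsf} and is not what happens here. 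In the paper's proof there is no such step; instead $(t+yt)^{\udr(L)-1}$ arises directly from deciding, at each left peak or valley, whether to underline and where the single associated bar goes, and this factor \emph{is} $t^{\udr(L)-1}(1+y)^{\udr(L)-1}$.

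Second, your ``underline vs.\ single bar'' and ``insert the necessary bar'' choices at interior descents and ascents are wrong by one bar each: the correct factors are $(y+t^{2})$ and $(1+yt^{2})$, reflecting that in this word model a non-underlined letter inside an underlined segment (or vice versa) forces bars on \emph{both} sides to preserve the $w_{1}|w_{2}|$ alternation. Finally, the last letter must be treated separately, and this is exactly where the $(1+yt)^{1-\lpk(L)+\val(L)}(y+t)^{\lpk(L)-\val(L)}$ dichotomy from Lemma~\ref{l-udr}(d) enters; your outline folds it into the generic steps, which would miscount. Once these points are corrected, the denominator $(1-t)(1-t^{2})^{n}$ arises because extra bars can be added only in pairs at the $n$ non-terminal positions, but singly at the end.
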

\begin{proof}
Let us say that a word $w$ on the alphabet $\mathbb{P}\cup\underline{\mathbb{P}}\cup\{|\}$
is a \textit{up-down run word} if $w$ is either:

\begin{itemize}
\item A sequence of subwords of the form $w_{1}|w_{2}|$ where $w_{1}$
is a (possibly empty) weakly increasing word containing only letters
from $\mathbb{P}$ and $w_{2}$ is a (possibly empty) strictly decreasing
word containing only letters from $\underline{\mathbb{P}}$;
\item Or, a sequence of subwords of the form $w_{1}|w_{2}|$ as described
above, but ending with a subword of the form $w_{3}|$, where $w_{3}$
is a (possibly empty) weakly increasing word containing only letters
from $\mathbb{P}$.
\end{itemize}
For example, 
\begin{equation}
12||246678|\underline{98}|4|\underline{321}||\underline{5}|||23|\label{e-udrword}
\end{equation}
is an up-down run word. The left-hand side of the given equation counts
up-down run words where, as before, $t$ is weighting the number of
bars, $y$ is weighting the number of underlined letters, and $x$
is weighting the length of the underlying $\mathbb{P}$-word. We want
show that the right-hand side also counts up-down run words with the
same weights.

Call an up-down run word $w$ \textit{minimal} if it is impossible
to remove bars from $w$ to yield an up-down run word. As before,
every up-down run word can be obtained from only one minimal up-down
run word, which is the only minimal up-down run word on those letters
with the same choice of underlines. For example, the minimal up-down
run word on 
\[
12246678\underline{98}4\underline{3215}23
\]
is 
\[
12246678|\underline{98}|4|\underline{321}||\underline{5}|23|,
\]
which is the unique minimal up-down run word that (\ref{e-udrword})
can be obtained from. We claim that 
\begin{multline}
t(t+yt)^{\udr(L)-1}(1+yt^{2})^{n-1-\des(L)-\val(L)}(y+t^{2})^{\des(L)-\lpk(L)}\\
\times(1+yt)^{1-\lpk(L)+\val(L)}(y+t)^{\lpk(L)-\val(L)}x^{n}\mathbf{r}_{L}\label{e-udrncsf}
\end{multline}
counts nonempty minimal up-down run words with descent composition
$L\vDash n$. Every term in $\mathbf{r}_{L}$ corresponds to a $\mathbb{P}$-word
with descent composition $L$, and we give it a choice of underlines
and insert the necessary bars. Let us take $85432113444889323344513456$
as our working example.

\begin{enumerate}
\item Every up-down run word must end with a bar, so insert a bar at the
end of our word: 
\[
85432123444889323344513456|.
\]
This gives the initial factor of $t$.
\item For each letter corresponding to a left peak or valley (i.e., each
letter that is at the end of an up-down run other than the last one),
we choose whether or not to underline it. For a left peak, if we do
underline it, then we insert a bar immediately before it; otherwise,
insert a bar immediately after it. For a valley, if we do underline
it, then we insert a bar immediately after it; otherwise, insert a
bar immediately before it. This gives the $(t+yt)^{\udr(L)-1}$ factor.
For example, we may have 
\[
|\underline{8}5432|12344488|\underline{9}3\underline{2}|33445|\underline{1}|3456|.
\]
\item For each letter corresponding to a descent that is not a left peak
(i.e., each letter corresponding to a descent and is not the final
letter of an up-down run), choose whether or not to underline it.
If we do not underline the letter, then prepend and append a bar to
it. This gives the $(y+t^{2})^{\des(L)-\lpk(L)}$ factor. For example,
we may have 
\[
|\underline{854}|3|\underline{2}|12344488|\underline{932}|33445|\underline{1}|3456|.
\]
This step eliminates instances of non-underlined letters appearing
in the same segment as an underlined letter, and by adding the bars,
we have a minimal up-down run word corresponding to our current choice
of underlines.
\item For each letter corresponding to an ascent\footnote{We say that $i\in[n-1]$ is an \textit{ascent} of the word $w=w_{1}\cdots w_{n}$
if $w_{i}\leq w_{i+1}$, i.e., if it is not a descent. Ascents are
defined in the analogous way for permutations (with the weak inequality
$\leq$ replaced by the strict inequality $<$ since letters cannot
repeat).} that is not a valley (i.e., each letter corresponding to an ascent
and is not the final letter of an up-down run), choose whether or
not to underline it. If we underline the letter, then also prepend
and append a bar to it. This gives the $(1+yt^{2})^{n-1-\des(L)-\val(L)}$
factor. For example, we may have 
\[
|\underline{854}|3|\underline{2}|12|\underline{3}||\underline{4}|4488|\underline{932}|33|\underline{4}|45|\underline{1}|3456|.
\]
Note that adding the bars is necessary so that the result is a minimal
up-down run word.
\item The only remaining letter of our word that still requires consideration
is the final letter, so the last step is to choose whether or not
to underline it. If the word ends with an increasing run of length
1 (which is equivalent to $\lpk(L)-\val(L)=1$ by Lemma \ref{l-udr})\footnote{Although Lemma \ref{l-udr} was stated for permutations, it also holds
for words.} and we do not underline the final letter, then prepend a bar to it.
If the word ends with an increasing run of length at least 2 (which
is equivalent to $\lpk(L)-\val(L)=0$ by Lemma \ref{l-udr}) and we
underline the final letter, then prepend a bar to it. This gives $(1+yt)^{1-\lpk(L)+\val(L)}(y+t)^{\lpk(L)-\val(L)}$.
For example, we may have 
\[
|\underline{854}|3|\underline{2}|12|\underline{3}||\underline{4}|4488|\underline{932}|33|\underline{4}|45|\underline{1}|345|\underline{6}|.
\]
Again, we have a minimal up-down run word.
\end{enumerate}
We have chosen whether or not to underline each letter in the word,
so (\ref{e-udrncsf}) counts every minimal up-down run word with descent
composition $L\vDash n$. Dividing by $(1-t^{2})^{n}$ allows us to
insert bars in multiples of two at the beginning of the word or between
any two letters; adding them in multiples of two is necessary for
the result to remain an up-down word. However, any number of bars
can be added at the end, hence dividing by $1-t$ as well. This accounts
for all up-down words other than those only consisting of bars, which
are accounted for by the $1/(1-t)$ term.
\end{proof}
\begin{cor}
\textup{\label{c-udrncsf}
\[
{\displaystyle (1-t^{2}\mathbf{h}(x)\mathbf{e}(x))^{-1}(1+t\mathbf{h}(x))=\frac{1}{1-t}+\sum_{n=1}^{\infty}\sum_{L\vDash n}\frac{2^{\udr(L)-1}t^{\udr(L)}(1+t^{2})^{n-\udr(L)}}{(1-t)^{2}(1-t^{2})^{n-1}}x^{n}\mathbf{r}_{L}}
\]
}
\end{cor}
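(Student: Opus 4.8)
The plan is to derive Corollary~\ref{c-udrncsf} as the $y=1$ specialization of Lemma~\ref{l-udrdesncsf}, followed by a routine simplification of the exponents using Lemma~\ref{l-udr}. First I would note that substituting $y=1$ into the left-hand side of Lemma~\ref{l-udrdesncsf} gives precisely $(1-t^{2}\mathbf{h}(x)\mathbf{e}(x))^{-1}(1+t\mathbf{h}(x))$, and that the leading term $1/(1-t)$ is unchanged. So the entire content of the corollary reduces to checking that the general summand on the right-hand side of Lemma~\ref{l-udrdesncsf} collapses to the claimed form when $y=1$.

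Setting $y=1$, the factor $(1+y)^{\udr(L)-1}$ becomes $2^{\udr(L)-1}$, while the two powers of $1+t^{2}$ merge into
\[
(1+t^{2})^{n-1-\des(L)-\val(L)}\,(1+t^{2})^{\des(L)-\lpk(L)} = (1+t^{2})^{n-1-\val(L)-\lpk(L)} = (1+t^{2})^{n-\udr(L)},
\]
where the last equality uses Lemma~\ref{l-udr}(a), namely $\udr(L)=\lpk(L)+\val(L)+1$. Similarly the two powers of $1+t$ combine to $(1+t)^{1-\lpk(L)+\val(L)}(1+t)^{\lpk(L)-\val(L)}=1+t$. Thus the coefficient of $x^{n}\mathbf{r}_{L}$ becomes
\[
\frac{2^{\udr(L)-1}\,t^{\udr(L)}\,(1+t^{2})^{n-\udr(L)}\,(1+t)}{(1-t)(1-t^{2})^{n}}.
\]

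To finish, I would rewrite the prefactor: since $(1-t^{2})^{n}=(1-t)(1+t)(1-t^{2})^{n-1}$, the extra $1+t$ in the numerator cancels, and the denominator becomes $(1-t)^{2}(1-t^{2})^{n-1}$, matching the statement exactly. Every step is a direct algebraic manipulation, so I do not anticipate a genuine obstacle; the only point requiring care is the bookkeeping of the six exponents appearing in Lemma~\ref{l-udrdesncsf} and the correct invocation of the identity $\udr(L)=\lpk(L)+\val(L)+1$.
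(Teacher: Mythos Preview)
Your proposal is correct and follows essentially the same approach as the paper: both set $y=1$ in Lemma~\ref{l-udrdesncsf}, combine the resulting $(1+t^{2})$-powers using $\udr(L)=\lpk(L)+\val(L)+1$ from Lemma~\ref{l-udr}, collapse the $(1+t)$-powers to a single $1+t$, and then absorb this factor into the denominator via $(1-t^{2})^{n}=(1-t)(1+t)(1-t^{2})^{n-1}$.
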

\begin{proof}
Taking Lemma \ref{l-udrdesncsf} and setting $y=1$, we obtain 
\begin{multline*}
(1-t^{2}\mathbf{h}(x)\mathbf{e}(x))^{-1}(1+t\mathbf{h}(x))=\\
\frac{1}{1-t}+\sum_{n=1}^{\infty}\sum_{L\vDash n}\frac{2^{\udr(L)-1}t^{\udr(L)}(1+t^{2})^{n-1-\val(L)-\lpk(L)}(1+t)}{(1-t)(1-t^{2})^{n}}x^{n}\mathbf{r}_{L}.
\end{multline*}
Because $\udr(L)=\lpk(L)+\val(L)+1$ (Lemma \ref{l-udr}), it follows
that 
\begin{align*}
(1-t^{2}\mathbf{h}(x)\mathbf{e}(x))^{-1}(1+t\mathbf{h}(x)) & =\frac{1}{1-t}+\sum_{n=1}^{\infty}\sum_{L\vDash n}\frac{2^{\udr(L)-1}t^{\udr(L)}(1+t^{2})^{n-\udr(L)}(1+t)}{(1-t)(1-t^{2})^{n}}x^{n}\mathbf{r}_{L}\\
 & =\frac{1}{1-t}+\sum_{n=1}^{\infty}\sum_{L\vDash n}\frac{2^{\udr(L)-1}t^{\udr(L)}(1+t^{2})^{n-\udr(L)}}{(1-t)^{2}(1-t^{2})^{n-1}}x^{n}\mathbf{r}_{L}.
\end{align*}
\end{proof}
Since $\udr$ is the only statistic present in the noncommutative
symmetric function identity given in the above corollary, we can use
it to obtain results for the up-down run polynomials $P_{n}^{\udr}(t)$
and their $q$-analogues 
\[
P_{n}^{(\inv,\udr)}(q,t)\coloneqq\sum_{\pi\in\mathfrak{S}_{n}}q^{\inv(\pi)}t^{\udr(\pi)}.
\]

\begin{thm}
\label{t-udr} For $n\geq1$, we have
\begin{equation}
A_{n}(t)=\frac{(1+t^{2})^{n}}{2(1+t)^{n-1}}P_{n}^{\udr}\left(\frac{2t}{1+t^{2}}\right).\label{e-audr}
\end{equation}
Equivalently, 
\begin{equation}
P_{n}^{\udr}(t)=\frac{2(1+v)^{n-1}}{(1+v^{2})^{n}}A_{n}(v)\label{e-udra}
\end{equation}
where $v=\frac{1-\sqrt{1-t^{2}}}{t}$.
\end{thm}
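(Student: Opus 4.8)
My plan is to follow the template used in the proofs of Theorems \ref{t-pkdes} and \ref{t-lpkdes}: apply the homomorphism $\Phi$ to the noncommutative symmetric function identity of Corollary \ref{c-udrncsf}, evaluated at $x=1$, and then manipulate the resulting exponential generating function identity into the claimed form.

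First I would evaluate Corollary \ref{c-udrncsf} at $x=1$ and apply $\Phi$. Since $\Phi$ is an algebra homomorphism with $\Phi(\mathbf{h}(1)) = \Phi(\mathbf{e}(1)) = e^{x}$ and $\Phi(\mathbf{r}_{L}) = \beta(L)\,x^{n}/n!$ by Lemma \ref{l-Phi}, the left-hand side becomes
\[
\frac{1 + t e^{x}}{1 - t^{2} e^{2x}} = \frac{1 + t e^{x}}{(1 - t e^{x})(1 + t e^{x})} = \frac{1}{1 - t e^{x}},
\]
and the double sum over compositions on the right-hand side collapses to a sum over $\pi \in \mathfrak{S}_{n}$. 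The key algebraic step is then to write
\[
2^{\udr(\pi)-1} t^{\udr(\pi)} (1 + t^{2})^{n - \udr(\pi)} = \frac{(1 + t^{2})^{n}}{2} \left( \frac{2t}{1 + t^{2}} \right)^{\!\!\udr(\pi)}
\]
together with $(1-t)^{2}(1-t^{2})^{n-1} = (1-t)^{n+1}(1+t)^{n-1}$, so that the permutation sum is recognized as a specialization of $P_{n}^{\udr}$; this gives
\[
\frac{1}{1 - t e^{x}} = \frac{1}{1-t} + \sum_{n=1}^{\infty} \frac{(1 + t^{2})^{n}}{2(1-t)^{n+1}(1+t)^{n-1}}\, P_{n}^{\udr}\!\left(\frac{2t}{1 + t^{2}}\right) \frac{x^{n}}{n!}.
\]

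Next I would multiply both sides by $1-t$ and replace $x$ by $(1-t)x$. The left-hand side becomes $\dfrac{1-t}{1 - t e^{(1-t)x}} = \sum_{n \geq 0} A_{n}(t)\, x^{n}/n!$, the exponential generating function of the Eulerian polynomials, while the factor $(1-t)^{n}$ introduced by the substitution cancels the remaining $(1-t)^{-n}$ on the right. Equating coefficients of $x^{n}/n!$ for $n \geq 1$ then yields identity (\ref{e-audr}). For the equivalent form (\ref{e-udra}), I would rewrite (\ref{e-audr}) with its variable renamed $v$, set $\frac{2v}{1+v^{2}} = t$, solve for $v$ taking the branch $v = \frac{1 - \sqrt{1 - t^{2}}}{t}$ that vanishes at $t = 0$, and rearrange to isolate $P_{n}^{\udr}(t)$.

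There is no real obstacle here — this is the most routine of the main-result derivations — and the only things requiring care are the bookkeeping of the powers of $1-t$, $1+t$, and $1+t^{2}$ when combining the rewriting of the summand with the substitution $x \mapsto (1-t)x$, and the (pleasant) cancellation that the extra numerator $1 + t e^{x}$, which is the image of $1 + t\mathbf{h}(x)$, is exactly what collapses $1 - t^{2}e^{2x}$ to $1 - t e^{x}$. Alternatively one could bypass the substitution step entirely by invoking the classical expansion $\frac{1}{1 - t e^{x}} = \frac{1}{1-t} + \sum_{n \geq 1} \frac{A_{n}(t)}{(1-t)^{n+1}} \frac{x^{n}}{n!}$ directly.
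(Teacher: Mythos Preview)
Your proposal is correct and follows essentially the same approach as the paper: apply $\Phi$ to Corollary \ref{c-udrncsf} at $x=1$, recognize the left-hand side as $1/(1-te^{x})$, rewrite the right-hand side in terms of $P_{n}^{\udr}\!\big(2t/(1+t^{2})\big)$, multiply by $1-t$ and substitute $x\mapsto(1-t)x$ to match the Eulerian EGF, and then invert via $t=2v/(1+v^{2})$ to obtain (\ref{e-udra}). Your explicit factorization $(1-t)^{2}(1-t^{2})^{n-1}=(1-t)^{n+1}(1+t)^{n-1}$ and the observation about the cancellation of $1+te^{x}$ are exactly the bookkeeping the paper glosses over as ``rearranging some terms.''
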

\begin{proof}
Taking Corollary \ref{c-udrncsf}, evaluating at $x=1$, applying
the homomorphism $\Phi$, and rearranging some terms yields 
\begin{align*}
\frac{1}{1-te^{x}}=\frac{1+te^{x}}{1-t^{2}e^{2x}} & =\frac{1}{1-t}+\sum_{n=1}^{\infty}\sum_{\pi\in\mathfrak{S}_{n}}\frac{(1+t^{2})^{n}}{2(1-t)^{2}(1-t^{2})^{n-1}}\left(\frac{2t}{1+t^{2}}\right)^{\udr(\pi)}\frac{x^{n}}{n!}.
\end{align*}
Multiplying both sides by $1-t$ and then replacing $x$ by $(1-t)x$
yields 
\begin{align*}
\frac{1-t}{1-te^{(1-t)x}} & =1+\sum_{n=1}^{\infty}\sum_{\pi\in\mathfrak{S}_{n}}\frac{(1+t^{2})^{n}}{2(1+t)^{n-1}}\left(\frac{2t}{1+t^{2}}\right)^{\udr(\pi)}\frac{x^{n}}{n!}.
\end{align*}
The left-hand side is precisely the exponential generating function
for the Eulerian polynomials, so equating the coefficients of $x^{n}/n!$
gives (\ref{e-audr}). Then (\ref{e-udra}) can be obtained by making
the substitution $v=2t/(1+t)^{2}$ and solving for $t$.
\end{proof}
\begin{thm}
\label{t-udrq} We have \textup{
\begin{align}
\frac{(1-t)(1+t\exp_{q}(x))}{1-t^{2}\exp_{q}(x)\Exp_{q}(x)} & =1+\frac{1+t}{2}\sum_{n=1}^{\infty}\left(\frac{1+t^{2}}{1-t^{2}}\right)^{n}P_{n}^{(\inv,\udr)}\left(q,\frac{2t}{1+t^{2}}\right)\frac{x^{n}}{[n]_{q}!}.\label{e-qinvudr1}
\end{align}
}Equivalently, \textup{
\begin{equation}
\sum_{n=1}^{\infty}P_{n}^{(\inv,\udr)}(q,t)\frac{x^{n}}{[n]_{q}!}=\frac{2}{1+v}\left(\frac{(1-v)\left(1+v\exp_{q}\left(\frac{1-v^{2}}{1+v^{2}}x\right)\right)}{1-v^{2}\exp_{q}\left(\frac{1-v^{2}}{1+v^{2}}x\right)\Exp_{q}\left(\frac{1-v^{2}}{1+v^{2}}x\right)}-1\right)\label{e-qinvudr2}
\end{equation}
}where $v=\frac{1-\sqrt{1-t^{2}}}{t}.$
\end{thm}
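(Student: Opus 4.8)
The plan is to run the proof of Theorem~\ref{t-udr} essentially verbatim, replacing the homomorphism $\Phi$ by $\Phi_{q}$ and Lemma~\ref{l-Phi} by Lemma~\ref{l-Phiq}. Concretely, I would start from the noncommutative symmetric function identity of Corollary~\ref{c-udrncsf}, evaluate both sides at $x=1$, and apply $\Phi_{q}$. Since $\Phi_{q}$ is an $F$-algebra homomorphism, it sends
\[
(1-t^{2}\mathbf{h}(1)\mathbf{e}(1))^{-1}(1+t\mathbf{h}(1))\longmapsto \big(1-t^{2}\,\Phi_{q}(\mathbf{h}(1))\,\Phi_{q}(\mathbf{e}(1))\big)^{-1}\big(1+t\,\Phi_{q}(\mathbf{h}(1))\big),
\]
which by parts (b) and (c) of Lemma~\ref{l-Phiq} equals $(1-t^{2}\exp_{q}(x)\Exp_{q}(x))^{-1}(1+t\exp_{q}(x))$. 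Unlike in the proof of Theorem~\ref{t-udr}, where $\Phi(\mathbf{h}(1))=\Phi(\mathbf{e}(1))=e^{x}$ causes the left side to collapse to $1/(1-te^{x})$, here there is no further simplification precisely because $\exp_{q}\neq\Exp_{q}$; this is the structural reason the $q$-statement cannot be phrased in terms of a $q$-analogue of $1/(1-te^{x})$ alone. On the right-hand side, part (a) of Lemma~\ref{l-Phiq} replaces each $\mathbf{r}_{L}$ by $\beta_{q}(L)\,x^{n}/[n]_{q}!$, and summing $\beta_{q}(L)$ weighted by a descent statistic over all $L\vDash n$ is the same as summing $q^{\inv(\pi)}$ times that statistic over $\pi\in\mathfrak{S}_{n}$.

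The remaining work is the same bookkeeping as in Theorem~\ref{t-udr}. I would rewrite the coefficient appearing in Corollary~\ref{c-udrncsf} as
\[
\frac{2^{\udr(L)-1}t^{\udr(L)}(1+t^{2})^{n-\udr(L)}}{(1-t)^{2}(1-t^{2})^{n-1}}=\frac{(1+t^{2})^{n}}{2(1-t)^{2}(1-t^{2})^{n-1}}\left(\frac{2t}{1+t^{2}}\right)^{\udr(L)},
\]
so that the inner sum over $\mathfrak{S}_{n}$ becomes $P_{n}^{(\inv,\udr)}\!\big(q,\tfrac{2t}{1+t^{2}}\big)$. Multiplying through by $1-t$ and simplifying the leftover constant via the identity $\dfrac{(1+t^{2})^{n}}{2(1-t)(1-t^{2})^{n-1}}=\dfrac{1+t}{2}\Big(\dfrac{1+t^{2}}{1-t^{2}}\Big)^{n}$ yields (\ref{e-qinvudr1}) directly.

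To obtain the equivalent form (\ref{e-qinvudr2}), I would replace $t$ by $v$ throughout (\ref{e-qinvudr1}), where $v=\frac{1-\sqrt{1-t^{2}}}{t}$ is the power-series root of $t(1+v^{2})=2v$ vanishing at $t=0$; since $\frac{2v}{1+v^{2}}=t$, the argument of $P_{n}^{(\inv,\udr)}$ becomes $t$. Then replacing $x$ by $\frac{1-v^{2}}{1+v^{2}}x$ absorbs the factor $\big(\frac{1+v^{2}}{1-v^{2}}\big)^{n}$ against $x^{n}/[n]_{q}!$, and subtracting $1$ and dividing by $(1+v)/2$ isolates $\sum_{n\geq1}P_{n}^{(\inv,\udr)}(q,t)\,x^{n}/[n]_{q}!$, giving (\ref{e-qinvudr2}).

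I do not expect a genuine obstacle here: every step is either an invocation of an already-established result (Corollary~\ref{c-udrncsf}, Lemma~\ref{l-Phiq}) or an elementary manipulation of formal power series. The only points demanding mild care are the constant-simplification identity above and the verification that $v=\frac{1-\sqrt{1-t^{2}}}{t}$ is the correct formal-power-series inverse of $t\mapsto \frac{2t}{1+t^{2}}$ (the branch of $\sqrt{1-t^{2}}$ with constant term $1$). Both are routine, and the final formula can be sanity-checked by expanding through, say, $n=3$.
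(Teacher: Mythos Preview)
Your proposal is correct and follows essentially the same approach as the paper's proof: apply $\Phi_{q}$ to Corollary~\ref{c-udrncsf} evaluated at $x=1$, multiply by $1-t$ to obtain (\ref{e-qinvudr1}), and then perform the substitution $x\mapsto \frac{1-t^{2}}{1+t^{2}}x$ together with the inversion $t\leftrightarrow v$ to isolate the $q$-exponential generating function and arrive at (\ref{e-qinvudr2}). The only cosmetic difference is that the paper rescales $x$ before substituting for $t$, whereas you swap $t\mapsto v$ first; the steps are otherwise identical.
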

\begin{proof}
Apply the homomorphism $\Phi_{q}$ to Corollary \ref{c-udrncsf} evaluated
at $x=1$; then multiplying both sides by $1-t$ yields (\ref{e-qinvudr1}).

Next, replace $x$ by $x(1-t^{2})/(1+t^{2})$ to get 
\begin{align*}
\frac{(1-t)\left(1+t\exp_{q}\left(\frac{1-t^{2}}{1+t^{2}}x\right)\right)}{1-t^{2}\exp_{q}\left(\frac{1-t^{2}}{1+t^{2}}x\right)\Exp_{q}\left(\frac{1-t^{2}}{1+t^{2}}\right)} & =1+\frac{1+t}{2}\sum_{n=1}^{\infty}P_{n}^{(\inv,\udr)}\left(q,\frac{2t}{1+t^{2}}\right)\frac{x^{n}}{[n]_{q}!}.
\end{align*}
Then rearranging some terms and making the same substitution as in
the proof of Theorem \ref{t-udr} yields (\ref{e-qinvudr2}).
\end{proof}
Recall that when setting $y=1$ in Lemma \ref{l-udrdesncsf}, all
instances of the statistics $\lpk$ and $\val$ either cancel out
or reduce to $\udr$. This is not possible in the general form of
Lemma \ref{l-udrdesncsf}, so we cannot directly work with the polynomials
$P_{n}^{(\udr,\des)}(y,t)=\sum_{\pi\in\mathfrak{S}_{n}}y^{\udr(\pi)}t^{\des(\pi)}$.
Since the statistics $(\udr,\des)$ and $(\lpk,\val,\des)$ are equivalent,
we will instead give results for the polynomials $P_{n}^{(\lpk,\val,\des)}(y,z,t)=\sum_{\pi\in\mathfrak{S}_{n}}y^{\lpk(\pi)}z^{\val(\pi)}t^{\des(\pi)}$
and their $q$-analogues 
\[
P_{n}^{(\inv,\lpk,\val,\des)}(q,y,z,t)\coloneqq\sum_{\pi\in\mathfrak{S}_{n}}q^{\inv(\pi)}y^{\lpk(\pi)}z^{\val(\pi)}t^{\des(\pi)}.
\]

\begin{thm}
\label{t-lpkvaldes} For $n\geq1$, we have
\begin{multline*}
\frac{(1+y)^{n}}{t}A_{n}(t^{2})+\sum_{k=0}^{n}{n \choose k}(1+y)^{k}(1-t^{2})^{n-k}A_{k}(t^{2})=(1+yt)(1+t)(1+yt^{2})^{n-1}\\
\times P_{n}^{(\lpk,\val,\des)}\left(\frac{t(1+y)(y+t)}{(y+t^{2})(1+yt)},\frac{t(1+y)(1+yt)}{(1+yt^{2})(y+t)},\frac{y+t^{2}}{1+yt^{2}}\right).
\end{multline*}
\end{thm}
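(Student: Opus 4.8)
The plan is to follow the template of the proofs of Theorems \ref{t-pkdes}, \ref{t-lpkdes}, and \ref{t-udr}: specialize the noncommutative symmetric function identity of Lemma \ref{l-udrdesncsf} at $x=1$, apply the homomorphism $\Phi$, and then reshape the resulting exponential generating function so that the exponential generating function for the Eulerian polynomials appears. Concretely, since $\Phi$ is an algebra homomorphism with $\Phi(\mathbf{h}(1))=e^{x}$, and since $\mathbf{e}(y)=\mathbf{h}(-y)^{-1}$ by (\ref{e-ehr}) gives $\Phi(\mathbf{e}(y))=\bigl(\sum_{n\geq 0}\Phi(\mathbf{h}_{n})(-y)^{n}\bigr)^{-1}=(e^{-yx})^{-1}=e^{yx}$, evaluating Lemma \ref{l-udrdesncsf} at $x=1$ and applying $\Phi$ turns the left-hand side into $(1+te^{x})/(1-t^{2}e^{(1+y)x})$. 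On the right-hand side, Lemma \ref{l-Phi}(a) replaces $\sum_{L\vDash n}(\cdots)\mathbf{r}_{L}$ by $\sum_{\pi\in\mathfrak{S}_{n}}(\cdots)x^{n}/n!$ with the statistics read off of $\pi$.

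The next step is an exponent computation: rewriting $\udr=\lpk+\val+1$ via Lemma \ref{l-udr}(a) and comparing powers of $t$, $1+y$, $1+yt$, $y+t$, $1+yt^{2}$, and $y+t^{2}$, the per-permutation weight appearing in Lemma \ref{l-udrdesncsf} collapses to
\[
t\,(1+yt^{2})^{n-1}(1+yt)\left(\frac{t(1+y)(y+t)}{(y+t^{2})(1+yt)}\right)^{\!\!\lpk(\pi)}\!\left(\frac{t(1+y)(1+yt)}{(1+yt^{2})(y+t)}\right)^{\!\!\val(\pi)}\!\left(\frac{y+t^{2}}{1+yt^{2}}\right)^{\!\!\des(\pi)},
\]
so summing over $\mathfrak{S}_{n}$ produces $t(1+yt^{2})^{n-1}(1+yt)\,P_{n}^{(\lpk,\val,\des)}$ at the three arguments in the statement. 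This yields the exponential generating function identity
\[
\frac{1+te^{x}}{1-t^{2}e^{(1+y)x}}=\frac{1}{1-t}+\sum_{n=1}^{\infty}\frac{t(1+yt^{2})^{n-1}(1+yt)}{(1-t)(1-t^{2})^{n}}\,P_{n}^{(\lpk,\val,\des)}(\cdots)\,\frac{x^{n}}{n!}.
\]

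Finally I would multiply both sides by $1-t$ and replace $x$ by $(1-t^{2})x/(1+y)$. The left-hand denominator becomes $1-t^{2}e^{(1-t^{2})x}$, and because $\dfrac{1-t}{1-t^{2}e^{(1-t^{2})x}}=\dfrac{1}{1+t}\sum_{n\geq 0}A_{n}(t^{2})\dfrac{x^{n}}{n!}$ (the Eulerian exponential generating function with $t$ replaced by $t^{2}$, rescaled by $\tfrac{1-t}{1-t^{2}}$), the left-hand side factors as $\tfrac{1}{1+t}\bigl(\sum_{n}A_{n}(t^{2})\tfrac{x^{n}}{n!}\bigr)\bigl(1+te^{(1-t^{2})x/(1+y)}\bigr)$. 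Expanding the second factor as a power series and forming the Cauchy product, the coefficient of $x^{n}/n!$ on the left is $\tfrac{1}{1+t}\bigl(A_{n}(t^{2})+t\sum_{k=0}^{n}{n \choose k}(\tfrac{1-t^{2}}{1+y})^{n-k}A_{k}(t^{2})\bigr)$, while on the right it is $\tfrac{t(1+yt^{2})^{n-1}(1+yt)}{(1+y)^{n}}P_{n}^{(\lpk,\val,\des)}(\cdots)$. Equating these for $n\geq 1$ and multiplying both sides by $(1+t)(1+y)^{n}/t$ gives exactly the stated identity.

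The only step that takes real care is the exponent bookkeeping that collapses the per-permutation weight of Lemma \ref{l-udrdesncsf} into the displayed monomial; once $\udr$ is rewritten via Lemma \ref{l-udr}(a) this is purely mechanical, so I do not anticipate a genuine obstacle. Everything else is the same generating-function manipulation already carried out repeatedly in this section.
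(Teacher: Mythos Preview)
Your proposal is correct and follows exactly the approach the paper indicates: apply $\Phi$ to Lemma \ref{l-udrdesncsf} at $x=1$, use Lemma \ref{l-udr}(a) to rewrite $\udr=\lpk+\val+1$ so the weight factors into the three displayed ratios times $t(1+yt)(1+yt^{2})^{n-1}$, then multiply by $1-t$, substitute $x\mapsto (1-t^{2})x/(1+y)$, and extract coefficients. The paper omits the details but explicitly says the proof proceeds in the same way as Theorem \ref{t-udr} using Lemma \ref{l-udrdesncsf} in place of Corollary \ref{c-udrncsf}, which is precisely what you do.
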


\begin{thm}
\label{t-lpkvaldesq} \textup{
\begin{multline*}
\frac{(1-t)(1+t\exp_{q}(x))}{1-t^{2}\exp_{q}(x)\Exp_{q}(yx)}=1+t(1+yt)\sum_{n=1}^{\infty}\frac{(1+yt^{2})^{n-1}}{(1-t^{2})^{n}}\\
\times P_{n}^{(\inv,\lpk,\val,\des)}\left(q,\frac{t(1+y)(y+t)}{(y+t^{2})(1+yt)},\frac{t(1+y)(1+yt)}{(1+yt^{2})(y+t)},\frac{y+t^{2}}{1+yt^{2}}\right)\frac{x^{n}}{[n]_{q}!}
\end{multline*}
}
\end{thm}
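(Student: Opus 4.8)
The plan is to mimic the proofs of Theorems~\ref{t-pkdesq}, \ref{t-lpkdesq}, and especially \ref{t-udrq}: the desired formula should drop out of the noncommutative symmetric function identity of Lemma~\ref{l-udrdesncsf} upon applying the homomorphism $\Phi_q$. Indeed, Theorem~\ref{t-udrq} was obtained by applying $\Phi_q$ to the $y=1$ specialization (Corollary~\ref{c-udrncsf}), and Theorem~\ref{t-lpkvaldesq} is simply the ``un-specialized'' version, so the only genuinely new ingredient is carrying the parameter $y$ (and the separate statistics $\lpk$, $\val$) through the computation rather than collapsing them to $\udr$.

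Concretely, I would first evaluate Lemma~\ref{l-udrdesncsf} at $x=1$ and apply $\Phi_q$ to both sides. Since $\Phi_q$ is an $F(q)$-algebra homomorphism, it converts the product and the geometric series $(1-t^2\mathbf{h}(1)\mathbf{e}(y))^{-1}$ on the left-hand side into the corresponding products and geometric series of power series; by Lemma~\ref{l-Phiq}(b),(c) we have $\Phi_q(\mathbf{h}(1))=\exp_q(x)$ and $\Phi_q(\mathbf{e}(y))=\Exp_q(yx)$ (the latter because $\mathbf{e}_n$ is homogeneous of degree $n$, so $\Phi_q(\mathbf{e}_n)=q^{\binom n2}x^n/[n]_q!$), whence the left-hand side becomes
\[
(1-t^2\exp_q(x)\Exp_q(yx))^{-1}(1+t\exp_q(x)).
\]
On the right-hand side, Lemma~\ref{l-Phiq}(a) gives $\Phi_q(\mathbf{r}_L)=\beta_q(L)\,x^n/[n]_q!$, and since the coefficient attached to each $\mathbf{r}_L$ depends only on $\udr(L)$, $\des(L)$, $\lpk(L)$, $\val(L)$, and $n$, summing $\beta_q(L)$ over all descent compositions $L\vDash n$ turns that coefficient into a sum over $\pi\in\mathfrak S_n$ weighted by $q^{\inv(\pi)}$. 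Multiplying through by $1-t$ then produces an identity whose left-hand side is exactly that of the theorem, and whose right-hand side is $1$ plus $\sum_{n\ge1}\sum_{\pi\in\mathfrak S_n}q^{\inv(\pi)}$ times the monomial of Lemma~\ref{l-udrdesncsf}, divided by $(1-t^2)^n$.

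It then remains to rewrite that monomial in the claimed form. Using Lemma~\ref{l-udr}(a), $\udr(\pi)=\lpk(\pi)+\val(\pi)+1$, I would re-express the exponents of $t$, $1+y$, $y+t$, $1+yt$, $y+t^2$, and $1+yt^2$ as linear functions of $\lpk(\pi)$, $\val(\pi)$, $\des(\pi)$, and $n$, pull out the $\pi$-independent factor $t(1+yt)(1+yt^2)^{n-1}$, and check that what remains is
\[
\left(\frac{t(1+y)(y+t)}{(y+t^2)(1+yt)}\right)^{\lpk(\pi)}\left(\frac{t(1+y)(1+yt)}{(1+yt^2)(y+t)}\right)^{\val(\pi)}\left(\frac{y+t^2}{1+yt^2}\right)^{\des(\pi)};
\]
summing over $\pi\in\mathfrak S_n$ then collapses the inner sum to $P_n^{(\inv,\lpk,\val,\des)}$ at the stated arguments, and collecting the prefactor $(1+yt^2)^{n-1}/(1-t^2)^n$ with the constant $t(1+yt)$ gives the theorem exactly as stated. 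I expect this final exponent-matching step to be the only real obstacle, though it is routine rather than conceptual: each of the six factors above contributes to a single power determined linearly by $\lpk$, $\val$, $\des$, and $n$, so it is entirely mechanical (and error-prone mainly in bookkeeping). As sanity checks, setting $y=1$ and again invoking $\lpk+\val+1=\udr$ should recover Theorem~\ref{t-udrq}, while setting $q=1$ and comparing exponential generating functions (using that $\exp_1=\Exp_1=e^x$) should recover Theorem~\ref{t-lpkvaldes}.
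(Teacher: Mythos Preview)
Your proposal is correct and is essentially identical to the paper's own (omitted) argument: the paper explicitly states that Theorem~\ref{t-lpkvaldesq} follows just like Theorem~\ref{t-udrq} but starting from Lemma~\ref{l-udrdesncsf} instead of its $y=1$ specialization, and that is exactly what you do, including the use of Lemma~\ref{l-udr}(a) to resolve the exponents.
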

We omit the proofs of the above two theorems as they follow in essentially
the same way as the proofs of Theorems \ref{t-udr} and \ref{t-udrq},
except that we would use Lemma \ref{l-udrdesncsf} rather than its
specialization (Corollary \ref{c-udrncsf}). Unlike in these theorems,
however, it is not possible to invert the identities to give an explicit
expression for $P_{n}^{(\lpk,\val,\des)}(y,z,t)$ or for the $q$-exponential
generating function for $P_{n}^{(\inv,\lpk,\val,\des)}(q,y,z,t)$.

Finally, we relate the $(\lpk,\val,\des)$ polynomials to the refined
flag descent polynomials $F_{n}(y,t)=\sum_{\pi\in\mathfrak{B}_{n}}y^{\neg(\pi)}t^{\fdes(\pi)}$,
which specializes to a relation between the $\udr$ polynomials and
the flag descent polynomials $F_{n}(t)=\sum_{\pi\in\mathfrak{B}_{n}}t^{\fdes(\pi)}$.
\begin{thm}
\label{t-lpkvaldesb} For $n\geq1$, we have 
\[
F_{n}(y,t)=(1+yt)(1+yt^{2})^{n-1}P_{n}^{(\lpk,\val,\des)}\left(\frac{t(1+y)(y+t)}{(y+t^{2})(1+yt)},\frac{t(1+y)(1+yt)}{(1+yt^{2})(y+t)},\frac{y+t^{2}}{1+yt^{2}}\right).
\]
\end{thm}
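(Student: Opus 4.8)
The plan is to derive this identity directly from Theorem~\ref{t-lpkvaldes} together with Theorem~\ref{t-fnan}, in exactly the same way that Theorem~\ref{t-lpkdesb} follows from Theorems~\ref{t-bnan} and~\ref{t-lpkdes}. First I would recall the statement of Theorem~\ref{t-fnan}, which reads
\[
F_n(y,t) = \frac{1}{1+t}\Big(\frac{(1+y)^n}{t}A_n(t^2) + \sum_{k=0}^n \binom{n}{k}(1+y)^k(1-t^2)^{n-k}A_k(t^2)\Big),
\]
and observe that the quantity inside the large parentheses is precisely the left-hand side of the identity in Theorem~\ref{t-lpkvaldes}.

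Next I would substitute. Theorem~\ref{t-lpkvaldes} asserts that this same quantity equals
\[
(1+yt)(1+t)(1+yt^2)^{n-1}\,P_n^{(\lpk,\val,\des)}\!\left(\frac{t(1+y)(y+t)}{(y+t^2)(1+yt)},\,\frac{t(1+y)(1+yt)}{(1+yt^2)(y+t)},\,\frac{y+t^2}{1+yt^2}\right),
\]
and plugging this into the formula from Theorem~\ref{t-fnan} cancels the factor $1+t$, leaving exactly the claimed identity. The only point that needs checking is that the triple of rational arguments of $P_n^{(\lpk,\val,\des)}$ appearing in the two theorems is literally the same, which is true by inspection.

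Since both ingredients are already established, there is essentially no obstacle here; all of the real work went into proving Theorems~\ref{t-lpkvaldes} and~\ref{t-fnan}. A self-contained alternative would apply the homomorphism $\Phi$ to Lemma~\ref{l-udrdesncsf} evaluated at $x=1$ and compare the resulting exponential generating function with the one for $F_n(y,t)$ given in Theorem~\ref{t-fnegf} after a suitable substitution in $x$, but chaining the two cited theorems is cleaner. Finally, the asserted specialization relating the $\udr$ polynomials to $F_n(t)$ follows by setting $y=1$ and using Lemma~\ref{l-udr} to rewrite the pair $(\lpk,\val)$ in terms of $\udr$, exactly as in the $y=1$ specializations of the other results in this section.
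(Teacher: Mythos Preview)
Your proposal is correct and follows precisely the paper's own approach: the paper's proof simply states that the result follows immediately from Theorems~\ref{t-fnan} and~\ref{t-lpkvaldes}, which is exactly the substitution-and-cancellation argument you describe.
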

\begin{proof}
Follows immediately from Theorems \ref{t-fnan} and \ref{t-lpkvaldes}.
\end{proof}
We give a combinatorial proof of this result in Subsection 5.3.
\begin{cor}
\label{c-fudr} For $n\geq1$, we have 
\begin{equation}
F_{n}(t)=\frac{(1+t)(1+t^{2})^{n}}{2t}P_{n}^{\udr}\left(\frac{2t}{1+t^{2}}\right).\label{e-udrf1}
\end{equation}
Equivalently, 
\begin{equation}
P_{n}^{\udr}(t)=\frac{2v}{(1+v)(1+v^{2})^{n}}F_{n}(v)\label{e-udrf2}
\end{equation}
where $v=\frac{1-\sqrt{1-t^{2}}}{t}$.
\end{cor}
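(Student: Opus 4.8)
The plan is to obtain \eqref{e-udrf1} by specializing Theorem~\ref{t-lpkvaldesb} to $y=1$, and then to invert the resulting identity to reach \eqref{e-udrf2}. (An even quicker route to \eqref{e-udrf1} is available and I will note it, but the specialization of Theorem~\ref{t-lpkvaldesb} is the natural one here, mirroring how Theorem~\ref{t-lpkdesb} specializes at $y=1$.)

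First I would set $y=1$ in Theorem~\ref{t-lpkvaldesb}. A direct computation shows that both of the first two arguments of $P_{n}^{(\lpk,\val,\des)}$ collapse to $\frac{2t}{1+t^{2}}$, the third argument becomes $1$, and the prefactor $(1+yt)(1+yt^{2})^{n-1}$ becomes $(1+t)(1+t^{2})^{n-1}$, so that
\[
F_{n}(t)=(1+t)(1+t^{2})^{n-1}\,P_{n}^{(\lpk,\val,\des)}\!\left(\frac{2t}{1+t^{2}},\frac{2t}{1+t^{2}},1\right).
\]
Next I would invoke Lemma~\ref{l-udr}(a): since $\lpk(\pi)+\val(\pi)=\udr(\pi)-1$ for every nonempty $\pi\in\mathfrak{S}_{n}$, setting the third variable of $P_{n}^{(\lpk,\val,\des)}$ to $1$ and the first two variables equal to a common value $s$ gives
\[
P_{n}^{(\lpk,\val,\des)}(s,s,1)=\sum_{\pi\in\mathfrak{S}_{n}}s^{\udr(\pi)-1}=\frac{1}{s}\,P_{n}^{\udr}(s).
\]
Applying this with $s=\frac{2t}{1+t^{2}}$ and absorbing the resulting factor $\frac{1+t^{2}}{2t}$ into the prefactor (using $(1+t^{2})^{n-1}(1+t^{2})=(1+t^{2})^{n}$) yields \eqref{e-udrf1}. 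As a shortcut, \eqref{e-udrf1} also drops out immediately from substituting Theorem~\ref{t-udr} into the identity \eqref{e-fnan}.

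Finally, to pass from \eqref{e-udrf1} to \eqref{e-udrf2} I would relabel the variable of \eqref{e-udrf1} from $t$ to $v$ and observe that $v=\frac{1-\sqrt{1-t^{2}}}{t}$ is exactly the branch of the solution of $\frac{2v}{1+v^{2}}=t$ that sends $t=0$ to $v=0$; substituting this relation into the relabeled identity replaces $P_{n}^{\udr}\!\left(\frac{2v}{1+v^{2}}\right)$ by $P_{n}^{\udr}(t)$, and solving for $P_{n}^{\udr}(t)$ gives \eqref{e-udrf2}. The argument is entirely routine; the only step demanding any attention is verifying the inversion $v=\frac{1-\sqrt{1-t^{2}}}{t}\iff t=\frac{2v}{1+v^{2}}$ together with the correct choice of square-root branch, which I regard as the (minor) main obstacle.
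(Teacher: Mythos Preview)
Your argument is correct and is essentially the paper's own proof: the paper derives \eqref{e-udrf1} by setting $y=1$ in Theorem~\ref{t-lpkvaldesb} and simplifying (implicitly using Lemma~\ref{l-udr}(a) exactly as you do), and then obtains \eqref{e-udrf2} by the same inversion $t=\frac{2v}{1+v^{2}}\leftrightarrow v=\frac{1-\sqrt{1-t^{2}}}{t}$ used in Theorem~\ref{t-udr}. Your remark that combining Theorem~\ref{t-udr} with \eqref{e-fnan} gives a shortcut is also valid, though the paper does not take that route here.
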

\begin{proof}
We obtain (\ref{e-udrf1}) by taking the preceding theorem, setting
$y=1$, and rearranging a few terms. Then (\ref{e-udrf2}) is obtained
from (\ref{e-udrf1}) by the same substitution as before.
\end{proof}

\subsection{Two remarks: the inverse major index and alternating analogues}

We end this section with two important remarks.

First, our formulas for $q$-analogues of descent statistic polynomials
are also valid for the ``inverse major index''. For a permutation
$\pi\in\mathfrak{S}$, the \textit{major index} $\maj(\pi)$ is defined
to be the sum of its descents, and the \textit{inverse major index}
$\imaj(\pi)$ is the major index of its inverse when considered as
an element of the symmetric group. For example, take $\pi=85712643$,
whose inverse is $\pi^{-1}=45872631$. Since $\Des(\pi)=\{1,3,6,7\}$
and $\Des(\pi^{-1})=\{3,4,6,7\}$, the major index of $\pi$ is 
\[
\maj(\pi)=1+3+6+7=17
\]
whereas the inverse major index of $\pi$ is 
\[
\imaj(\pi)=\maj(\pi^{-1})=3+4+6+7=20.
\]

A remarkable result by Foata and Sch\"utzenberger \cite{Foata1978}
states that the inversion number $\inv$ and inverse major index $\imaj$
are equidistributed over descent classes. That is, for any $S\subseteq[n-1]$,
\[
\sum_{\substack{\pi\in\mathfrak{S}_{n}\\
\Des(\pi)=S
}
}q^{\inv(\pi)}=\sum_{\substack{\pi\in\mathfrak{S}_{n}\\
\Des(\pi)=S
}
}q^{\imaj(\pi)};
\]
this is equivalent to saying that the polynomial $\beta_{q}(L)$---defined
in Subsection 2.2---counting $n$-permutations with descent composition
$L$ by inversion number also counts these same permutations by inverse
major index. It follows that Theorems \ref{t-pkdesq}, \ref{t-lpkdesq},
\ref{t-udrq}, and \ref{t-lpkvaldesq} and Corollaries \ref{c-pkq}
and \ref{c-lpkq} can be restated for the inverse major index, that
is, by replacing every instance of $\inv$ with $\imaj$.

Second, we mention that there exist ``alternating analogues'' for
nearly every concept in this paper relating to descents, as well as
for several results in this section. Given a permutation $\pi\in\mathfrak{S}_{n}$,
we say that $i\in[n-1]$ is an \textit{alternating descent} of $\pi$
if $i$ is an odd descent or an even ascent, and an \textit{alternating
run} of $\pi$ is a maximal consecutive subsequence of $\pi$ containing
no alternating descents. Then the alternating descent set, alternating
descent composition, and the alternating descent number $\altdes$
can all be defined in the obvious way. The distribution of $\altdes$
over $\mathfrak{S}_{n}$ is given by the polynomial 
\[
\hat{A}_{n}(t)\coloneqq\sum_{\pi\in\mathfrak{S}_{n}}t^{\altdes(\pi)+1},
\]
the alternating analogue of the $n$th Eulerian polynomial. The exponential
generating function for the alternating Eulerian polynomials is 
\[
\sum_{n=0}^{\infty}\hat{A}_{n}(t)\frac{x^{n}}{n!}=\frac{1-t}{1-t(\sec((1-t)x)+\tan((1-t)x))},
\]
which is the exponential generating function for the ordinary Eulerian
polynomials with the exponential function $e^{x}$ replaced by $\sec x+\tan x$.

There is in fact an alternating analogue for every descent statistic.
For example, since $i$ is a peak of $\pi$ precisely when $i-1$
is an ascent and $i$ is a descent, we could define $i$ to be an
alternating peak if $i-1$ is an ``alternating ascent'' and $i$ is
an alternating descent, that is, if $\pi_{i-1}>\pi_{i}>\pi_{i+1}$
and $i$ is odd or if $\pi_{i-1}<\pi_{i}<\pi_{i+1}$ and $i$ is even.

Alternating descents were introduced by Chebikin \cite{chebikin}
and later brought into our noncommutative symmetric function framework
by Gessel and the present author \cite{Gessel2014}. In \cite{Gessel2014},
the authors defined a third homomorphism $\hat{\Phi}:\mathbf{Sym}\rightarrow F[[x]]$
by $\hat{\Phi}(\mathbf{h}_{n})=E_{n}x^{n}/n!$, where $E_{n}$ is
the $n$th Euler number defined by $\sum_{n=0}^{\infty}E_{n}x^{n}/n!=\sec x+\tan x$,
and showed that $\hat{\Phi}(\mathbf{r}_{L})=\hat{\beta}(L)x^{n}/n!$,
where $\hat{\beta}(L)$ is the number of $n$-permutations with alternating
descent composition $L$. Moreover, it is not hard to show that $\hat{\Phi}(\mathbf{h}(1))=\hat{\Phi}(\mathbf{e}(1))=\sec x+\tan x$,
and one can use these facts to obtain alternating analogues of Theorems
\ref{t-pkdes}, \ref{t-lpkdes}, \ref{t-udr}, and \ref{t-lpkvaldes}
that express alternating analogues of descent statistic polynomials
in terms of the alternating Eulerian polynomials.

\section{Proofs via group actions}

\subsection{The modified Foata--Strehl action and $(\protect\pk,\protect\des)$
polynomials}

In Theorem \ref{t-pkdes}, we established a connection between the
$n$th Eulerian polynomial---the polynomial encoding the distribution
of the descent number $\des$ over $\mathfrak{S}_{n}$---and the
polynomial encoding the joint distribution of the peak number $\pk$
and $\des$ over $\mathfrak{S}_{n}$. We can give a generalization
of this result which establishes the same connection between polynomials
encoding the distribution of these statistics over subsets of $\mathfrak{S}_{n}$
that are invariant under a certain group action. We begin by defining
this group action and later use it to prove the aforementioned generalization
of Theorem \ref{t-pkdes}.

In what follows, it will be convenient to alter the definitions of
descent, peak, and valley to refer to the letter and not the index;
that is, given a permutation $\pi=\pi_{1}\pi_{2}\cdots\pi_{n}$, we
call $\pi_{i}$ (rather than $i$) a \textit{descent} of $\pi$ if
$\pi_{i}>\pi_{i+1}$, a \textit{peak} of $\pi$ if $\pi_{i-1}<\pi_{i}>\pi_{i+1}$,
and a \textit{valley} of $\pi$ if $\pi_{i-1}>\pi_{i}<\pi_{i+1}$.\footnote{Note that the statistics $\des$, $\pk$, and $\val$ are the same
regardless of which definition is used.} In addition, $\pi_{i}$ is called a \textit{double ascent} of $\pi$
if $\pi_{i-1}<\pi_{i}<\pi_{i+1}$ and is called a \textit{double descent}
of $\pi$ if $\pi_{i-1}>\pi_{i}>\pi_{i+1}$. Denote the number of
double ascents of $\pi$ by $\dasc(\pi)$ and the number of double
descents of $\pi$ by $\ddes(\pi)$. We will be working with peaks,
valleys, double ascents, and double descents of the word $\infty\pi\infty$,
where $\infty$ is a letter defined to be greater than any integer
in the usual ordering. For example, if $\pi$ is a permutation with
$\pi_{1}<\pi_{2}$, then $\pi_{1}$ is a valley of $\infty\pi\infty$
even though it is not a valley of $\pi$.

Let $\pi$ be an $n$-permutation and let $x\in[n]$. We may write
$\pi=w_{1}w_{2}xw_{4}w_{5}$ where $w_{2}$ is the maximal consecutive
subword immediately to the left of $x$ whose letters are all smaller
than $x$, and $w_{4}$ is the maximal consecutive subword immediately
to the right of $x$ whose letters are all smaller than $x$; this
is called the \textit{$x$-factorization} of $\pi$. For example,
if $\pi=467125839$ and $x=5$, then $\pi$ is the concatenation of
$w_{1}=467$, $w_{2}=12$, $x=5$, the empty word $w_{4}$, and $w_{5}=839$.

Define $\varphi_{x}:\mathfrak{S}_{n}\rightarrow\mathfrak{S}_{n}$
by $\varphi_{x}(\pi)=w_{1}w_{4}xw_{2}w_{5}$. It is easy to see that
$\varphi_{x}$ is an involution, and that for all $x,y\in[n]$, the
involutions $\varphi_{x}$ and $\varphi_{y}$ commute with each other.
These involutions were defined by Foata and Strehl \cite{Foata1974}
in the context of the original Foata--Strehl action, but this was
modified by Br\"and\'en \cite{Braenden2008} in the following way.
If $x\in[n]$, then define $\varphi_{x}^{\prime}:\mathfrak{S}_{n}\rightarrow\mathfrak{S}_{n}$
by
\[
\varphi_{x}^{\prime}(\pi)=\begin{cases}
\varphi_{x}(\pi), & \mbox{if }x\mbox{ is a double ascent or double descent of \ensuremath{\infty\pi\infty},}\\
\pi, & \mbox{if }x\mbox{ is a peak or valley of \ensuremath{\infty\pi\infty}.}
\end{cases}
\]
Equivalently, $\varphi_{x}^{\prime}(\pi)=\varphi_{x}(\pi)$ if exactly
one of $w_{2}$ and $w_{4}$ is nonempty, and $\varphi_{x}^{\prime}(\pi)=\pi$
otherwise. Returning to the example $\pi=467125839$ and $x=5$, we
have $\varphi_{x}^{\prime}(\pi)=467512839$ because $x=5$ is a double
ascent. However, if $x=8$, then $\varphi_{x}^{\prime}(\pi)=\pi$
since $x=8$ is a peak. Note that if $x$ is a double ascent of $\pi$,
then it is a double descent of $\varphi_{x}^{\prime}(\pi)$, and vice
versa.

For any subset $S\subseteq[n]$, we define $\varphi_{S}^{\prime}:\mathfrak{S}_{n}\rightarrow\mathfrak{S}_{n}$
by $\varphi_{S}^{\prime}=\prod_{x\in S}\varphi_{x}^{\prime}$. These
$\varphi_{S}^{\prime}$ are also involutions, and for all $S,T\subseteq[n]$,
we still have that $\varphi_{S}^{\prime}$ commutes with $\varphi_{T}^{\prime}$.
The group $\mathbb{Z}_{2}^{n}$ acts on $\mathfrak{S}_{n}$ via the
involutions $\varphi_{S}^{\prime}$ over all $S\subseteq[n]$; this
group action is called the \textit{modified Foata}--\textit{Strehl
action}, abbreviated \textit{MFS-action}. The MFS-action can also
be characterized in terms of ``valley hopping'' (see \cite[p. 173]{Petersen2013}
or \cite[Section 4.2]{Petersen2015}), and can be defined for decreasing
binary trees (see \cite[p. 516]{Braenden2008}).

The involutions $\varphi_{S}^{\prime}$ were considered earlier by
Shapiro, Woan, and Getu \cite{Shapiro1983}, but they did not seem
to be aware of the connection to the work of Foata and Strehl. The
MFS-action was used much more extensively by Br\"and\'en, whose
results in \cite{Braenden2008} motivated much of the work in this
section. 

Given a set $\Pi$ of $n$-permutations, let 
\[
A(\Pi;t)\coloneqq\sum_{\pi\in\Pi}t^{\des(\pi)+1}
\]
be the $n$th descent polynomial restricted to permutations in $\Pi$
and let 
\[
P^{(\pk,\des)}(\Pi;y,t)\coloneqq\sum_{\pi\in\Pi}y^{\pk(\pi)+1}t^{\des(\pi)+1}
\]
be the $(\pk,\des)$ polynomial for permutations in $\Pi$.
\begin{thm}
\label{t-pkdesmfs} Suppose that $\Pi\subseteq\mathfrak{S}_{n}$ for
$n\geq1$ is invariant under the MFS-action. Then 
\begin{equation}
A(\Pi;t)=\left(\frac{1+yt}{1+y}\right)^{n+1}P^{(\pk,\des)}\left(\Pi;\frac{(1+y)^{2}t}{(y+t)(1+yt)},\frac{y+t}{1+yt}\right).\label{e-apkdesmfs}
\end{equation}
Equivalently, 
\begin{equation}
P^{(\pk,\des)}(\Pi;y,t)=\left(\frac{1+u}{1+uv}\right)^{n+1}A(\Pi;v)\label{e-pkdesamfs}
\end{equation}
where $u=\frac{1+t^{2}-2yt-(1-t)\sqrt{(1+t)^{2}-4yt}}{2(1-y)t}$ and
$v=\frac{(1+t)^{2}-2yt-(1+t)\sqrt{(1+t)^{2}-4yt}}{2yt}.$
\end{thm}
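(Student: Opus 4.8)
The plan is to exploit the orbit structure of the MFS-action together with the fact that the peak number is constant on each orbit. Since $\Pi$ is a disjoint union of MFS-orbits and both sides of (\ref{e-apkdesmfs}) are additive under this decomposition, it suffices to prove the identity when $\Pi$ is a single orbit $\mathcal{O}$.

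First I would collect the structural facts about the involutions $\varphi_{x}^{\prime}$ (essentially those of Br\"and\'en). Each $\varphi_{x}^{\prime}$ preserves the peaks and valleys of $\infty\pi\infty$ and, when $x$ is a double ascent of $\infty\pi\infty$, turns it into a double descent, and conversely. Hence all permutations in a fixed orbit $\mathcal{O}$ share a common number of peaks of $\infty\pi\infty$; since the peaks of $\infty\pi\infty$ coincide with the peaks of $\pi$, this number equals $\pk(\pi)$ for every $\pi\in\mathcal{O}$, and we write $p$ for it. The peaks and valleys of $\infty\pi\infty$ alternate and---because $\infty\pi\infty$ descends from its first letter and ascends to its last---begin and end with a valley, so there are exactly $p+1$ valleys; the remaining $n-2p-1$ letters are the double ascents and double descents of $\infty\pi\infty$, and these may be flipped independently by the $\varphi_{x}^{\prime}$. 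Thus $\lvert\mathcal{O}\rvert=2^{n-2p-1}$, and as $\pi$ runs over $\mathcal{O}$ the number $d$ of double descents of $\infty\pi\infty$ assumes each value $j\in\{0,1,\dots,n-2p-1\}$ exactly $\binom{n-2p-1}{j}$ times.

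Next I would observe that an index $i\in[n-1]$ is a descent of $\pi$ precisely when $\pi_{i}$ is a peak or a double descent of $\infty\pi\infty$, and that these occur only among $\pi_{1},\dots,\pi_{n-1}$, so $\des(\pi)=p+d$. Combining this with the count above gives, for the single orbit $\mathcal{O}$,
\begin{equation*}
A(\mathcal{O};t)=\sum_{j=0}^{n-2p-1}\binom{n-2p-1}{j}t^{p+j+1}=t^{p+1}(1+t)^{n-2p-1},
\end{equation*}
and, because $\pk(\pi)=p$ throughout $\mathcal{O}$,
\begin{equation*}
P^{(\pk,\des)}(\mathcal{O};y,t)=y^{p+1}A(\mathcal{O};t)=y^{p+1}t^{p+1}(1+t)^{n-2p-1}.
\end{equation*}

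Finally I would substitute $T=\dfrac{y+t}{1+yt}$ and $Y=\dfrac{(1+y)^{2}t}{(y+t)(1+yt)}$ into the last display and simplify using the two identities $1+T=\dfrac{(1+y)(1+t)}{1+yt}$ and $YT=\dfrac{(1+y)^{2}t}{(1+yt)^{2}}$, obtaining
\begin{equation*}
P^{(\pk,\des)}(\mathcal{O};Y,T)=(YT)^{p+1}(1+T)^{n-2p-1}=\frac{(1+y)^{n+1}}{(1+yt)^{n+1}}\,t^{p+1}(1+t)^{n-2p-1},
\end{equation*}
so that $\bigl(\tfrac{1+yt}{1+y}\bigr)^{n+1}P^{(\pk,\des)}(\mathcal{O};Y,T)=t^{p+1}(1+t)^{n-2p-1}=A(\mathcal{O};t)$. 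Summing over the orbits contained in $\Pi$ yields (\ref{e-apkdesmfs}), and the equivalent form (\ref{e-pkdesamfs}) follows by inverting the change of variables exactly as in the proof of Theorem \ref{t-pkdes}. The only genuine content lies in the structural facts of the second paragraph---the constancy of $\pk$ on orbits and the resulting factorization $A(\mathcal{O};t)=t^{p+1}(1+t)^{n-2p-1}$; once those are in place the rest is the same routine algebra as in Theorem \ref{t-pkdes}, so I expect the main obstacle to be marshalling the MFS-action facts cleanly rather than anything computational.
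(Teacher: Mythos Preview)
Your proof is correct and follows essentially the same approach as the paper: both reduce to a single MFS-orbit and exploit that $\pk$ is constant while the $n-2p-1$ double ascents/descents of $\infty\pi\infty$ flip independently. The only cosmetic difference is that you compute the closed forms $A(\mathcal{O};t)=t^{p+1}(1+t)^{n-2p-1}$ and $P^{(\pk,\des)}(\mathcal{O};y,t)=y^{p+1}t^{p+1}(1+t)^{n-2p-1}$ explicitly and then substitute, whereas the paper phrases the same orbit computation as a bijective marking identity before simplifying; the underlying combinatorics and algebra are identical.
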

Note that taking $\Pi=\mathfrak{S}_{n}$ yields Theorem \ref{t-pkdes}.
\begin{proof}
Let $\sigma$ be an $n$-permutation and let $\Orb(\sigma)=\{\,\varphi_{S}^{\prime}(\sigma)\mid S\subseteq[n]\,\}$
be the orbit of $\sigma$ under the MFS-action. Let $\check{\pi}=\infty\pi\infty$
and $\check{\sigma}=\infty\sigma\infty$, and for a subset $S\subseteq[n]$,
let $\pi_{S}=\varphi_{S}^{\prime}(\pi)$ and $\check{\pi}_{S}=\infty\varphi_{S}^{\prime}(\pi)\infty$.
First we wish to prove the identity 
\begin{equation}
\Big(\sum_{\pi\in\Orb(\sigma)}t^{\des(\pi)}\Big)(1+y)^{\dasc(\check{\sigma})+\ddes(\check{\sigma})}=\sum_{\pi\in\Orb(\sigma)}(1+yt)^{\dasc(\check{\pi})}(y+t)^{\ddes(\check{\pi})}t^{\pk(\check{\pi})},\label{e-mfs}
\end{equation}
which we do by showing that the two sides of the equation encode the
same objects.

We begin with the left-hand side. Each summand in the factor $\sum_{\pi\in\Orb(\sigma)}t^{\des(\pi)}$
corresponds to a permutation in the orbit of $\sigma$ weighted by
its descent number. Each summand in the factor $(1+y)^{\dasc(\check{\sigma})+\ddes(\check{\sigma})}$
corresponds to marking a subset of double ascents and double descents
of $\check{\sigma}$. Thus, the left-hand side counts permutations
in $\Orb(\sigma)$ where $t$ is weighting the descent number and
$y$ is weighting the number of marked letters.

Now, let us examine the right-hand side of (\ref{e-mfs}). Each term
on the right-hand side corresponds to taking a permutation $\pi\in\Orb(\sigma)$,
choosing a subset $S$ of double ascents and double descents of $\check{\pi}$,
applying $\varphi_{S}^{\prime}$ to $\pi$, marking the letters of
$S$ in $\pi_{S}$---which are all double ascents or double descents
of $\check{\pi}_{S}$---and weighting the marked letters by $y$
and the descents of $\pi_{S}$ by $t$. The $(1+yt)^{\dasc(\check{\pi})}$
factor corresponds to selecting the double ascents, and the $(y+t)^{\ddes(\check{\pi})}$
factor corresponds to selecting the double descents. The only remaining
descents of $\pi_{S}$ are the peaks of $\check{\pi}_{S}$, which
are precisely the peaks of $\check{\pi}$ since the MFS-action does
not affect peaks; this contributes the factor of $t^{\pk(\check{\pi})}$.
In summary, both sides of the equation count permutations in the orbit
of $\sigma$ with a marked subset $S$ of letters by the same weights,
but on the right-hand side, we are applying the involution $\varphi_{S}^{\prime}$
to each $\pi\in\Orb(\sigma)$ before doing the counting.

Next, observe the following:

\begin{itemize}
\item The number of peaks of $\check{\pi}$ is equal to $\pk(\pi)$. More
specifically, the peaks of $\pi$ are precisely the peaks of $\check{\pi}$,
since neither $\pi_{1}$ nor $\pi_{n}$ can be peaks of $\check{\pi}=\infty\pi\infty$.
\item The number of double descents of $\check{\pi}$ is equal to $\des(\pi)-\pk(\pi)$,
since the set of descents of $\pi$ is equal to the set of double
descents and peaks of $\check{\pi}$.
\item The number of double ascents of $\check{\pi}$ is equal to $n-\pk(\pi)-\des(\pi)-1$.
To see why this is true, first observe that the number of valleys
of $\check{\pi}$ is $\pk(\pi)+1$. Then, the double ascents of $\check{\pi}$
are the letters which are not peaks, valleys, or double descents of
$\check{\pi}$. Thus, 
\begin{align*}
\dasc(\check{\pi}) & =n-(\pk(\check{\pi})+\val(\check{\pi})+\ddes(\check{\pi}))\\
 & =n-(\pk(\pi)+\pk(\pi)+1+\des(\pi)-\pk(\pi))\\
 & =n-\pk(\pi)-\des(\pi)-1.
\end{align*}
\item The total number of double ascents and double descents of $\check{\sigma}$
is equal to $n-2\pk(\sigma)-1$, which follows from the previous two
points.
\end{itemize}
Therefore, we have the equation
\[
\Big(\sum_{\pi\in\Orb(\sigma)}t^{\des(\pi)}\Big)(1+y)^{n-2\pk(\sigma)-1}=\sum_{\pi\in\Orb(\sigma)}(1+yt)^{n-\pk(\pi)-\des(\pi)-1}(y+t)^{\des(\pi)-\pk(\pi)}t^{\pk(\pi)},
\]
and dividing both sides by $(1+y)^{n-2\pk(\sigma)-1}=(1+y)^{n-2\pk(\pi)-1}$
gives us 
\[
\sum_{\pi\in\Orb(\sigma)}t^{\des(\pi)}=\sum_{\pi\in\Orb(\sigma)}\frac{(1+yt)^{n-\pk(\pi)-\des(\pi)-1}(y+t)^{\des(\pi)-\pk(\pi)}t^{\pk(\pi)}}{(1+y)^{n-2\pk(\pi)-1}}.
\]
Then, summing over all orbits corresponding to permutations in $\Pi$
yields 
\[
\sum_{\pi\in\Pi}t^{\des(\pi)}=\sum_{\pi\in\Pi}\frac{(1+yt)^{n-\pk(\pi)-\des(\pi)-1}(y+t)^{\des(\pi)-\pk(\pi)}t^{\pk(\pi)}}{(1+y)^{n-2\pk(\pi)-1}},
\]
and multiplying both sides by $t$ yields 
\begin{align*}
A(\Pi;t) & =\sum_{\pi\in\Pi}\frac{(1+yt)^{n-\pk(\pi)-\des(\pi)-1}(y+t)^{\des(\pi)-\pk(\pi)}t^{\pk(\pi)+1}}{(1+y)^{n-2\pk(\pi)-1}}\\
 & =\left(\frac{1+yt}{1+y}\right)^{n+1}P^{(\pk,\des)}\left(\Pi;\frac{(1+y)^{2}t}{(y+t)(1+yt)},\frac{y+t}{1+yt}\right),
\end{align*}
which is (\ref{e-apkdesmfs}). We obtain (\ref{e-pkdesamfs}) by performing
the same substitution as in the proof of Theorem \ref{t-pkdes}.
\end{proof}
Setting $y=1$ in (\ref{e-apkdesmfs}) yields a main result of Br\"and\'en
\cite[Corollary 3.2]{Braenden2008}, which we state below for completeness.
For $\Pi\subseteq\mathfrak{S}_{n}$, let 
\[
P^{\pk}(\Pi;t)\coloneqq\sum_{\pi\in\Pi}t^{\pk(\pi)+1}.
\]

\begin{cor}
Suppose that $\Pi\subseteq\mathfrak{S}_{n}$ for $n\geq1$ is invariant
under the MFS-action. Then 
\[
A(\Pi;t)=\left(\frac{1+t}{2}\right)^{n+1}P^{\pk}\left(\Pi;\frac{4t}{(1+t)^{2}}\right).
\]
Equivalently, 
\[
P^{\pk}(\Pi;t)=\left(\frac{2}{1+v}\right)^{n+1}A(\Pi;v)
\]
where $v=\frac{2}{t}(1-\sqrt{1-t})-1$.
\end{cor}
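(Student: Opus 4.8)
The plan is to obtain the first identity by setting $y=1$ in Theorem \ref{t-pkdesmfs}, and then to deduce the ``equivalently'' form by inverting it, exactly as one passes from $(\ref{e-apk})$ to its restatement $P_{n}^{\pk}(t)=\bigl(2/(1+v)\bigr)^{n+1}A_{n}(v)$ in the introduction.

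First I would put $y=1$ in $(\ref{e-apkdesmfs})$. The prefactor $\bigl((1+yt)/(1+y)\bigr)^{n+1}$ becomes $\bigl((1+t)/2\bigr)^{n+1}$; the first argument of $P^{(\pk,\des)}$, namely $\frac{(1+y)^{2}t}{(y+t)(1+yt)}$, becomes $\frac{4t}{(1+t)^{2}}$; and the second argument $\frac{y+t}{1+yt}$ becomes $1$. Since $P^{(\pk,\des)}(\Pi; s, 1) = \sum_{\pi\in\Pi} s^{\pk(\pi)+1} = P^{\pk}(\Pi; s)$, equation $(\ref{e-apkdesmfs})$ collapses to
\[
A(\Pi; t) = \left(\frac{1+t}{2}\right)^{n+1} P^{\pk}\!\left(\Pi; \frac{4t}{(1+t)^{2}}\right),
\]
which is the first displayed identity. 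One should \emph{not} try to specialize the inverted form $(\ref{e-pkdesamfs})$ directly, since the formula there for $u$ carries the factor $2(1-y)t$ in its denominator and becomes an indeterminate $0/0$ at $y=1$; it is cleaner to invert the displayed identity above by hand.

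For the second identity I would solve $\frac{4v}{(1+v)^{2}} = t$ for $v$: clearing denominators gives the quadratic $tv^{2} + 2(t-2)v + t = 0$, and the root that vanishes as $t\to 0$ is $v = \frac{2}{t}\bigl(1-\sqrt{1-t}\bigr) - 1$, which is precisely the $v$ in the statement (and is a formal power series in $t$ with zero constant term). Substituting $v$ for $t$ in the displayed identity above gives $A(\Pi; v) = \bigl((1+v)/2\bigr)^{n+1} P^{\pk}(\Pi; t)$; dividing both sides by $\bigl((1+v)/2\bigr)^{n+1}$ yields $P^{\pk}(\Pi; t) = \bigl(2/(1+v)\bigr)^{n+1} A(\Pi; v)$, completing the proof. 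There is essentially no obstacle in this argument; the only point requiring any care is the choice of branch of the square root, which is pinned down by the requirement that $v$ lie in $t\,\mathbb{C}[[t]]$.
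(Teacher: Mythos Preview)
Your proposal is correct and follows exactly the approach indicated in the paper, which simply states that the corollary is obtained by setting $y=1$ in (\ref{e-apkdesmfs}). You supply the details the paper omits, including the observation that (\ref{e-pkdesamfs}) should not be specialized directly (because of the $1-y$ in the denominator of $u$) and the explicit inversion yielding the second form; these are precisely the computations a reader would carry out to fill in the paper's one-line justification.
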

Besides $\Pi=\mathfrak{S}_{n}$, other subsets $\Pi$ of $\mathfrak{S}_{n}$
invariant under the MFS-action include the set of $r$-stack-sortable
$n$-permutations $\mathfrak{S}_{n}^{r}$ (see \cite[Corollary 4.2]{Braenden2008})
for any $r\geq1$. Of particular interest is the set of 1-stack-sortable
$n$-permutations $\mathfrak{S}_{n}^{1}$, which Knuth \cite{Knuth1973}
showed to be precisely the set $\Av_{n}(231)$ of 231-avoiding $n$-permutations:
$n$-permutations such that there exist no $i<j<k$ for which $\pi_{k}<\pi_{i}<\pi_{j}$.
The descent polynomial for 231-avoiding $n$-permutations is known
to be the $n$th Narayana polynomial 
\[
N_{n}(t)\coloneqq\sum_{\pi\in\Av_{n}(231)}t^{\des(\pi)+1}=\frac{1}{n}\sum_{k=1}^{n}{n \choose k}{n \choose k-1}t^{k}
\]
for $n\geq1$. Hence, we have the following corollary.
\begin{cor}
\label{c-pkdes231} For $n\geq1$, we have \textup{
\begin{equation}
N_{n}(t)=\left(\frac{1+yt}{1+y}\right)^{n+1}P^{(\pk,\des)}\left(\Av_{n}(231);\frac{(1+y)^{2}t}{(y+t)(1+yt)},\frac{y+t}{1+yt}\right).\label{e-npkdes231}
\end{equation}
}Equivalently, \textup{
\begin{align*}
P^{(\pk,\des)}(\Av_{n}(231);y,t) & =\left(\frac{1+u}{1+uv}\right)^{n+1}N_{n}(v)
\end{align*}
}where $u=\frac{1+t^{2}-2yt-(1-t)\sqrt{(1+t)^{2}-4yt}}{2(1-y)t}$
and $v=\frac{(1+t)^{2}-2yt-(1+t)\sqrt{(1+t)^{2}-4yt}}{2yt}.$ 
\end{cor}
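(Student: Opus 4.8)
The plan is to obtain Corollary~\ref{c-pkdes231} as the specialization of Theorem~\ref{t-pkdesmfs} to $\Pi=\Av_n(231)$. Two facts make this immediate, and both have already been recorded above: the set $\Av_n(231)$ is invariant under the MFS-action, and its descent polynomial $A(\Av_n(231);t)$ is by definition the $n$th Narayana polynomial $N_n(t)$. So the argument amounts to checking the hypothesis of Theorem~\ref{t-pkdesmfs} and then reading off the conclusion.

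First I would justify the MFS-invariance of $\Av_n(231)$. By Knuth~\cite{Knuth1973}, the $231$-avoiding $n$-permutations are exactly the $1$-stack-sortable $n$-permutations $\mathfrak{S}_n^1$, and Br\"and\'en~\cite[Corollary 4.2]{Braenden2008} shows that $\mathfrak{S}_n^r$ is invariant under the MFS-action for every $r\geq 1$; taking $r=1$ gives the invariance of $\mathfrak{S}_n^1=\Av_n(231)$. (One could instead check directly that applying $\varphi_x^{\prime}$ to a $231$-avoiding permutation cannot create a $231$ pattern, using the structure of the $x$-factorization, but invoking the two cited results is cleaner and keeps the proof short.)

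Now applying Theorem~\ref{t-pkdesmfs} with $\Pi=\Av_n(231)$ yields, for $n\geq 1$,
\[
A(\Av_n(231);t)=\left(\frac{1+yt}{1+y}\right)^{n+1}P^{(\pk,\des)}\left(\Av_n(231);\frac{(1+y)^{2}t}{(y+t)(1+yt)},\frac{y+t}{1+yt}\right),
\]
and substituting $A(\Av_n(231);t)=N_n(t)$ gives~(\ref{e-npkdes231}). The equivalent form then follows exactly as in the proof of Theorem~\ref{t-pkdes}: put $u=\frac{(1+y)^{2}t}{(y+t)(1+yt)}$ and $v=\frac{y+t}{1+yt}$, solve for $y$ and $t$ in terms of $u$ and $v$ (e.g.\ with a computer algebra system), simplify, and finally exchange $(u,v)$ with $(y,t)$ so that the $(\pk,\des)$ polynomial carries the variables $y$ and $t$ as in its definition. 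Since every ingredient is already in place, there is no real obstacle here; the only conceptual point is to recognize $\Av_n(231)$ as an MFS-invariant family whose descent enumerator is the Narayana polynomial.
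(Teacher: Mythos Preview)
Your proposal is correct and matches the paper's own approach exactly: the paper derives Corollary~\ref{c-pkdes231} by applying Theorem~\ref{t-pkdesmfs} to $\Pi=\Av_n(231)$, invoking Knuth's identification $\Av_n(231)=\mathfrak{S}_n^1$ together with Br\"and\'en's MFS-invariance of $\mathfrak{S}_n^r$, and then substituting $A(\Av_n(231);t)=N_n(t)$. There is nothing to add.
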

In the next subsection, we examine the polynomials $P^{(\pk,\des)}(\Av_{n}(231);y,t)$
in greater detail and interpret this corollary in terms of binary
trees and Dyck paths. Setting $y=1$ in (\ref{e-npkdes231}) gives
an identity relating the $n$th Narayana polynomial to the peak polynomial
for $231$-avoiding $n$-permutations, which is equivalent to Equation
4.7 of \cite[p. 77]{Petersen2015}.

We also state the result for 2-stack-sortable permutations, which
can again be characterized in terms of pattern avoidance: the set
of 2-stack-sortable $n$-permutations $\mathfrak{S}_{n}^{2}$ is equal
to the set $\Av_{n}(2341,3\bar{5}241)$ of $n$-permutations avoiding
the pattern 2341 and the ``barred pattern'' $3\bar{5}241$ (see \cite[Theorem 4.2.18]{West1990}).\footnote{A permutation is said to avoid the ``barred pattern'' $3\bar{5}241$
if no 3241-pattern occurs without there being a larger letter between
the letters corresponding to 3 and 2. Note that $3\bar{5}241$ is
not a signed permutation. For more general definitions on notions
of pattern avoidance in permutations, see \cite{Kitaev2011} for a
comprehensive reference.} The descent polynomial for these $n$-permutations is given by the
following formula, which was proven by Jacquard and Schaeffer \cite{Jacquard1998}
using a bijection with nonseparable planar maps: 
\[
A(\mathfrak{S}_{n}^{2};t)=\sum_{k=1}^{n}\frac{(n+k-1)!(2n-k)!}{k!(n-k+1)!(2k-1)!(2n-2k+1)!}t^{k}.
\]

\begin{cor}
\label{c-pkdes2ss} For $n\geq1$, we have \textup{
\begin{align*}
A(\mathfrak{S}_{n}^{2};t) & =\left(\frac{1+yt}{1+y}\right)^{n+1}P^{(\pk,\des)}\left(\mathfrak{S}_{n}^{2};\frac{(1+y)^{2}t}{(y+t)(1+yt)},\frac{y+t}{1+yt}\right)
\end{align*}
}Equivalently, \textup{
\begin{align*}
P^{(\pk,\des)}(\mathfrak{S}_{n}^{2};y,t) & =\left(\frac{1+u}{1+uv}\right)^{n+1}A(\mathfrak{S}_{n}^{2};v)
\end{align*}
}where $u=\frac{1+t^{2}-2yt-(1-t)\sqrt{(1+t)^{2}-4yt}}{2(1-y)t}$
and $v=\frac{(1+t)^{2}-2yt-(1+t)\sqrt{(1+t)^{2}-4yt}}{2yt}.$
\end{cor}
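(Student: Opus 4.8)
The plan is to obtain this corollary as a direct specialization of Theorem \ref{t-pkdesmfs}, exactly as Corollaries \ref{c-pkdes231} and \ref{c-pkdes2ss} are obtained from it by choosing the invariant set $\Pi$ appropriately. Here we take $\Pi = \mathfrak{S}_{n}^{2}$, the set of $2$-stack-sortable $n$-permutations. The only hypothesis of Theorem \ref{t-pkdesmfs} that requires checking is that $\mathfrak{S}_{n}^{2}$ is invariant under the MFS-action.

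For this invariance, I would simply appeal to the fact already recorded in the discussion preceding the statement: by Br\"and\'en \cite[Corollary 4.2]{Braenden2008}, the set $\mathfrak{S}_{n}^{r}$ of $r$-stack-sortable $n$-permutations is invariant under the MFS-action for every $r \geq 1$. Taking $r = 2$ gives precisely what is needed. (If one wished to be self-contained, one could instead observe that valley hopping does not change the stack-sorting tree structure in the relevant way, but invoking the cited result is cleanest and is the route the paper takes for the $r=1$ case in Corollary \ref{c-pkdes231}.)

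With invariance in hand, Theorem \ref{t-pkdesmfs} applied to $\Pi = \mathfrak{S}_{n}^{2}$ yields
\[
A(\mathfrak{S}_{n}^{2};t)=\left(\frac{1+yt}{1+y}\right)^{n+1}P^{(\pk,\des)}\left(\mathfrak{S}_{n}^{2};\frac{(1+y)^{2}t}{(y+t)(1+yt)},\frac{y+t}{1+yt}\right)
\]
verbatim, which is the first displayed identity. Here $A(\mathfrak{S}_{n}^{2};t)$ is the explicit descent polynomial of Jacquard and Schaeffer \cite{Jacquard1998} quoted just above the statement, though that explicit form plays no role in the derivation. The second (inverted) identity then follows by solving $u = \frac{(1+y)^{2}t}{(y+t)(1+yt)}$ and $v = \frac{y+t}{1+yt}$ for $y$ and $t$ and simplifying, which is the identical substitution carried out in the proof of Theorem \ref{t-pkdes}.

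I do not anticipate any real obstacle: the entire content is the observation that $\mathfrak{S}_{n}^{2}$ lies in the scope of Theorem \ref{t-pkdesmfs}. The only point worth a sentence of care is citing the correct source for MFS-invariance of $2$-stack-sortable permutations and noting that $\mathfrak{S}_{n}^{2} = \Av_{n}(2341,3\bar{5}241)$ so that the Jacquard--Schaeffer formula applies to the left-hand side.
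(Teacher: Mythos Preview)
Your proposal is correct and matches the paper's approach exactly: the corollary is stated without a separate proof precisely because it is the specialization of Theorem \ref{t-pkdesmfs} to $\Pi=\mathfrak{S}_{n}^{2}$, with MFS-invariance supplied by Br\"and\'en \cite[Corollary 4.2]{Braenden2008}, and the inverted form obtained by the same substitution as in Theorem \ref{t-pkdes}.
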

Furthermore, there are various statistics $\st$ that are constant
on any orbit of the MFS-action. These include the number of occurrences
of the generalized permutation pattern 23-1 and the number of occurrences
of 13-2 (see \cite[Theorem 5.1]{Braenden2008}) as well as the number
of ``admissible inversions'' (see \cite[Lemma 7]{Lin2015}). Thus
if we refine the polynomials $A(\Pi;t)$ and $P^{(\pk,\des)}(\Pi;y,t)$
by defining 
\[
A^{\st}(\Pi;t,w)\coloneqq\sum_{\pi\in\Pi}t^{\des(\pi)+1}w^{\st(\pi)}
\]
and 
\[
P^{(\pk,\des,\st)}(\Pi;y,t,w)\coloneqq\sum_{\pi\in\Pi}y^{\pk(\pi)+1}t^{\des(\pi)+1}w^{\st(\pi)},
\]
then we have the following result.
\begin{cor}
\label{c-pkdesst} Suppose that $\Pi\subseteq\mathfrak{S}_{n}$ for
$n\geq1$ is invariant under the MFS-action, and let $\st$ be a permutation
statistic that is constant on any orbit of the MFS-action. Then 
\[
A^{\st}(\Pi;t,w)=\left(\frac{1+yt}{1+y}\right)^{n+1}P^{(\pk,\des,\st)}\left(\Pi;\frac{(1+y)^{2}t}{(y+t)(1+yt)},\frac{y+t}{1+yt},w\right).
\]
Equivalently, 
\[
P^{(\pk,\des,\st)}(\Pi;y,t,w)=\left(\frac{1+u}{1+uv}\right)^{n+1}A^{\st}(\Pi;v,w)
\]
where $u=\frac{1+t^{2}-2yt-(1-t)\sqrt{(1+t)^{2}-4yt}}{2(1-y)t}$ and
$v=\frac{(1+t)^{2}-2yt-(1+t)\sqrt{(1+t)^{2}-4yt}}{2yt}.$ 
\end{cor}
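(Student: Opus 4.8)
The plan is to revisit the proof of Theorem~\ref{t-pkdesmfs} and observe that its central computation is carried out orbit by orbit, so that any statistic which is constant on each orbit can simply be carried along as an inert weight. Recall that in that proof we established, for every $n$-permutation $\sigma$, the orbit-wise identity
\[
\sum_{\pi\in\Orb(\sigma)}t^{\des(\pi)}=\sum_{\pi\in\Orb(\sigma)}\frac{(1+yt)^{n-\pk(\pi)-\des(\pi)-1}(y+t)^{\des(\pi)-\pk(\pi)}t^{\pk(\pi)}}{(1+y)^{n-2\pk(\pi)-1}},
\]
where $\Orb(\sigma)$ denotes the orbit of $\sigma$ under the MFS-action; this is exactly the displayed equation obtained there after dividing both sides by $(1+y)^{n-2\pk(\sigma)-1}=(1+y)^{n-2\pk(\pi)-1}$.

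Now let $\st$ be a permutation statistic that is constant on every MFS-orbit. Fix $\sigma$ and set $s=\st(\sigma)$, so that $\st(\pi)=s$ for every $\pi\in\Orb(\sigma)$. Multiplying the orbit-wise identity above by $t\,w^{s}$ and pulling the factor $w^{s}$ inside each sum—which is legitimate precisely because $\st$ is orbit-constant—yields
\[
\sum_{\pi\in\Orb(\sigma)}t^{\des(\pi)+1}w^{\st(\pi)}=\sum_{\pi\in\Orb(\sigma)}\frac{(1+yt)^{n-\pk(\pi)-\des(\pi)-1}(y+t)^{\des(\pi)-\pk(\pi)}t^{\pk(\pi)+1}}{(1+y)^{n-2\pk(\pi)-1}}\,w^{\st(\pi)}.
\]
Since $\Pi$ is invariant under the MFS-action, it is a disjoint union of orbits $\Orb(\sigma)$; summing the last identity over a set of representatives of those orbits gives
\[
A^{\st}(\Pi;t,w)=\sum_{\pi\in\Pi}\frac{(1+yt)^{n-\pk(\pi)-\des(\pi)-1}(y+t)^{\des(\pi)-\pk(\pi)}t^{\pk(\pi)+1}}{(1+y)^{n-2\pk(\pi)-1}}\,w^{\st(\pi)}.
\]
Recognizing the summand, exactly as in the proof of Theorem~\ref{t-pkdesmfs}, as $\left(\frac{1+yt}{1+y}\right)^{n+1}$ times $\left(\frac{(1+y)^{2}t}{(y+t)(1+yt)}\right)^{\pk(\pi)+1}\left(\frac{y+t}{1+yt}\right)^{\des(\pi)+1}w^{\st(\pi)}$ establishes the first displayed identity of the corollary, and the second follows by the same substitution $u=\frac{(1+y)^{2}t}{(y+t)(1+yt)}$, $v=\frac{y+t}{1+yt}$ used there (equivalently by solving for $y,t$ exactly as in Theorem~\ref{t-pkdes}).

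As for the main obstacle: there really is none beyond bookkeeping—the entire content is the observation that the argument for Theorem~\ref{t-pkdesmfs} is local to orbits, so an orbit-constant statistic rides along without interacting with the involutions $\varphi_{S}'$ or with the peak/descent weights. The only points requiring (minor) care are that MFS-invariance of $\Pi$ lets us write $\sum_{\pi\in\Pi}$ as a sum of the orbit sums $\sum_{\pi\in\Orb(\sigma)}$, which is what permits summing the orbit-wise identity, and that $\st$ being constant on each orbit is exactly the hypothesis needed to commute the weight $w^{\st}$ past the inner sum. The statistics cited in the paper as being constant on MFS-orbits then make the corollary immediately applicable.
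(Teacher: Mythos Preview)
Your proof is correct and is exactly the argument the paper has in mind: the corollary is stated without proof precisely because the orbit-by-orbit identity established in the proof of Theorem~\ref{t-pkdesmfs} can be multiplied by an orbit-constant weight $w^{\st(\sigma)}$ and then summed over the orbits comprising $\Pi$. Your write-up makes this explicit and handles the bookkeeping carefully.
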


\subsection{231-avoiding permutations, binary trees, and Dyck paths}

Corollary \ref{c-pkdes231} shows that the polynomial 
\[
P^{(\pk,\des)}(\Av_{n}(231);y,t)=\sum_{\pi\in\Av_{n}(231)}y^{\pk(\pi)+1}t^{\des(\pi)+1}
\]
can be expressed in terms of the $n$th Narayana polynomial $N_{n}(t)$.
However, it is worth noting that there is a simple formula for the
polynomials $P^{(\pk,\des)}(\Av_{n}(231);y,t)$ and their coefficients.
\begin{thm}
\label{t-pkdes231f} For $n\geq1$, we have 
\begin{equation}
P^{(\pk,\des)}(\Av_{n}(231);y,t)=\sum_{k=0}^{\left\lfloor (n-1)/2\right\rfloor }\frac{1}{k+1}{2k \choose k}{n-1 \choose 2k}y^{k+1}t^{k+1}(1+t)^{n-2k-1},\label{e-pkdes231f1}
\end{equation}
and the number of $231$-avoiding $n$-permutations with exactly $k$
peaks and $j$ descents is 
\begin{equation}
\frac{1}{k+1}{2k \choose k}{n-1 \choose 2k}{n-2k-1 \choose j-k}.\label{e-pkdes231f2}
\end{equation}
\end{thm}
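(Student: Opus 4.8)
The plan is to exploit the recursive block structure of $231$-avoiding permutations: if $\pi\in\Av_n(231)$ and the letter $n$ occupies position $i$, then every letter before $n$ is smaller than every letter after $n$ (otherwise $n$ would serve as the ``3'' of a $231$ pattern), so $\pi=\sigma\,n\,\tau$ where, after order-preserving relabelling, $\sigma\in\Av_{i-1}(231)$ and $\tau\in\Av_{n-i}(231)$. I would translate this decomposition into a functional equation for the ordinary generating function $G(x)=G(x,y,t)=\sum_{n\ge1}\bigl(\sum_{\pi\in\Av_n(231)}y^{\pk(\pi)}t^{\des(\pi)}\bigr)x^{n}$ of the \emph{unshifted} joint distribution, solve it in closed form using the Catalan generating function, and then extract coefficients and multiply by $yt$ to account for the $+1$ shifts in the definition of $P^{(\pk,\des)}$.

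First I would record how $\pk$ and $\des$ transform under $\pi=\sigma\,n\,\tau$. Since $n$ is the largest letter, the slot immediately before $n$ is always an ascent while the slot immediately after $n$ is a descent exactly when $\tau\neq\varnothing$, so $\des(\pi)=\des(\sigma)+\des(\tau)+[\tau\neq\varnothing]$. Likewise the only letter whose peak status is affected is $n$ itself, which is a peak of $\pi$ precisely when both $\sigma$ and $\tau$ are nonempty (and no peak of $\sigma$ or of $\tau$ is created or destroyed, as the boundary letters of $\sigma$ and $\tau$ are adjacent to $n$ and hence not peaks of $\pi$), so $\pk(\pi)=\pk(\sigma)+\pk(\tau)+[\sigma\neq\varnothing\text{ and }\tau\neq\varnothing]$. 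Splitting into the four cases according to whether $\sigma$ and $\tau$ are empty yields
\[
G=x\bigl(1+G+tG+ytG^{2}\bigr)=x+(1+t)xG+ytxG^{2},
\]
which has a unique power-series solution (coefficients determined order by order, $[x^{1}]G=1$).

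To solve this I would set $a=a(x)=x/(1-(1+t)x)$, so that $a=x+(1+t)xa$, and look for a solution of the form $G=a\,C$ with $C=C(w)$, $w=yta^{2}$, where $C(u)=\sum_{k\ge0}C_{k}u^{k}=\bigl(1-\sqrt{1-4u}\bigr)/(2u)$ is the Catalan generating function satisfying $C(u)=1+uC(u)^{2}$. Then $wC^{2}=yta^{2}C^{2}=ytG^{2}$, hence $C=1+ytG^{2}$ and $G=a+ytaG^{2}$; substituting $a=x+(1+t)xa$ and using $a-G=-a\,ytG^{2}$ together with $a-x=(1+t)xa$ confirms the functional equation, so by uniqueness
\[
G(x,y,t)=\frac{x}{1-(1+t)x}\;C\!\left(\frac{ytx^{2}}{(1-(1+t)x)^{2}}\right).
\]
Expanding $C$ as $\sum_{k\ge0}C_{k}(yt)^{k}a^{2k+1}$ and using $a^{2k+1}=\sum_{m\ge0}\binom{m+2k}{2k}(1+t)^{m}x^{m+2k+1}$ gives $[x^{n}]G=\sum_{k\ge0}C_{k}(yt)^{k}\binom{n-1}{2k}(1+t)^{n-2k-1}$; multiplying by $yt$ and writing $C_{k}=\tfrac1{k+1}\binom{2k}{k}$ yields exactly \eqref{e-pkdes231f1}, the sum terminating at $k=\lfloor(n-1)/2\rfloor$ because $\binom{n-1}{2k}$ vanishes beyond that. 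Finally \eqref{e-pkdes231f2} follows by reading off the coefficient of $y^{k+1}t^{j+1}$ in \eqref{e-pkdes231f1}: only the index-$k$ summand contributes the power $y^{k+1}$, and in it $[t^{j+1}]\,t^{k+1}(1+t)^{n-2k-1}=\binom{n-2k-1}{j-k}$.

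The step that genuinely needs care is the peak bookkeeping under the block decomposition — verifying precisely that the last letter of $\sigma$ and the first letter of $\tau$ never become peaks of $\pi$ and that no interior peak of $\sigma$ or $\tau$ is lost — since an off-by-one error there would corrupt the functional equation. Everything after that is routine generating-function algebra. (One could alternatively deduce \eqref{e-pkdes231f1} from Corollary \ref{c-pkdes231} by inverting the substitution, or via the binary-tree/Dyck-path correspondence developed in the next subsection, but the direct recursive argument above is the cleanest.)
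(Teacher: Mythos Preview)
Your proof is correct and follows essentially the same strategy as the paper: both derive the same quadratic functional equation for the ordinary generating function (yours is $G=x(1+(1+t)G+ytG^{2})$, the paper's is $G=x(yG^{2}+(1+t)G+t)$, differing only by the normalization $G_{\text{paper}}=tG_{\text{yours}}$), and both then extract coefficients. The paper obtains the functional equation by forward-referencing the binary tree bijection and invokes Lagrange inversion for the extraction, whereas you derive the equation directly from the $\pi=\sigma\,n\,\tau$ block decomposition of $231$-avoiders and solve explicitly via the Catalan substitution $G=aC(yta^{2})$; these are equivalent routes to the same end, with your derivation of the recursion arguably more self-contained.
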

By setting $t=1$ in (\ref{e-pkdes231f1}), we see that 
\[
\frac{1}{k+1}{2k \choose k}{n-1 \choose 2k}2^{n-2k-1}
\]
is the number of $231$-avoiding $n$-permutations with exactly $k$
peaks; this is Corollary 4.5 of Br\"and\'en \cite{Braenden2008}.
\begin{proof}
Define $G=G(y,t,x)$ to be the ordinary generating function for the
polynomials $P^{(\pk,\des)}(\Av_{n}(231);y,t)/y$ for $n\geq1$. Then
$G$ satisfies the functional equation 
\[
G=x(yG^{2}+tG+G+t);
\]
this can be seen using the bijection with binary trees given later
in this subsection. Then a straightforward application of Lagrange
inversion yields (\ref{e-pkdes231f1}), and (\ref{e-pkdes231f2})
is easily obtained from (\ref{e-pkdes231f1}) using the binomial theorem.
\end{proof}
We recall the definitions of two families of objects closely related
to $231$-avoiding permutations. A \textit{binary tree}\footnote{These are sometimes also called \textit{binary plane trees} or \textit{planar
binary trees} in the literature.} is a rooted tree $T$ satisfying the following two conditions:
\begin{enumerate}
\item Each node (i.e., vertex) of $T$ has 0, 1, or 2 children.
\item Each child of each node is distinguished as a \textit{left child}
or a \textit{right child}.
\end{enumerate}
Let ${\cal T}_{n}$ be the set of binary trees with $n$ nodes. For
example, below is a binary tree with 6 nodes.

\begin{center}
\begin{tikzpicture}[scale=0.47,auto,   thick,main node/.style={circle,fill=blue!20,draw,font=\sffamily,minimum size=1.65em},leaf/.style={circle,fill=red!20,draw,font=\sffamily,minimum size=0.5em}]

\node[main node] (1) at (10,10) {};
\node[main node] (2) at (6,7) {};
\node[main node] (3) at (14,7) {};
\node[main node] (4) at (4,4) {};
\node[main node] (5) at (8,4) {};
\node[main node] (6) at (16,4) {};

\foreach \from/\to in {1/2,1/3,2/4,2/5,3/6,3/6}
\draw (\from) -- (\to);

\end{tikzpicture}
\end{center}

We will work with two particular statistics defined on binary trees.
If $T$ is a binary tree, then let $\nlc(T)$ be the number of nodes
of $T$ with no left children, and let $\tc(T)$ be the number of
nodes of $T$ with two children. For example, if $T$ is the binary
tree given above, then $\nlc(T)=4$ and $\tc(T)=2$.

A \textit{Dyck path} of \textit{semilength} $n$ is a lattice path
in the plane from $(0,0)$ to $(2n,0)$, consisting of up steps $(1,1)$
and down steps $(-1,1)$, and never going below the $x$-axis. Let
${\cal D}_{n}$ be the set of Dyck paths of semilength $n$. A Dyck
path can be encoded by a word on the alphabet $\{U,D\}$ in which
$U$ corresponds to an up step and $D$ a down step; this is called
the \textit{Dyck word} corresponding to the Dyck path. For example,
below is a Dyck path with semilength 6 corresponding to the Dyck word
$UUDUUUDDDDUD$.

\noindent \begin{center}
\begin{tikzpicture}[scale=0.6] 
\draw [line width=0] (4,2); 
\draw[pathcolorlight] (0,0) -- (12,0); 
\drawlinedots{0,1,2,3,4,5,6,7,8,9,10,11,12}{0,1,2,1,2,3,4,3,2,1,0,1,0} 
\end{tikzpicture} \qquad{}
\par\end{center}

We define two statistics on Dyck paths: the number of ``peaks'' and
the number of ``hooks''. Define a \textit{peak} of a Dyck path $\mu$
to be an occurrence of the subword $UD$ in the corresponding Dyck
word of $\mu$, and a \textit{hook} of $\mu$ to be an occurrence
of the subword $DDU$. Let $\pk(\mu)$ be the number of peaks of $\mu$
and $\hk(\mu)$ the number of hooks of $\mu$. For example, if $\mu$
is the Dyck path given above, then $\pk(\mu)=3$ and $\hk(\mu)=1$.

It is well known that binary trees with $n$ nodes, Dyck paths with
semilength $n$, and 231-avoiding $n$-permutations are all counted
by the $n$th Catalan number $C_{n}$ given by 
\[
C_{n}\coloneqq\frac{1}{n+1}{2n \choose n}.
\]
A number of bijections exist between these three sets of objects.
Below we will use two particular bijections which behave nicely with
respect to the statistics defined above. The first bijection $\Theta:\Av_{n}(231)\rightarrow{\cal T}_{n}$
satisfies $\des(\pi)+1=\nlc(\Theta(\pi))$ and $\pk(\pi)=\tc(\Theta(\pi))$.
The second bijection $\Psi:\Av_{n}(231)\rightarrow{\cal D}_{n}$ satisfies
$\des(\pi)+1=\pk(\Psi(\pi))$ and $\pk(\pi)=\hk(\Psi(\pi))$. These
bijections will allow us to give interpretations of Corollary \ref{c-pkdes231}
and Theorem \ref{t-pkdes231f} for binary trees and Dyck paths.

A \textit{decreasing binary tree} $T$ with $n$ nodes is a binary
tree with nodes labeled using distinct letters from $[n]$ such that
if $i,j$ are nodes of $T$ with $j$ a child of $i$, then $i>j$.\footnote{We are identifying each node with its label.}
Let us briefly recall the classical recursive bijection $\tilde{\Theta}$
between permutations\footnote{Here we are using the term ``permutation'' in a broader sense, referring
to a linear ordering of any set of distinct integers.} and decreasing binary trees. If $\pi$ is the empty permutation,
then let $\tilde{\Theta}(\pi)=\emptyset$. Otherwise, let $n$ be
the largest letter of $\pi$, and factor $\pi$ as $\pi=\sigma n\tau$
where $\sigma$ is the (possibly empty) subsequence of $\pi$ consisting
of all letters to the left of $n$ and $\tau$ is the (possibly empty)
subsequence consisting of all letters to the right of $n$. Then $\tilde{\Theta}(\pi)$
is defined by letting $n$ be the root of $\tilde{\Theta}(\pi)$,
attaching $\tilde{\Theta}(\sigma)$ as a left subtree, and attaching
$\tilde{\Theta}(\tau)$ as a right subtree. It is clear that $\tilde{\Theta}$
gives a bijection between $\mathfrak{S}_{n}$ and ${\cal T}_{n}$
for all $n$.
\begin{lem}
\label{l-pbt} Let $\pi\in\mathfrak{S}_{n}$ with $n\geq1$. Then
$\des(\pi)+1=\nlc(\tilde{\Theta}(\pi))$ and $\pk(\pi)=\tc(\tilde{\Theta}(\pi))$.
\end{lem}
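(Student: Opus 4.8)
The plan is to prove both identities simultaneously by determining, for each node of the decreasing binary tree $\tilde{\Theta}(\pi)$, exactly when it has a left child and when it has a right child, and then reading off $\nlc$ and $\tc$ from the descent and peak structure of $\pi$. The key tool is the fact that the \emph{in-order traversal} of $\tilde{\Theta}(\pi)$---visit the left subtree, then the root, then the right subtree, recursively---recovers the word $\pi$. This follows by a straightforward induction on the recursive definition $\pi=\sigma\,n\,\tau\mapsto$ (root $n$, left subtree $\tilde{\Theta}(\sigma)$, right subtree $\tilde{\Theta}(\tau)$), since the in-order traversal of the resulting tree is the concatenation of the in-order traversal of $\tilde{\Theta}(\sigma)$, the letter $n$, and the in-order traversal of $\tilde{\Theta}(\tau)$. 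Consequently, if $a=\pi_i$ is a node of $\tilde{\Theta}(\pi)$, then the node visited immediately before $a$ in the in-order traversal is $\pi_{i-1}$ when $i\geq 2$ (and there is no such node when $i=1$), and the node visited immediately after $a$ is $\pi_{i+1}$ when $i\leq n-1$ (and there is none when $i=n$).

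Next I would characterize the presence of children. If $a=\pi_i$ has a left child, then its in-order predecessor is the rightmost node of its left subtree, whose label is smaller than $a$ because $\tilde{\Theta}(\pi)$ is decreasing; hence $\pi_{i-1}<\pi_i$. Conversely, if $a$ has no left child, then either $a$ is the first node of the traversal, so $i=1$, or the in-order predecessor of $a$ is the lowest ancestor $b$ of $a$ such that $a$ lies in the right subtree of $b$; since $a$ then lies in the subtree rooted at $b$, we get $b>a$, i.e.\ $\pi_{i-1}>\pi_i$. Thus $a=\pi_i$ has a left child if and only if $i\geq 2$ and $\pi_{i-1}<\pi_i$. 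A symmetric argument applied to the in-order successor shows that $a=\pi_i$ has a right child if and only if $i\leq n-1$ and $\pi_{i+1}<\pi_i$.

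Finally I would count. A node $\pi_i$ has no left child exactly when $i=1$ or $i-1\in\Des(\pi)$, and these two conditions are satisfied by disjoint sets of indices (the first by $\{1\}$, the second by a subset of $\{2,\dots,n\}$), so $\nlc(\tilde{\Theta}(\pi))=1+|\Des(\pi)|=\des(\pi)+1$. A node $\pi_i$ has two children exactly when it has both a left and a right child, i.e.\ when $2\leq i\leq n-1$ and $\pi_{i-1}<\pi_i>\pi_{i+1}$, which is precisely the condition for $i$ to be a peak of $\pi$; hence $\tc(\tilde{\Theta}(\pi))=\pk(\pi)$.

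I do not expect a real obstacle here. The only point demanding care is the analysis of the in-order predecessor (resp.\ successor) of a node that has no left (resp.\ right) child---in particular, cleanly separating the boundary indices $i=1$ and $i=n$ from the case where the neighbor is a strict ancestor---but this is exactly the standard behavior of in-order traversal of a binary tree, so it should be routine. As an alternative, one can prove both identities by induction on $n$ directly from the recursive definition of $\tilde{\Theta}$, tracking the contribution of the root $n$ together with the positions of $\pi$ adjacent to $n$; this also works but involves slightly fussier bookkeeping in the cases where $\sigma$ or $\tau$ is empty.
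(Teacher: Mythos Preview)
Your argument is correct, but it takes a different route from the paper's own proof. The paper proceeds by induction on $n$ using the recursive decomposition $\pi=\sigma n\tau$: it writes down case formulas for $\des(\pi)+1$ and for $\nlc(\tilde{\Theta}(\pi))$ (four cases according to whether $\sigma$ and $\tau$ are empty), checks they match, and does the same for $\pk$ versus $\tc$. You instead exploit the fact that in-order traversal of $\tilde{\Theta}(\pi)$ returns $\pi$, then determine for each node $\pi_i$ whether it has a left or right child by comparing $\pi_i$ with its in-order neighbors $\pi_{i-1}$ and $\pi_{i+1}$, and read off the counts directly. Your approach is more informative---it actually exhibits a bijection between $\{1\}\cup\{\,i:\ i-1\in\Des(\pi)\,\}$ and the nodes with no left child, and between peaks and two-child nodes---while the paper's inductive verification is shorter to write and requires no auxiliary facts about tree traversal. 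Amusingly, the alternative you sketch at the end (induction with bookkeeping on empty $\sigma$ or $\tau$) is exactly what the paper does.
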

\begin{proof}
Fix $\pi\in\mathfrak{S}_{n}$ with $n\geq1$, and write $\pi=\sigma n\tau$.
It is easy to verify that 
\[
\des(\pi)+1=\begin{cases}
\des(\sigma)+1+\des(\tau)+1, & \mbox{if }\left|\sigma\right|\geq1\,\mbox{ and }\left|\tau\right|\geq1,\\
\des(\sigma)+1, & \mbox{if }\left|\sigma\right|\geq1\,\mbox{ and }\left|\tau\right|=0,\\
\des(\tau)+1+1, & \mbox{if }\left|\sigma\right|=0\,\mbox{ and }\left|\tau\right|\geq1,\\
1, & \mbox{if }\left|\sigma\right|=\left|\tau\right|=0,
\end{cases}
\]
and 
\[
\nlc(\tilde{\Theta}(\pi))=\begin{cases}
\nlc(\tilde{\Theta}(\sigma))+\nlc(\tilde{\Theta}(\tau)), & \mbox{if }\left|\sigma\right|\geq1\,\mbox{ and }\left|\tau\right|\geq1,\\
\nlc(\tilde{\Theta}(\sigma)), & \mbox{if }\left|\sigma\right|\geq1\,\mbox{ and }\left|\tau\right|=0,\\
\nlc(\tilde{\Theta}(\tau))+1, & \mbox{if }\left|\sigma\right|=0\,\mbox{ and }\left|\tau\right|\geq1,\\
1, & \mbox{if }\left|\sigma\right|=\left|\tau\right|=0,
\end{cases}
\]
so it follows from induction that $\des(\pi)+1=\nlc(\tilde{\Theta}(\pi))$
for all nonempty permutations $\pi$. Similarly, 
\[
\pk(\pi)=\begin{cases}
\pk(\sigma)+\pk(\tau)+1, & \mbox{if }\left|\sigma\right|\geq1\,\mbox{ and }\left|\tau\right|\geq1,\\
\pk(\sigma)+\pk(\tau), & \mbox{otherwise},
\end{cases}
\]
and 
\[
\tc(\tilde{\Theta}(\pi))=\begin{cases}
\tc(\tilde{\Theta}(\sigma))+\tc(\tilde{\Theta}(\tau))+1, & \mbox{if }\left|\sigma\right|\geq1\,\mbox{ and }\left|\tau\right|\geq1,\\
\tc(\tilde{\Theta}(\sigma))+\tc(\tilde{\Theta}(\tau)), & \mbox{otherwise},
\end{cases}
\]
so we have $\pk(\pi)=\tc(\tilde{\Theta}(\pi))$ as well.
\end{proof}
For example, the permutation $\pi=132495876$ is mapped by $\tilde{\Theta}$
to the decreasing binary tree below.

\begin{center}
\begin{tikzpicture}[scale=0.47,auto,   thick,main node/.style={circle,fill=blue!20,draw,font=\sffamily,minimum size=1em},leaf/.style={circle,fill=red!20,draw,font=\sffamily,minimum size=0.5em}]

\node[main node] (9) at (10,10) {9};
\node[main node] (4) at (6,7) {4};
\node[main node] (8) at (14,7) {8};
\node[main node] (3) at (4,4) {3};
\node[main node] (5) at (12,4) {5};
\node[main node] (7) at (16,4) {7};
\node[main node] (1) at (3,1) {1};
\node[main node] (2) at (5,1) {2};
\node[main node] (6) at (17,1) {6};

\foreach \from/\to in {9/4,9/8,4/3,8/5,8/7,3/1,3/2,7/6}
\draw (\from) -- (\to);

\end{tikzpicture}
\end{center}Observe that $\des(\pi)+1=\nlc(\tilde{\Theta}(\pi))=5$ and $\pk(\pi)=\tc(\tilde{\Theta}(\pi))=3$.
To define the map $\Theta:\Av_{n}(231)\rightarrow{\cal T}_{n}$, first
notice that the permutation $\pi=132495876$ is 231-avoiding, and
that performing a post-order traversal of the corresponding tree $\tilde{\Theta}(\pi)$
yields the increasing permutation 123456789. Indeed, it is not hard
to verify that a permutation $\pi\in\mathfrak{S}_{n}$ is 231-avoiding
if and only if a post-order traversal of $\tilde{\Theta}(\pi)$ yields
$12\cdots n$. Thus, if we define $\Theta(\pi)$ for $\pi\in\Av_{n}(231)$
to be the binary tree obtained by taking $\tilde{\Theta}(\pi)$ and
removing the labels, then we can recover $\tilde{\Theta}(\pi)$ from
$\Theta(\pi)$ by labeling the nodes via post-order traversal; it
follows that we have a well-defined bijection $\Theta:\Av_{n}(231)\rightarrow{\cal T}_{n}$
satisfying the properties from Lemma \ref{l-pbt}. Therefore, the
$n$th Narayana polynomial $N_{n}(t)$ gives the distribution of binary
trees with $n$ nodes by the number of nodes without left children,
and by defining 
\[
T_{n}^{(\tc,\nlc)}(y,t)\coloneqq\sum_{T\in{\cal T}_{n}}y^{\tc(T)+1}t^{\nlc(T)},
\]
we have proven the following interpretation of Corollary \ref{c-pkdes231}
and Theorem \ref{t-pkdes231f}.
\begin{cor}
\label{c-tcnlc} For any $n\geq1$, we have \textup{
\[
N_{n}(t)=\left(\frac{1+yt}{1+y}\right)^{n+1}T_{n}^{(\tc,\nlc)}\left(\frac{(1+y)^{2}t}{(y+t)(1+yt)},\frac{y+t}{1+yt}\right).
\]
}Equivalently, \textup{
\begin{align*}
T_{n}^{(\tc,\nlc)}(y,t) & =\left(\frac{1+u}{1+uv}\right)^{n+1}N_{n}(v)
\end{align*}
}where $u=\frac{1+t^{2}-2yt-(1-t)\sqrt{(1+t)^{2}-4yt}}{2(1-y)t}$
and $v=\frac{(1+t)^{2}-2yt-(1+t)\sqrt{(1+t)^{2}-4yt}}{2yt}.$ Furthermore,
\[
T_{n}^{(\tc,\nlc)}(y,t)=\sum_{k=0}^{\left\lfloor (n-1)/2\right\rfloor }\frac{1}{k+1}{2k \choose k}{n-1 \choose 2k}y^{k+1}t^{k+1}(1+t)^{n-2k-1}
\]
and the number of binary trees with $n$ nodes, $k$ nodes with two
children, and $j$ nodes with no left children is 
\[
\frac{1}{k+1}{2k \choose k}{n-1 \choose 2k}{n-2k-1 \choose j-k-1}.
\]
\end{cor}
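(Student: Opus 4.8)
The plan is to transport the results on $231$-avoiding permutations established in Corollary~\ref{c-pkdes231} and Theorem~\ref{t-pkdes231f} across the bijection $\Theta\colon\Av_n(231)\to\mathcal{T}_n$ constructed above. Recall that $\Theta$ satisfies $\des(\pi)+1=\nlc(\Theta(\pi))$ and $\pk(\pi)=\tc(\Theta(\pi))$, which follows from Lemma~\ref{l-pbt} together with the observation that a post-order traversal of $\tilde{\Theta}(\pi)$ returns $12\cdots n$ exactly when $\pi$ avoids $231$. Thus $\Theta$ carries the pair of statistics $(\tc,\nlc)$ on $\mathcal{T}_n$ to the pair $(\pk,\des+1)$ on $\Av_n(231)$.

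The first step is to rewrite $T_n^{(\tc,\nlc)}(y,t)$ as a sum over permutations. Since $\Theta$ is a bijection with the stated properties,
\[
T_n^{(\tc,\nlc)}(y,t)=\sum_{T\in\mathcal{T}_n}y^{\tc(T)+1}t^{\nlc(T)}=\sum_{\pi\in\Av_n(231)}y^{\pk(\pi)+1}t^{\des(\pi)+1}=P^{(\pk,\des)}(\Av_n(231);y,t).
\]
Substituting this equality into Corollary~\ref{c-pkdes231} gives the asserted relation between $N_n(t)$ and $T_n^{(\tc,\nlc)}$ together with its inverse, and substituting it into equation~(\ref{e-pkdes231f1}) of Theorem~\ref{t-pkdes231f} gives the explicit product formula for $T_n^{(\tc,\nlc)}(y,t)$.

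For the final enumeration, I would unwind the coefficient statement. A binary tree in $\mathcal{T}_n$ with exactly $k$ nodes having two children and exactly $j$ nodes having no left child is the image under $\Theta$ of a $231$-avoiding $n$-permutation $\pi$ with $\pk(\pi)=k$ and $\des(\pi)=j-1$, so the number of such trees equals the number of such permutations. By equation~(\ref{e-pkdes231f2}) this count is $\frac{1}{k+1}\binom{2k}{k}\binom{n-1}{2k}\binom{n-2k-1}{(j-1)-k}$, and rewriting the lower entry of the last binomial coefficient as $j-k-1$ produces the stated formula.

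There is no substantive obstacle, since the real work was done in constructing $\Theta$ and establishing Lemma~\ref{l-pbt} and Theorem~\ref{t-pkdes231f}; what remains is purely a change of variables. The one point that requires care is the index shift: because $\nlc$ tracks $\des+1$ rather than $\des$, the passage from the permutation count in~(\ref{e-pkdes231f2}) to the tree count forces the replacement $j\mapsto j-1$, which is precisely what changes the lower index $j-k$ appearing in Theorem~\ref{t-pkdes231f} to $j-k-1$ in the corollary.
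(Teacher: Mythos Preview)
Your argument is correct and is exactly the approach taken in the paper: use the bijection $\Theta$ together with Lemma~\ref{l-pbt} to identify $T_n^{(\tc,\nlc)}(y,t)$ with $P^{(\pk,\des)}(\Av_n(231);y,t)$, then read off the identities from Corollary~\ref{c-pkdes231} and Theorem~\ref{t-pkdes231f}. Your handling of the index shift $j\mapsto j-1$ in the coefficient formula is also correct and matches the paper.
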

Now we describe the bijection $\Psi:\Av_{n}(231)\rightarrow{\cal D}_{n}$,
which was defined by Stump in \cite{Stump2009}. Let $\pi\in\Av_{n}(231)$
with $\Comp(\pi)=(L_{1},L_{2},\dots,L_{k})\vDash n$ and $\Comp(\pi^{-1})=(K_{1},K_{2},\dots,K_{k})\vDash n$.\footnote{In defining this map, Stump proves that $\des(\pi)=\des(\pi^{-1})$
if $\pi$ avoids $\sigma$ for all $\sigma\in\{132,231,312,213\}$.
Thus, if $\pi\in\Av_{n}(231)$, then the descent compositions of $\pi$
and $\pi^{-1}$ have the same number of parts.} Then $\Psi(\pi)$ is defined to be the Dyck path with corresponding
Dyck word $U^{K_{1}}D^{L_{1}}U^{K_{2}}D^{L_{2}}\cdots U^{K_{k}}D^{L_{k}}$,
in which $U^{m}$ represents 
\[
\underbrace{UU\cdots U}_{m\:\mathrm{times}}
\]
and similarly for $D^{m}$. It is not obvious that this word actually
corresponds to a Dyck path, or that $\Psi$ is a bijection even if
it is well-defined; we refer the reader to Stump's paper \cite{Stump2009}
for proofs.

For example, take the permutation $\pi=219438567$, which has inverse
$\pi^{-1}=215478963$. Then $\Comp(\pi)=(1,2,1,2,3)$ and $\Comp(\pi^{-1})=(1,2,4,1,1)$.
Then $\Psi(\pi)$ is the Dyck path corresponding to the Dyck word
$UDUUDDUUUUDUDDUDDD$, pictured below.

\noindent \begin{center}
\begin{tikzpicture}[scale=0.6] 
\draw [line width=0] (4,2); 
\draw[pathcolorlight] (0,0) -- (18,0); 
\drawlinedots{0,1,2,3,4,5,6,7,8,9,10,11,12,13,14,15,16,17,18}{0,1,0,1,2,1,0,1,2,3,4,3,4,3,2,3,2,1,0}
\end{tikzpicture} \qquad{}
\par\end{center}
\begin{lem}
\label{l-dyck} Let $\pi\in\mathfrak{S}_{n}$ with $n\geq1$. Then
$\des(\pi)+1=\pk(\Psi(\pi))$ and $\pk(\pi)=\hk(\Psi(\pi))$.
\end{lem}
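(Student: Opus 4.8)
The plan is to work directly with the Dyck word of $\Psi(\pi)$ and read off both statistics by a short combinatorial bookkeeping argument, comparing against the description of $\des$ and $\pk$ in terms of the descent composition. (Note that $\Psi$ is only defined on $\Av_n(231)$, so the statement should be read with $\pi\in\Av_n(231)$.) First I would record the shape of the Dyck word. Writing $\Comp(\pi)=(L_1,\dots,L_k)$ and $\Comp(\pi^{-1})=(K_1,\dots,K_k)$ --- these have the same number of parts $k$ by the fact of Stump cited in the footnote, since $\pi\in\Av_n(231)$ --- the Dyck word of $\Psi(\pi)$ is $w=U^{K_1}D^{L_1}U^{K_2}D^{L_2}\cdots U^{K_k}D^{L_k}$, and every $L_i$ and $K_i$ is at least $1$. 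I would also recall that $k$ is exactly the number of increasing runs of $\pi$, so $k=\des(\pi)+1$.

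Next, the peak count. A peak of $\Psi(\pi)$ is an occurrence of the factor $UD$ in $w$. Since each $K_i\geq 1$ and $L_i\geq 1$, each block $U^{K_i}D^{L_i}$ contributes exactly one such factor, namely at the junction between its $U$'s and its $D$'s, whereas each inter-block junction $D^{L_i}U^{K_{i+1}}$ is a $DU$ and contributes none. Hence $\pk(\Psi(\pi))=k=\des(\pi)+1$, which is the first identity. Then I would handle the hook count. A hook is an occurrence of $DDU$ in $w$; its $U$ must be the first letter of some $U$-block $U^{K_{i+1}}$ with $1\leq i\leq k-1$, and its two preceding letters are then the last two letters of $D^{L_i}$, which exist precisely when $L_i\geq 2$. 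Conversely each $i\in[k-1]$ with $L_i\geq 2$ produces exactly one such factor, so $\hk(\Psi(\pi))=\#\{\,i\in[k-1]:L_i\geq 2\,\}$.

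Finally I would match this against $\pk(\pi)$. For $j\in[k-1]$, the index $L_1+\cdots+L_j$ is a descent of $\pi$, and it is a peak if and only if the index immediately before it is an ascent; since the parts of $\Comp(\pi)$ record the lengths of the increasing runs, this holds if and only if the $j$th increasing run has length at least $2$, i.e.\ $L_j\geq 2$. (The boundary case $j=1$ is fine: if $L_1\geq 2$ then $L_1\geq 2$ and $L_1-1$ is an ascent.) Therefore $\pk(\pi)=\#\{\,j\in[k-1]:L_j\geq 2\,\}=\hk(\Psi(\pi))$, giving the second identity. The argument contains no real obstacle --- it is entirely bookkeeping; the one external input is that $\Comp(\pi)$ and $\Comp(\pi^{-1})$ have equally many parts, which is what makes $w$ a bona fide Dyck word with matched $U$- and $D$-blocks, and the only place needing a little care is the treatment of the first and last blocks when counting occurrences of $UD$ and $DDU$.
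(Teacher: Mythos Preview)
Your proof is correct and follows essentially the same bookkeeping argument as the paper: both identify $\pk(\Psi(\pi))$ with the number $k$ of parts of $\Comp(\pi)$, and both show $\hk(\Psi(\pi))=\pk(\pi)$ by observing that each equals the number of non-final parts $L_i$ of $\Comp(\pi)$ with $L_i\geq 2$. Your observation that the hypothesis should read $\pi\in\Av_n(231)$ rather than $\pi\in\mathfrak{S}_n$ is also well taken, since $\Psi$ is only defined there.
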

\begin{proof}
It is obvious from the definition of $\Psi$ that $\des(\pi)+1$---the
number of parts of $\Comp(\pi)$---is equal to the number of peaks
of $\Psi(\pi)$. 

To show that $\pk(\pi)=\hk(\Psi(\pi))$, let $\pi\in\Av_{n}(231)$
with descent composition $L=(L_{1},L_{2},\dots,L_{k})$. Since a peak
of $\pi$ corresponds to an occurrence of an ascent followed by a
descent, it follows that $\pk(\pi)$ is equal to the number of non-final
parts of $L$ of length at least 2 (that is, not counting $L_{k}$
if $L_{k}\geq2$). Each part $L_{i}$ of $L$ contributes to $\Psi(\pi)$
a sequence of $L_{i}$ down steps, so if $L_{i}\geq2$, then we have
a sequence of at least 2 down steps. This sequence of down steps is
followed by an up step if $i\neq k$ (i.e., if $L_{i}$ is not the
final part of $L$), so each non-final part of $L$ of length at least
2 gives rise to a hook. Moreover, it is easy to see that every hook
of $\Psi(\pi)$ arises in this way, so $\pk(\pi)=\hk(\Psi(\pi))$.
\end{proof}
For example, taking $\pi=219438567$ as above, we have $\des(\pi)+1=\pk(\Psi(\pi))=5$
and $\pk(\pi)=\hk(\Psi(\pi))=2$. It follows from Lemma \ref{l-dyck}
that the Narayana polynomials $N_{n}(t)$ also give the distribution
of Dyck paths with semilength $n$ by number of peaks, and if we define
\[
D_{n}^{(\hk,\pk)}(y,t)\coloneqq\sum_{\mu\in{\cal D}_{n}}y^{\hk(\mu)+1}t^{\pk(\mu)},
\]
we obtain yet another interpretation of Corollary \ref{c-pkdes231}
and Theorem \ref{t-pkdes231f}.
\begin{cor}
\label{c-hkpk} For any $n\geq1$, we have \textup{
\[
N_{n}(t)=\left(\frac{1+yt}{1+y}\right)^{n+1}D_{n}^{(\hk,\pk)}\left(\frac{(1+y)^{2}t}{(y+t)(1+yt)},\frac{y+t}{1+yt}\right).
\]
}Equivalently, \textup{
\begin{align*}
D_{n}^{(\hk,\pk)}(y,t) & =\left(\frac{1+u}{1+uv}\right)^{n+1}N_{n}(v)
\end{align*}
}where $u=\frac{1+t^{2}-2yt-(1-t)\sqrt{(1+t)^{2}-4yt}}{2(1-y)t}$
and $v=\frac{(1+t)^{2}-2yt-(1+t)\sqrt{(1+t)^{2}-4yt}}{2yt}.$ Furthermore,
\[
D_{n}^{(\hk,\pk)}(y,t)=\sum_{k=0}^{\left\lfloor (n-1)/2\right\rfloor }\frac{1}{k+1}{2k \choose k}{n-1 \choose 2k}y^{k+1}t^{k+1}(1+t)^{n-2k-1}
\]
and the number of Dyck paths of semilength $n$ with $k$ hooks and
$j$ peaks is 
\[
\frac{1}{k+1}{2k \choose k}{n-1 \choose 2k}{n-2k-1 \choose j-k-1}.
\]
\end{cor}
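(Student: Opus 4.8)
The plan is to reduce the statement to the results already established for $231$-avoiding permutations, using Stump's bijection $\Psi$. First I would invoke Lemma~\ref{l-dyck}: since $\Psi\colon\Av_n(231)\to{\cal D}_n$ is a bijection satisfying $\des(\pi)+1=\pk(\Psi(\pi))$ and $\pk(\pi)=\hk(\Psi(\pi))$, transporting the defining sum of $D_n^{(\hk,\pk)}(y,t)$ along $\Psi^{-1}$ sends each monomial $y^{\hk(\mu)+1}t^{\pk(\mu)}$ to $y^{\pk(\pi)+1}t^{\des(\pi)+1}$. Hence
\[
D_n^{(\hk,\pk)}(y,t)=\sum_{\pi\in\Av_n(231)}y^{\pk(\pi)+1}t^{\des(\pi)+1}=P^{(\pk,\des)}(\Av_n(231);y,t).
\]

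Granting this identity, the first two displayed formulas follow immediately by combining it with Corollary~\ref{c-pkdes231}, keeping the same $u$ and $v$. Likewise, the closed form for $D_n^{(\hk,\pk)}(y,t)$ is exactly the expression for $P^{(\pk,\des)}(\Av_n(231);y,t)$ recorded in~(\ref{e-pkdes231f1}) of Theorem~\ref{t-pkdes231f}.

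For the final assertion I would pass through $\Psi$ once more: a Dyck path of semilength $n$ with $k$ hooks and $j$ peaks corresponds under $\Psi^{-1}$ to a permutation $\pi\in\Av_n(231)$ with $\pk(\pi)=k$ and $\des(\pi)=j-1$. Plugging these values of the peak and descent parameters into the count~(\ref{e-pkdes231f2}) of Theorem~\ref{t-pkdes231f} yields
\[
\frac{1}{k+1}{2k \choose k}{n-1 \choose 2k}{n-2k-1 \choose (j-1)-k}=\frac{1}{k+1}{2k \choose k}{n-1 \choose 2k}{n-2k-1 \choose j-k-1},
\]
as claimed.

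There is no genuine obstacle here: all three ingredients --- Lemma~\ref{l-dyck}, Corollary~\ref{c-pkdes231}, and Theorem~\ref{t-pkdes231f} --- are already in hand, and the argument runs exactly parallel to the one for binary trees giving Corollary~\ref{c-tcnlc}. The one spot where I would proceed carefully is the off-by-one shift between $\des$ and $\pk$ on Dyck paths; this is what makes the last binomial coefficient come out as ${n-2k-1 \choose j-k-1}$ rather than ${n-2k-1 \choose j-k}$, and I would verify it on a small case (say $n=3$) before writing up the details.
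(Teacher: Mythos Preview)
Your proposal is correct and follows exactly the paper's approach: the paper derives Corollary~\ref{c-hkpk} directly from Lemma~\ref{l-dyck} together with Corollary~\ref{c-pkdes231} and Theorem~\ref{t-pkdes231f}, via the identification $D_n^{(\hk,\pk)}(y,t)=P^{(\pk,\des)}(\Av_n(231);y,t)$ coming from Stump's bijection $\Psi$. Your handling of the off-by-one shift (using $\des(\pi)=j-1$ when the Dyck path has $j$ peaks) is exactly right.
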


\subsection{The Petersen action and $(\protect\lpk,\protect\des)$ and $(\protect\lpk,\protect\val,\protect\des)$
polynomials}

Just as we showed that Theorem \ref{t-pkdes} can be generalized for
subsets of $\mathfrak{S}_{n}$ that are invariant under the modified
Foata--Strehl action, we will generalize Theorems \ref{t-lpkdesb}
and \ref{t-lpkvaldesb} for subsets of $\mathfrak{B}_{n}$ that are
invariant under a different $\mathbb{Z}_{2}^{n}$-action described
by Petersen in \cite[Section 13.2]{Petersen2015}. For this, we revert
to our original definitions of descent, peak, and valley as referring
to the index rather than the letter. Similarly, we redefine double
ascent and double descent to refer to the index as well.

For a permutation $\pi\in\mathfrak{S}_{n}$, let $\mathfrak{B}(\pi)$
be the set of signed permutations that can be obtained by choosing
any subset of the letters in $\pi$ and replacing them with their
negatives. For example, if $\pi=213$, then $\mathfrak{B}(\pi)=\{213,21\bar{3},2\bar{1}3,2\bar{1}\bar{3},\bar{2}13,\bar{2}1\bar{3},\bar{2}\bar{1}3,\bar{2}\bar{1}\bar{3}\}$.
By thinking of $\pi$ as a signed permutation, we see that $\pi$
is the unique representative of $\mathfrak{B}(\pi)$ without negative
signs, and $\mathfrak{B}(\pi)$ is the orbit of $\pi$ under the $\mathbb{Z}_{2}^{n}$-action
on $\mathfrak{B}_{n}$ given by taking a subset of letters and reversing
their signs.

Before proving our main results, we establish an important lemma that
will allow us to determine the distribution of $(\neg,\des_{B})$
or $(\neg,\fdes)$ over the orbit of any given $\pi\in\mathfrak{S}_{n}$
simply by knowing the number of peaks, valleys, double ascents, and
double descents of the permutation $\acute{\pi}=0\pi\infty$ of $\{0,\infty\}\cup[n]$.
\begin{lem}
\label{l-bdes} Let $\pi\in\mathfrak{S}_{n}$, $\acute{\pi}=0\pi\infty$,
and $\sigma\in\mathfrak{B}(\pi)$.

\begin{itemize}
\item If $i$ is a peak of $\acute{\pi}$, then $i-1$ is a descent of $\sigma$
if and only if $\sigma_{i}<0$, and $i$ is a descent of $\sigma$
if and only if $\sigma_{i}>0$.
\item If $i$ is a double ascent of $\acute{\pi}$, then $i-1$ is a descent
of $\sigma$ if and only if $\sigma_{i}<0$.
\item If $i$ is a double descent of $\acute{\pi}$, then $i$ is a descent
of $\sigma$ if and only if $\sigma_{i}>0$.
\end{itemize}
Moreover, every descent of $\sigma$ is accounted for exactly once
in the above.
\end{lem}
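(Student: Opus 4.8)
The plan is to reduce the global descent condition on $\sigma$ to a purely local condition involving a single position of $\acute{\pi}$ at a time. Two elementary facts drive the argument. First, for any word $w$, an index $j$ is a descent of $w$ (i.e. $w_j > w_{j+1}$) exactly when position $j$ of $w$ ``descends to the right'' — is a peak or double descent — and equivalently when position $j+1$ ``descends from the left'' — is a valley or double descent. Second, since every $\sigma \in \mathfrak{B}(\pi)$ has $|\sigma_i| = \pi_i = \acute{\pi}_i$ for $i \in [n]$ (with $\sigma_0 = 0$), whether $\sigma_j > \sigma_{j+1}$ is determined by the signs of $\sigma_j$ and $\sigma_{j+1}$ together with the comparison of $\acute{\pi}_j$ and $\acute{\pi}_{j+1}$. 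I would apply the first fact to $\acute{\pi}$ (where the shape is fixed) rather than to $\sigma$ (where it depends on the chosen signs).

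Concretely I would establish two sub-claims. \textbf{(S1)} If $\acute{\pi}_{i-1} < \acute{\pi}_i$ (that is, $i$ is a peak or a double ascent of $\acute{\pi}$), then $i-1$ is a descent of $\sigma$ if and only if $\sigma_i < 0$. \textbf{(S2)} If $\acute{\pi}_i > \acute{\pi}_{i+1}$ (that is, $i$ is a peak or a double descent of $\acute{\pi}$), then $i$ is a descent of $\sigma$ if and only if $\sigma_i > 0$. Each is a short sign check: for (S1) with $\sigma_i < 0$, the entry $\sigma_i = -\pi_i$ lies below $\sigma_{i-1}$ whatever the sign of $\sigma_{i-1}$ — because $\sigma_{i-1}$ is positive, or equals $0$ (when $i=1$), or equals $-\pi_{i-1}$ with $\pi_{i-1} < \pi_i$ — while if $\sigma_i > 0$ the reverse inequality $\sigma_{i-1} < \sigma_i$ holds for the same reasons; (S2) is symmetric. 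A valley of $\acute{\pi}$ satisfies the hypothesis of neither (S1) nor (S2). These sub-claims immediately give the three bulleted statements: a peak satisfies both hypotheses, so both (S1) and (S2) apply; a double ascent satisfies only the hypothesis of (S1); a double descent only that of (S2); a valley contributes nothing. I would also record the boundary bookkeeping — $\acute{\pi}_0 = 0$ lies below every letter and $\acute{\pi}_{n+1} = \infty$ above every letter, so position $1$ of $\acute{\pi}$ is never a double descent and position $n$ is never a peak or double descent; hence every index named ``$i$'' or ``$i-1$'' above actually lies in $\{0\} \cup [n-1]$, as required for it to be a descent of a signed $n$-permutation.

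For the ``exactly once'' clause I would partition the indices $j \in \{0\} \cup [n-1]$ according to whether $\acute{\pi}$ ascends or descends at $j$; this exhausts all cases since $\acute{\pi}_j \neq \acute{\pi}_{j+1}$ always. Running through the sign analysis: in the descending case $\acute{\pi}_j > \acute{\pi}_{j+1}$, a descent at $j$ occurs exactly when $\sigma_j > 0$, which is precisely the contribution of position $i = j$ via (S2), and no other position can contribute descent $j$ (position $j+1$ then descends from the left, so it is a valley or double descent, not a peak or double ascent); in the ascending case $\acute{\pi}_j < \acute{\pi}_{j+1}$, a descent at $j$ occurs exactly when $\sigma_{j+1} < 0$, which is precisely the contribution of position $i = j+1$ via (S1), and again no other position contributes (position $j$ then ascends to the right, so if it is a double ascent it contributes to descent $j-1$, not $j$). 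Since the two options inside a single peak are mutually exclusive ($\sigma_i \neq 0$ for $i \in [n]$), this shows every descent of $\sigma$ is produced exactly once.

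The main obstacle here is bookkeeping rather than any idea: keeping the sign cases and the $\sigma_0 = 0$, $\acute{\pi}_{n+1} = \infty$ conventions straight, and being careful to read the local shape off of $\acute{\pi}$ rather than off of $\sigma$. Once (S1) and (S2) are in hand, both the three bulleted equivalences and the ``accounted for exactly once'' statement follow at once.
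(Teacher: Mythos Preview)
Your proposal is correct and follows essentially the same approach as the paper: the paper proves the peak case by the same sign--magnitude comparison you use for (S1) and (S2) and then says the double ascent and double descent cases ``follow using similar reasoning,'' while for the ``exactly once'' clause it likewise fixes a potential descent position and splits into cases according to the local shape of $\acute{\pi}$. Your packaging into the two sub-claims (S1) and (S2) is a slightly cleaner organization of the same argument.
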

\begin{proof}
Suppose that $i$ is a peak of $\acute{\pi}$, so that $\pi_{i-1}<\pi_{i}>\pi_{i+1}$.\footnote{This is assuming that $i\neq1$, but the case $i=1$ follows similarly.}
If $\sigma_{i-1}>\sigma_{i}$, then $\sigma_{i-1}\leq\pi_{i-1}<\pi_{i}=\pm\sigma_{i}$,
which forces $\sigma_{i}=-\pi_{i}$ and thus $\sigma_{i}<0$. If $\sigma_{i}>\sigma_{i+1}$,
then $\sigma_{i+1}\leq\pi_{i+1}<\pi_{i}=\pm\sigma_{i}$, which forces
$\sigma_{i}=\pi_{i}$ and thus $\sigma_{i}>0$. On the other hand,
if $\sigma_{i}<0$, then $\sigma_{i}=-\pi_{i}<-\pi_{i-1}\leq\sigma_{i-1}$
and $\sigma_{i}=-\pi_{i}<-\pi_{i+1}\leq\sigma_{i+1}$, so $i-1$ is
a descent and $i$ is an ascent. The statements for double ascents
and double descents follow using similar reasoning.

We now verify that every descent of $\sigma$ is accounted for exactly
once. Fix an $i$ between 0 and $n-1$. If $i$ is a peak or double
descent of $\acute{\pi}$, then $i+1$ is either a double descent
or valley of $\acute{\pi}$. Then the sign of $\sigma_{i}$ determines
whether or not $i$ is a descent of $\sigma$, whereas the sign of
$\sigma_{i+1}$ has no effect on whether or not $i$ is a descent
of $\sigma$. If $i$ is a valley or double ascent of $\acute{\pi}$
or if $i=0$, then $i+1$ is either a double ascent or peak of $\acute{\pi}$.
Then the sign of $\sigma_{i+1}$ determines whether or not $i$ is
a descent of $\sigma$, whereas the sign of $\sigma_{i}$ has no effect
on whether or not $i$ is a descent of $\sigma$. This shows that
every descent is accounted for exactly once in the above.
\end{proof}
Given a set $\Pi\subseteq\mathfrak{S}_{n}$, let $\mathfrak{B}(\Pi)\coloneqq\bigcup_{\pi\in\Pi}\mathfrak{B}(\pi)$.
Also, define the polynomials 
\[
B(\Pi;y,t)\coloneqq\sum_{\pi\in\mathfrak{B}(\Pi)}y^{\neg(\pi)}t^{\des_{B}(\pi)}
\]
and 
\[
P^{(\lpk,\des)}(\Pi;y,t)\coloneqq\sum_{\pi\in\Pi}y^{\lpk(\pi)}t^{\des(\pi)}.
\]

\begin{thm}
\label{t-palpkdes} Let $\Pi\subseteq\mathfrak{S}_{n}$ for $n\geq0$.
Then 
\begin{equation}
B(\Pi;y,t)=(1+yt)^{n}P^{(\lpk,\des)}\left(\Pi;\frac{(1+y)^{2}t}{(y+t)(1+yt)},\frac{y+t}{1+yt}\right).\label{e-blpkdespa}
\end{equation}
Equivalently, 
\begin{equation}
P^{(\lpk,\des)}(\Pi;y,t)=\frac{B(\Pi;u,v)}{(1+uv)^{n}}\label{e-lpkdesbpa}
\end{equation}
where $u=\frac{1+t^{2}-2yt-(1-t)\sqrt{(1+t)^{2}-4yt}}{2(1-y)t}$ and
$v=\frac{(1+t)^{2}-2yt-(1+t)\sqrt{(1+t)^{2}-4yt}}{2yt}.$ 
\end{thm}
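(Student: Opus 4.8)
The plan is to prove the refined identity (\ref{e-blpkdespa}) directly, by first using Lemma \ref{l-bdes} to compute the joint distribution of $\neg$ and $\des_B$ over a single orbit $\mathfrak{B}(\pi)$, and then summing over $\pi \in \Pi$; the equivalent form (\ref{e-lpkdesbpa}) will follow from exactly the substitution used at the end of the proof of Theorem \ref{t-pkdes}.

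First I would fix $\pi \in \mathfrak{S}_n$ and record some bookkeeping for $\acute{\pi} = 0\pi\infty$. Each index $i \in [n]$ is a peak, valley, double ascent, or double descent of $\acute{\pi}$; since $\acute{\pi}$ begins and ends with an ascent, its peaks and valleys alternate beginning with a peak and ending with a valley, so $\acute{\pi}$ has equally many peaks and valleys. An easy induction (entirely analogous to the count for $\infty\pi\infty$ carried out in the proof of Theorem \ref{t-pkdesmfs}, together with Lemma \ref{l-udr}) shows that this common number is $\lpk(\pi)$, that $\acute{\pi}$ has $\des(\pi) - \lpk(\pi)$ double descents, and that it has $n - \des(\pi) - \lpk(\pi)$ double ascents.

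Next, since $\mathfrak{B}(\pi)$ consists of all $2^n$ sign patterns of $\pi$ and these sign choices are independent, the sum $\sum_{\sigma \in \mathfrak{B}(\pi)} y^{\neg(\sigma)} t^{\des_B(\sigma)}$ factors as a product over positions $i \in [n]$, the $i$th factor recording the two choices of sign for $\sigma_i$. By Lemma \ref{l-bdes}, each descent of $\sigma$ is governed by exactly one sign: a peak of $\acute{\pi}$ at $i$ forces exactly one of the descents $i-1,i$ (contributing $t+yt$, where the $y$ appears precisely when $\sigma_i<0$), a valley contributes $1+y$ (its sign affects no descent), a double ascent contributes $1+yt$, and a double descent contributes $y+t$. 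Multiplying these and inserting the counts above gives
\[
\sum_{\sigma \in \mathfrak{B}(\pi)} y^{\neg(\sigma)} t^{\des_B(\sigma)} = \big[t(1+y)\big]^{\lpk(\pi)} (1+y)^{\lpk(\pi)} (1+yt)^{n - \des(\pi) - \lpk(\pi)} (y+t)^{\des(\pi) - \lpk(\pi)}.
\]
The orbits $\mathfrak{B}(\pi)$ for distinct $\pi \in \mathfrak{S}_n$ are disjoint and partition $\mathfrak{B}(\Pi)$ (each orbit has a unique sign-free representative), so summing this over $\pi \in \Pi$, pulling out a factor $(1+yt)^n$, and regrouping the powers of $\lpk(\pi)$ and $\des(\pi)$ as $\big[(1+y)^2 t/((y+t)(1+yt))\big]^{\lpk(\pi)}$ and $\big[(y+t)/(1+yt)\big]^{\des(\pi)}$ respectively yields (\ref{e-blpkdespa}). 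Then (\ref{e-lpkdesbpa}) is obtained by setting $u = (1+y)^2 t/((y+t)(1+yt))$ and $v = (y+t)/(1+yt)$ and solving for $y,t$, exactly as in the proof of Theorem \ref{t-pkdes}.

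The main obstacle is the case analysis behind the per-position factorization: one must verify that Lemma \ref{l-bdes} assigns each of the $n$ descent slots $0,1,\dots,n-1$ of $\sigma$ to exactly one of the $n$ signs $\sigma_1,\dots,\sigma_n$ with no overlap (a peak absorbs two slots of which exactly one is always filled, a valley absorbs none, a double ascent or double descent absorbs one), so that the sum over $\mathfrak{B}(\pi)$ genuinely factors. The boundary indices $i=1$ and $i=n$, where $\acute{\pi}$ is forced to ascend on one side, should be checked separately but present no difficulty. The auxiliary counts of double ascents and double descents of $0\pi\infty$ are routine but worth stating carefully, since the corresponding computation recalled above was for $\infty\pi\infty$ rather than $0\pi\infty$.
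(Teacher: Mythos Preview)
Your proposal is correct and follows essentially the same route as the paper's proof: apply Lemma~\ref{l-bdes} to factor $\sum_{\sigma\in\mathfrak{B}(\pi)} y^{\neg(\sigma)}t^{\des_B(\sigma)}$ as a product over positions $i\in[n]$ with contributions $(1+y)t$, $1+y$, $1+yt$, $y+t$ according to whether $i$ is a peak, valley, double ascent, or double descent of $\acute{\pi}=0\pi\infty$; then compute these counts as $\lpk(\pi)$, $\lpk(\pi)$, $n-\lpk(\pi)-\des(\pi)$, $\des(\pi)-\lpk(\pi)$ respectively; sum over $\pi\in\Pi$; and finish with the inversion substitution from Theorem~\ref{t-pkdes}. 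The only cosmetic difference is that you obtain $\val(\acute{\pi})=\pk(\acute{\pi})$ from the alternation argument (first and last step of $\acute{\pi}$ are ascents), whereas the paper derives $\val(\acute{\pi})=\lpk(\pi)$ via the two cases of Lemma~\ref{l-udr}(d); both are equally valid and lead to the same formula.
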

By taking $\Pi=\mathfrak{S}_{n}$, we recover Theorem \ref{t-lpkdesb}.
\begin{proof}
For a fixed $\pi\in\Pi$, let $\acute{\pi}=0\pi\infty$. Then it follows
from Lemma \ref{l-bdes} that
\[
\sum_{\sigma\in\mathfrak{B}(\pi)}y^{\neg(\sigma)}t^{\des_{B}(\sigma)}=((1+y)t)^{\pk(\acute{\pi})}(1+y)^{\val(\acute{\pi})}(1+yt)^{\dasc(\acute{\pi})}(y+t)^{\ddes(\acute{\pi})}.
\]

Now, observe the following:

\begin{itemize}
\item The number of peaks of $\acute{\pi}$ is equal to $\lpk(\pi)$. Indeed,
every peak of $\pi$ is a peak of $\acute{\pi}$, but if $\pi$ begins
with a descent, then that contributes an additional peak to $\acute{\pi}$.
\item The number of double descents of $\acute{\pi}$ is equal to $\des(\pi)-\lpk(\pi)$,
since the set of descents of $\pi$ is equal to the set of double
descents and peaks of $\acute{\pi}$.
\item The number of valleys of $\acute{\pi}$ is equal to $\lpk(\pi)$,
but this is harder to see. Suppose that $n-1$ is a descent of $\pi$.
Then $\val(\acute{\pi})=\val(\pi)+1$. We know from Lemma \ref{l-udr}
that, in this case, $\lpk(\pi)=\val(\pi)+1$, so indeed $\val(\acute{\pi})=\lpk(\pi)$.
If $n-1$ is not a descent of $\pi$, then $\val(\acute{\pi})=\val(\pi)$,
and since we know that $\lpk(\pi)=\val(\pi)$ from Lemma \ref{l-udr},
the claim follows.
\item The number of double ascents of $\acute{\pi}$ is equal to $n-\lpk(\pi)-\des(\pi)$,
since
\begin{align*}
\dasc(\acute{\pi}) & =n-(\pk(\acute{\pi})+\val(\acute{\pi})+\ddes(\acute{\pi}))\\
 & =n-(\lpk(\pi)+\lpk(\pi)+\des(\pi)-\lpk(\pi)\\
 & =n-\lpk(\pi)-\des(\pi).
\end{align*}
\end{itemize}
Hence, we have 
\begin{align*}
\sum_{\sigma\in\mathfrak{B}(\pi)}y^{\neg(\pi)}t^{\des_{B}(\sigma)} & =((1+y)^{2}t)^{\lpk(\pi)}(y+t)^{\des(\pi)-\lpk(\pi)}(1+yt)^{n-\lpk(\pi)-\des(\pi)}\\
 & =(1+yt)^{n}\left(\frac{(1+y)^{2}t}{(y+t)(1+yt)}\right)^{\lpk(\pi)}\left(\frac{y+t}{1+yt}\right)^{\des(\pi)}.
\end{align*}
Summing over all $\pi\in\Pi$ yields (\ref{e-blpkdespa}), and (\ref{e-lpkdesbpa})
is obtained via the same substitutions as before.
\end{proof}
By setting $y=1$ in (\ref{e-blpkdespa}), we obtain the following
specialization. For $\Pi\subseteq\mathfrak{S}_{n}$, define 
\[
B(\Pi;t)\coloneqq\sum_{\pi\in\mathfrak{B}(\Pi)}t^{\des_{B}(\pi)}
\]
and 
\[
P^{\lpk}(\Pi;t)\coloneqq\sum_{\pi\in\Pi}t^{\lpk(\pi)}.
\]

\begin{cor}
\label{c-palpk} Let $\Pi\subseteq\mathfrak{S}_{n}$ for $n\geq0$.
Then 
\[
B(\Pi;t)=(1+t)^{n}P^{\lpk}\left(\Pi;\frac{4t}{(1+t)^{2}}\right).
\]
Equivalently, 
\[
P^{\lpk}(\Pi;t)=\frac{B(\Pi;v)}{(1+v)^{n}}
\]
where $v=\frac{2}{t}(1-\sqrt{1-t})-1$.
\end{cor}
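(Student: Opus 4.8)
The plan is to obtain Corollary~\ref{c-palpk} as the $y = 1$ specialization of Theorem~\ref{t-palpkdes}, so the argument is pure bookkeeping. First I would substitute $y = 1$ into~(\ref{e-blpkdespa}). On the left, $B(\Pi;1,t) = \sum_{\pi \in \mathfrak{B}(\Pi)} t^{\des_{B}(\pi)} = B(\Pi;t)$ by definition. On the right, the prefactor $(1+yt)^{n}$ becomes $(1+t)^{n}$, the first argument of $P^{(\lpk,\des)}$ becomes $\frac{(1+1)^{2}t}{(1+t)(1+t)} = \frac{4t}{(1+t)^{2}}$, and the second argument becomes $\frac{1+t}{1+t} = 1$. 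Since $P^{(\lpk,\des)}(\Pi;s,1) = \sum_{\pi \in \Pi} s^{\lpk(\pi)} 1^{\des(\pi)} = P^{\lpk}(\Pi;s)$ for any $s$, the identity $B(\Pi;t) = (1+t)^{n} P^{\lpk}\!\big(\Pi;\frac{4t}{(1+t)^{2}}\big)$ follows at once.

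For the equivalent inverse form I would solve $s = \frac{4v}{(1+v)^{2}}$ for $v$ as a formal power series in $s$ vanishing at $s = 0$: clearing denominators gives $s v^{2} + (2s - 4)v + s = 0$, and the quadratic formula together with the branch condition $v \to 0$ as $s \to 0$ picks out $v = \frac{2}{s}(1 - \sqrt{1-s}) - 1$ --- the same substitution already used in the ``equivalently'' clause of Theorem~\ref{t-pkdes} with $y = 1$. Replacing $t$ by this $v$ in the first identity, using $\frac{4v}{(1+v)^{2}} = s$, and then renaming $s$ back to $t$, I obtain $P^{\lpk}(\Pi;t) = B(\Pi;v)/(1+v)^{n}$, which is the second display.

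I expect essentially no obstacle. The only point worth a sentence of care is that $v = \frac{2}{t}(1 - \sqrt{1-t}) - 1$ must be read as the unique formal power series solution of $\frac{4v}{(1+v)^{2}} = t$ with $v(0) = 0$, so that substituting it into a generating-function identity is legitimate; since this convention is already fixed earlier in the paper, the corollary is immediate from the two specializations above.
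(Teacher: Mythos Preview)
Your proposal is correct and follows exactly the paper's approach: the paper simply states that the corollary is obtained by setting $y=1$ in~(\ref{e-blpkdespa}), and you have carried out that specialization explicitly (including the inversion for the ``equivalently'' clause, which the paper leaves implicit).
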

Similarly to Corollary \ref{c-pkdesst}, we can refine Theorem \ref{t-palpkdes}
by statistics that are constant on any orbit of the described action.
These are the statistics $\st$ on $\mathfrak{B}$ such that when
$\sigma,\tau\in\mathfrak{B}(\pi)$---that is, if $\sigma$ and $\tau$
reduce to the same $\pi\in\mathfrak{S}$ when their negative signs
are removed---we have $\st(\sigma)=\st(\tau)$, or in other words,
statistics $\st$ that depend only on the underlying unsigned permutation.
Clearly, these are the permutation statistics on $\mathfrak{S}$.\footnote{More precisely, each such statistic induces a permutation statistic
on $\mathfrak{S}$, and conversely, each permutation statistic on
$\mathfrak{S}$ induces a statistic on $\mathfrak{B}$ that is constant
on every orbit.} Thus, for a statistic $\st$ on $\mathfrak{S}$ and a signed permutation
$\sigma\in\mathfrak{B}$, we let $\st(\sigma)$ mean the value of
$\st$ on the unsigned permutation obtained by removing all negative
signs from $\sigma$. 

For a permutation statistic $\st$ on $\mathfrak{S}$, define

\[
B^{\st}(\Pi;y,t,w)\coloneqq\sum_{\pi\in\mathfrak{B}(\Pi)}y^{\neg(\pi)}t^{\des_{B}(\pi)}w^{\st(\pi)}
\]
and 
\[
P^{(\lpk,\des,\st)}(\Pi;y,t,w)\coloneqq\sum_{\pi\in\Pi}y^{\lpk(\pi)}t^{\des(\pi)}w^{\st(\pi)}.
\]

\begin{cor}
Let $\st$ be a permutation statistic on $\mathfrak{S}$ and let $\Pi\subseteq\mathfrak{S}_{n}$
for $n\geq0$. Then 
\[
B^{\st}(\Pi;y,t,w)=(1+yt)^{n}P^{(\lpk,\des,\st)}\left(\Pi;\frac{(1+y)^{2}t}{(y+t)(1+yt)},\frac{y+t}{1+yt},w\right).
\]
Equivalently, 
\[
P^{(\lpk,\des,\st)}(\Pi;y,t,w)=\frac{B^{\st}(\Pi;u,v,w)}{(1+uv)^{n}}
\]
where $u=\frac{1+t^{2}-2yt-(1-t)\sqrt{(1+t)^{2}-4yt}}{2(1-y)t}$ and
$v=\frac{(1+t)^{2}-2yt-(1+t)\sqrt{(1+t)^{2}-4yt}}{2yt}.$ 
\end{cor}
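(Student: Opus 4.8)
The plan is to re-run the orbit-by-orbit argument from the proof of Theorem \ref{t-palpkdes}, now carrying along the extra variable $w$. The crucial observation is that, by the standing hypothesis on $\st$, the quantity $w^{\st(\sigma)}$ is constant as $\sigma$ ranges over $\mathfrak{B}(\pi)$ for any fixed $\pi\in\mathfrak{S}_n$: every $\sigma\in\mathfrak{B}(\pi)$ reduces to $\pi$ upon deleting negative signs, so $\st(\sigma)=\st(\pi)$ by definition. Hence $w^{\st}$ can be factored out of each orbit sum, and the rest of the computation is untouched.

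Concretely, I would recall the key computation inside the proof of Theorem \ref{t-palpkdes}: for a fixed $\pi\in\Pi$, writing $\acute{\pi}=0\pi\infty$ and using Lemma \ref{l-bdes} together with the identifications $\pk(\acute{\pi})=\val(\acute{\pi})=\lpk(\pi)$, $\ddes(\acute{\pi})=\des(\pi)-\lpk(\pi)$, and $\dasc(\acute{\pi})=n-\lpk(\pi)-\des(\pi)$, one obtains
\[
\sum_{\sigma\in\mathfrak{B}(\pi)}y^{\neg(\sigma)}t^{\des_{B}(\sigma)}=(1+yt)^{n}\left(\frac{(1+y)^{2}t}{(y+t)(1+yt)}\right)^{\lpk(\pi)}\left(\frac{y+t}{1+yt}\right)^{\des(\pi)}.
\]
Multiplying both sides by $w^{\st(\pi)}$ and using $\st(\sigma)=\st(\pi)$ on $\mathfrak{B}(\pi)$ gives
\[
\sum_{\sigma\in\mathfrak{B}(\pi)}y^{\neg(\sigma)}t^{\des_{B}(\sigma)}w^{\st(\sigma)}=(1+yt)^{n}\left(\frac{(1+y)^{2}t}{(y+t)(1+yt)}\right)^{\lpk(\pi)}\left(\frac{y+t}{1+yt}\right)^{\des(\pi)}w^{\st(\pi)}.
\]
Since $\mathfrak{B}(\Pi)=\bigcup_{\pi\in\Pi}\mathfrak{B}(\pi)$ is a disjoint union — each orbit $\mathfrak{B}(\pi)$ contains a unique sign-free representative, namely $\pi$ itself — summing the last display over all $\pi\in\Pi$ yields the first displayed identity of the corollary. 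The second (``equivalently'') identity then follows by performing exactly the same change of variables $u=\frac{1+t^{2}-2yt-(1-t)\sqrt{(1+t)^{2}-4yt}}{2(1-y)t}$ and $v=\frac{(1+t)^{2}-2yt-(1+t)\sqrt{(1+t)^{2}-4yt}}{2yt}$ used in the proofs of Theorems \ref{t-pkdes} and \ref{t-palpkdes}, since $w$ plays no role whatsoever in that substitution.

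There is no genuine obstacle here: the argument is a direct refinement, and the only point requiring care is the assertion that $w^{\st}$ is orbit-constant, which is immediate from the discussion preceding the corollary identifying permutation statistics on $\mathfrak{S}$ with the statistics on $\mathfrak{B}$ that are constant on every orbit of the Petersen sign-reversal action. If anything, the mild subtlety is purely bookkeeping — making sure that $\mathfrak{B}(\Pi)$ is genuinely partitioned by the orbits $\mathfrak{B}(\pi)$, $\pi\in\Pi$ — and this is exactly the uniqueness of the sign-free representative noted above.
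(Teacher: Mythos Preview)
Your proposal is correct and matches the paper's approach exactly: the corollary is stated without an explicit proof in the paper, but the preceding discussion makes clear that it follows by re-running the orbit computation of Theorem~\ref{t-palpkdes} while carrying the orbit-constant factor $w^{\st(\pi)}$, which is precisely what you do. The only subtleties you flag---orbit-constancy of $\st$ and disjointness of the $\mathfrak{B}(\pi)$---are handled correctly.
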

To conclude this section, we state the corresponding results for the statistics $(\lpk,\val,\des)$
and $(\neg,\fdes)$. For $\Pi\subseteq\mathfrak{S}_{n}$, define 
\[
F(\Pi;y,t)\coloneqq\sum_{\pi\in\mathfrak{B}(\Pi)}y^{\neg(\pi)}t^{\fdes(\pi)}
\]
and

\[
P^{(\lpk,\val,\des)}(\Pi;y,z,t)\coloneqq\sum_{\pi\in\Pi}y^{\lpk(\pi)}z^{\val(\pi)}t^{\des(\pi)}.
\]

Recall that the \textit{reverse complement} $\pi^{rc}$ of a permutation
$\pi=\pi_{1}\pi_{2}\cdots\pi_{n}\in\mathfrak{S}_{n}$ is given by
\[
\pi^{rc}\coloneqq(n+1-\pi_{n})(n+1-\pi_{n-1})\cdots(n+1-\pi_{1}).
\]
That is, we first reverse the order of the letters in $\pi$, and
then replace the $i$th smallest letter with the $i$th largest letter
for all $1\leq i\leq n$. For example, if $\pi=1723465$, then the
reverse complement of $\pi$ is given by $\pi^{rc}=3245617$. Also,
let the reverse complement $\Pi^{rc}$ of a set of permutations $\Pi\subseteq\mathfrak{S}_{n}$
be the set of their reverse complements: $\Pi^{rc}\coloneqq\{\,\pi^{rc}\mid\pi\in\Pi\,\}.$

In the proof of the next theorem, we will be working with the reverse
complement of the permutation $\acute{\pi}=0\pi\infty$ of $\{0,\infty\}\cup[n]$
where $\pi\in\mathfrak{S}_{n}$, which we define to be $\acute{\pi}^{rc}\coloneqq0\pi^{rc}\infty$. 
\begin{thm}
\label{t-palpkvaldes} Let $\Pi\subseteq\mathfrak{S}_{n}$ for $n\geq1$.
Then 
\begin{multline*}
F(\Pi;y,t)=(1+yt)(1+yt^{2})^{n-1}\\
\times P^{(\lpk,\val,\des)}\left(\Pi^{rc};\frac{t(1+y)(y+t)}{(y+t^{2})(1+yt)},\frac{t(1+y)(1+yt)}{(1+yt^{2})(y+t)},\frac{y+t^{2}}{1+yt^{2}}\right).
\end{multline*}
\end{thm}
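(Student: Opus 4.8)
The plan is to adapt the proof of Theorem~\ref{t-palpkdes}, replacing $\des_B$ by $\fdes$ everywhere and tracking the one feature that distinguishes $\fdes$ from $2\des_B$: a descent at position $0$ is weighted by $t$ rather than $t^2$. First I would establish, for each $\pi\in\mathfrak{S}_n$ (with $\acute{\pi}=0\pi\infty$), the per-orbit identity
\begin{multline*}
\sum_{\sigma\in\mathfrak{B}(\pi)}y^{\neg(\sigma)}t^{\fdes(\sigma)}=(1+yt)(1+yt^{2})^{n-1}\\
\times\left(\frac{t(1+y)(y+t)}{(y+t^{2})(1+yt)}\right)^{\lpk(\pi^{rc})}\left(\frac{t(1+y)(1+yt)}{(1+yt^{2})(y+t)}\right)^{\val(\pi^{rc})}\left(\frac{y+t^{2}}{1+yt^{2}}\right)^{\des(\pi^{rc})}.
\end{multline*}
By Lemma~\ref{l-bdes}, the sign choices producing an element of $\mathfrak{B}(\pi)$ are in bijection with the possible descent sets, so $\sum_{\sigma}y^{\neg(\sigma)}t^{\fdes(\sigma)}$ factors as a product, over the interior indices $i\in[n]$ of $\acute{\pi}$, of a local factor depending only on the sign of $\sigma_i$ and on the type (peak, valley, double ascent, or double descent) of $i$ in $\acute{\pi}$. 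For $i\ge 2$ any descent produced lies at a position $\ge 1$, so the local factors are exactly those from the $\des_B$ computation with $t$ replaced by $t^2$: namely $(1+y)t^2$, $1+y$, $1+yt^2$, $y+t^2$ at peaks, valleys, double ascents, double descents respectively. The index $i=1$ is always a peak or double ascent of $\acute{\pi}$, and by Lemma~\ref{l-bdes} it is the only index that can create the position-$0$ descent, which occurs exactly when $\sigma_1<0$; since that descent carries weight $t$, the local factor at $i=1$ is $yt+t^2$ when $1\in\Des(\pi)$ and $1+yt$ when $1\notin\Des(\pi)$.

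Next I would rewrite the resulting product in terms of statistics of $\pi^{rc}$, which is where the reverse complement enters. The reverse complement $\acute{\pi}^{rc}=0\pi^{rc}\infty$ of $\acute{\pi}$ interchanges peaks and valleys and preserves the classes of double ascents and double descents, carrying index $i$ of $\acute{\pi}$ to index $n+1-i$ of $\acute{\pi}^{rc}$; in particular it moves the anomalous index $1$ of $\acute{\pi}$ to index $n$ of $\acute{\pi}^{rc}$, and the condition $1\in\Des(\pi)$ becomes ``index $n$ of $\acute{\pi}^{rc}$ is a valley,'' i.e.\ $n-1\in\Des(\pi^{rc})$. Applying the dictionary from the proof of Theorem~\ref{t-palpkdes} to the permutation $\pi^{rc}$ --- $\pk(\acute{\pi}^{rc})=\val(\acute{\pi}^{rc})=\lpk(\pi^{rc})$, $\ddes(\acute{\pi}^{rc})=\des(\pi^{rc})-\lpk(\pi^{rc})$, and $\dasc(\acute{\pi}^{rc})=n-\lpk(\pi^{rc})-\des(\pi^{rc})$ --- converts the peak/valley/double-ascent/double-descent tallies (with index $1$, resp.\ index $n$, removed from the appropriate count) into expressions in $\lpk(\pi^{rc})$, $\val(\pi^{rc})$, $\des(\pi^{rc})$. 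There then remain two cases, according to whether $\lpk(\pi^{rc})=\val(\pi^{rc})$ or $\lpk(\pi^{rc})=\val(\pi^{rc})+1$ --- equivalently, by Lemma~\ref{l-udr}(d), whether $n-1\notin\Des(\pi^{rc})$ or $n-1\in\Des(\pi^{rc})$ --- and these are precisely the two possibilities for the index-$1$ factor. A direct algebraic simplification shows that in both cases the product collapses to the single expression displayed above; this coincidence, and hence the appearance of $\Pi^{rc}$ rather than $\Pi$ in the statement, is the crux of the argument.

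Finally, because $\mathfrak{B}(\Pi)=\bigsqcup_{\pi\in\Pi}\mathfrak{B}(\pi)$ is a disjoint union (each orbit $\mathfrak{B}(\pi)$ has $\pi$ as its unique unsigned representative), summing the per-orbit identity over $\pi\in\Pi$ yields $F(\Pi;y,t)$ on the left, and on the right --- after reindexing the sum by $\pi\mapsto\pi^{rc}$ --- the factor $(1+yt)(1+yt^2)^{n-1}$ times $P^{(\lpk,\val,\des)}(\Pi^{rc};\,\cdot\,,\,\cdot\,,\,\cdot\,)$ evaluated at the three rational functions in the statement, which is the claimed identity. I expect the main obstacle to be the index-$1$ case analysis together with the bookkeeping needed to verify that both cases produce the same closed form; once the reverse complement is used to relocate the anomaly to the $\infty$-end of $\acute{\pi}^{rc}$ and Lemma~\ref{l-udr}(d) is invoked, the remainder is a routine transcription of the proof of Theorem~\ref{t-palpkdes} with $t$ replaced by $t^2$ away from position $0$.
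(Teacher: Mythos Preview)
Your proposal is correct and follows essentially the same approach as the paper: both arguments use Lemma~\ref{l-bdes} to factor the orbit sum over $\mathfrak{B}(\pi)$ into local contributions, isolate the anomalous index $1$ (where the position-$0$ descent carries weight $t$ rather than $t^2$), pass to the reverse complement to relocate this anomaly to index $n$ of $\acute{\pi}^{rc}$, and then invoke Lemma~\ref{l-udr}(d) together with the peak/valley/double-ascent/double-descent dictionary from the proof of Theorem~\ref{t-palpkdes} to reduce the two cases to a single closed form in $\lpk(\pi^{rc})$, $\val(\pi^{rc})$, $\des(\pi^{rc})$.
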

By taking $\Pi=\mathfrak{S}_{n}$, we recover Theorem \ref{t-lpkvaldesb}.
\begin{proof}
For a fixed $\pi\in\Pi$ and $\sigma\in\mathfrak{B}(\pi)$, let $\acute{\pi}=0\pi\infty$.
Then it follows from Lemma \ref{l-bdes} that
\[
\sum_{\sigma\in\mathfrak{B}(\pi)}y^{\neg(\sigma)}t^{\fdes(\sigma)}=c_{1}(\pi)\cdots c_{n}(\pi)
\]
where 
\[
c_{1}(\pi)=\begin{cases}
1+yt, & \mbox{if }\pi_{1}<\pi_{2},\\
t(y+t), & \mbox{if }\pi_{1}>\pi_{2},
\end{cases}
\]
and for $i>1$, 
\[
c_{i}(\pi)=\begin{cases}
t^{2}(1+y), & \mbox{if }i\mbox{ is a peak of }\acute{\pi},\\
1+y, & \mbox{if }i\mbox{ is a valley of }\acute{\pi},\\
1+yt^{2}, & \mbox{if }i\mbox{ is a double ascent of }\acute{\pi},\\
y+t^{2}, & \mbox{if }i\mbox{ is a double descent of }\acute{\pi}.
\end{cases}
\]
By taking the reverse complement of $\acute{\pi}$, we have that 
\[
\sum_{\sigma\in\mathfrak{B}(\pi)}y^{\neg(\sigma)}t^{\fdes(\sigma)}=d_{1}(\pi)\cdots d_{n}(\pi)
\]
where
\[
d_{n}(\pi)=\begin{cases}
1+yt, & \mbox{if }\pi_{n-1}^{rc}<\pi_{n}^{rc},\\
t(y+t), & \mbox{if }\pi_{n-1}^{rc}>\pi_{n}^{rc},
\end{cases}
\]
and for $i<n$,
\[
d_{i}(\pi)=\begin{cases}
t^{2}(1+y), & \mbox{if }i\mbox{ is a valley of }\acute{\pi}^{rc},\\
1+y, & \mbox{if }i\mbox{ is a peak of }\acute{\pi}^{rc},\\
1+yt^{2}, & \mbox{if }i\mbox{ is a double ascent of }\acute{\pi}^{rc},\\
y+t^{2}, & \mbox{if }i\mbox{ is a double descent of }\acute{\pi}^{rc}.
\end{cases}
\]
It follows from Lemma \ref{l-udr} that $d_{n}(\pi)=(t(y+t))^{\lpk(\pi^{rc})-\val(\pi^{rc})}(1+yt)^{1+\val(\pi^{rc})-\lpk(\pi^{rc})}$;
the $1+yt$ factor disappears if $n$ is a valley of $\acute{\pi}^{rc}$,
and the $t(y+t)$ factor disappears if instead $n$ is a double ascent
of $\acute{\pi}^{rc}$, which are the only two possibilities.

Next, we determine the contribution of the $d_{i}(\pi)$ for $i<n$
where $i$ is a peak or valley of $\acute{\pi}^{rc}$. The remaining
number of valleys of $\acute{\pi}^{rc}$---that is, not including
$n$---is equal to $\val(\pi^{rc})$, and the number of peaks of
$\acute{\pi}^{rc}$ is equal to $\lpk(\pi^{rc})$. When $n$ is a
valley of $\acute{\pi}^{rc}$ and thus $\lpk(\pi^{rc})=\val(\pi^{rc})+1$,
the total contribution from the peaks, valleys, and the final letter
is 
\begin{multline*}
t^{\lpk(\pi^{rc})+\val(\pi^{rc})-1}(1+y)^{\lpk(\pi^{rc})+\val(\pi^{rc})}(t(y+t))^{\lpk(\pi^{rc})-\val(\pi^{rc})}(1+yt)^{1+\val(\pi^{rc})-\lpk(\pi^{rc})}=\\
t^{\lpk(\pi^{rc})+\val(\pi^{rc})}(1+y)^{\lpk(\pi^{rc})+\val(\pi^{rc})}(y+t)^{\lpk(\pi^{rc})-\val(\pi^{rc})}(1+yt)^{1+\val(\pi^{rc})-\lpk(\pi^{rc})}.
\end{multline*}
On the other hand, when $n$ is a double ascent of $\acute{\pi}^{rc}$
and thus $\lpk(\pi^{rc})=\val(\pi^{rc})$, the contribution is 
\begin{multline*}
t^{\lpk(\pi^{rc})+\val(\pi^{rc})}(1+y)^{\lpk(\pi^{rc})+\val(\pi^{rc})}(t(y+t))^{\lpk(\pi^{rc})-\val(\pi^{rc})}(1+yt)^{1+\val(\pi^{rc})-\lpk(\pi^{rc})}=\\
t^{\lpk(\pi^{rc})+\val(\pi^{rc})}(1+y)^{\lpk(\pi^{rc})+\val(\pi^{rc})}(y+t)^{\lpk(\pi^{rc})-\val(\pi^{rc})}(1+yt)^{1+\val(\pi^{rc})-\lpk(\pi^{rc})},
\end{multline*}
so in fact the contribution from the two cases are the same.

The remaining contribution comes from the double descents and (remaining)
double ascents of $\acute{\pi}^{rc}$. Note that the number of double
descents of $\acute{\pi}^{rc}$ is equal to $\des(\pi^{rc})-\lpk(\pi^{rc})$.
It is a bit tricker to see that the number of remaining double ascents
of $\acute{\pi}^{rc}$ is equal to $n-1-\val(\pi^{rc})-\des(\pi^{rc})$.
We know that the total number of double ascents of $\acute{\pi}^{rc}$
is equal to $n-\lpk(\pi^{rc})-\des(\pi^{rc})$. If $n$ is a valley
of $\acute{\pi}^{rc}$, then $\lpk(\pi^{rc})=\val(\pi^{rc})+1$, so
the number of remaining double ascents of $\acute{\pi}^{rc}$ is equal
to 
\begin{align*}
n-\lpk(\pi^{rc})-\des(\pi^{rc}) & =n-(\val(\pi^{rc})+1)-\des(\pi^{rc})\\
 & =n-1-\val(\pi^{rc})-\des(\pi^{rc}).
\end{align*}
Otherwise, if $n$ is a double ascent of $\acute{\pi}^{rc}$, then
$\lpk(\pi^{rc})=\val(\pi^{rc})$, so the remaining number of double
ascents of $\acute{\pi}^{rc}$ is again equal to 
\[
(n-\lpk(\pi^{rc})-\des(\pi^{rc}))-1=n-1-\val(\pi^{rc})-\des(\pi^{rc}).
\]
Thus, the contribution from the double descents and (remaining) double
ascents is 
\[
(y+t^{2})^{\des(\pi^{rc})-\lpk(\pi^{rc})}(1+yt^{2})^{n-1-\val(\pi^{rc})-\des(\pi^{rc})}.
\]

Therefore, 
\begin{align*}
\sum_{\sigma\in\mathfrak{B}(\pi)}y^{\neg(\sigma)}t^{\fdes(\sigma)} & =d_{1}(\pi)\cdots d_{n}(\pi)\\
 & =(t(1+y))^{\lpk(\pi^{rc})+\val(\pi^{rc})}(y+t)^{\lpk(\pi^{rc})-\val(\pi^{rc})}(1+yt)^{1+\val(\pi^{rc})-\lpk(\pi^{rc})}\\
 & \qquad\qquad\qquad\qquad\quad\times(y+t^{2})^{\des(\pi^{rc})-\lpk(\pi^{rc})}(1+yt^{2})^{n-1-\val(\pi^{rc})-\des(\pi^{rc})}\\
 & =(1+yt)(1+yt^{2})^{n-1}\left(\frac{t(1+y)(y+t)}{(y+t^{2})(1+yt)}\right)^{\lpk(\pi^{rc})}\\
 & \qquad\qquad\qquad\qquad\qquad\quad\times\left(\frac{t(1+y)(1+yt)}{(1+yt^{2})(y+t)}\right)^{\val(\pi^{rc})}\left(\frac{y+t^{2}}{1+yt^{2}}\right)^{\des(\pi^{rc})},
\end{align*}
and summing over all $\pi\in\Pi$ yields the result.
\end{proof}
By setting $y=1$, we obtain the following specialization relating
the polynomials 
\[
F(\Pi;t)\coloneqq\sum_{\pi\in\mathfrak{B}(\Pi)}t^{\fdes(\pi)}
\]
and

\[
P^{\udr}(\Pi;t)\coloneqq\sum_{\pi\in\Pi}t^{\udr(\pi)},
\]
thus generalizing Corollary \ref{c-fudr}.
\begin{cor}
\label{c-paudr} Let $\Pi\subseteq\mathfrak{S}_{n}$ for $n\geq1$.
Then 
\[
F(\Pi;t)=\frac{(1+t)(1+t^{2})^{n}}{2t}P^{\udr}\left(\Pi^{rc};\frac{2t}{1+t^{2}}\right).
\]
Equivalently, 
\[
P^{\udr}(\Pi;t)=\frac{2v}{(1+v)(1+v^{2})^{n}}F(\Pi^{rc};v)
\]
where $v=\frac{1-\sqrt{1-t^{2}}}{t}$.
\end{cor}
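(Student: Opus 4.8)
The plan is to derive both identities as the $y=1$ specialization of Theorem~\ref{t-palpkvaldes}, together with part~(a) of Lemma~\ref{l-udr}. First I would substitute $y=1$ into the statement of Theorem~\ref{t-palpkvaldes}. A short direct computation shows that the first two arguments of $P^{(\lpk,\val,\des)}$, namely $\frac{t(1+y)(y+t)}{(y+t^{2})(1+yt)}$ and $\frac{t(1+y)(1+yt)}{(1+yt^{2})(y+t)}$, both collapse to $\frac{2t}{1+t^{2}}$, while the third argument $\frac{y+t^{2}}{1+yt^{2}}$ becomes $1$; similarly $(1+yt)$ becomes $1+t$ and $(1+yt^{2})^{n-1}$ becomes $(1+t^{2})^{n-1}$. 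Since $F(\Pi;1,t)=F(\Pi;t)$, this yields
\[
F(\Pi;t)=(1+t)(1+t^{2})^{n-1}\,P^{(\lpk,\val,\des)}\left(\Pi^{rc};\tfrac{2t}{1+t^{2}},\tfrac{2t}{1+t^{2}},1\right).
\]

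The next step is to rewrite $P^{(\lpk,\val,\des)}(\Pi^{rc};s,s,1)$, where $s=\frac{2t}{1+t^{2}}$. By definition this equals $\sum_{\pi\in\Pi^{rc}}s^{\lpk(\pi)+\val(\pi)}$, and Lemma~\ref{l-udr}(a) gives $\lpk(\pi)+\val(\pi)=\udr(\pi)-1$, so the sum equals $s^{-1}\sum_{\pi\in\Pi^{rc}}s^{\udr(\pi)}=s^{-1}P^{\udr}(\Pi^{rc};s)$. Plugging this back in and using $s^{-1}=\frac{1+t^{2}}{2t}$ produces
\[
F(\Pi;t)=(1+t)(1+t^{2})^{n-1}\cdot\frac{1+t^{2}}{2t}\,P^{\udr}\left(\Pi^{rc};\tfrac{2t}{1+t^{2}}\right)=\frac{(1+t)(1+t^{2})^{n}}{2t}\,P^{\udr}\left(\Pi^{rc};\tfrac{2t}{1+t^{2}}\right),
\]
which is the first displayed identity.

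To obtain the equivalent form, I would apply the identity just established with $\Pi$ replaced by $\Pi^{rc}$, using $(\Pi^{rc})^{rc}=\Pi$, to get $F(\Pi^{rc};t)=\frac{(1+t)(1+t^{2})^{n}}{2t}P^{\udr}(\Pi;\frac{2t}{1+t^{2}})$, and then perform the change of variable $t\mapsto v$ with $v=\frac{1-\sqrt{1-t^{2}}}{t}$, which is the branch of the inverse of $s\mapsto\frac{2s}{1+s^{2}}$ satisfying $\frac{2v}{1+v^{2}}=t$. Solving the resulting equation for $P^{\udr}(\Pi;t)$ gives the second identity. None of these steps presents a genuine obstacle; the only points requiring care are the bookkeeping of the reverse-complement operation through the substitution---since $\Pi^{rc}$ ends up on the opposite side after the change of variable---and checking that the two rational arguments really do simplify to the single expression $\frac{2t}{1+t^{2}}$, which is the one place where an arithmetic slip could occur.
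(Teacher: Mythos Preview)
Your proposal is correct and follows essentially the same approach as the paper: the paper derives the corollary by setting $y=1$ in Theorem~\ref{t-palpkvaldes} and simplifying (implicitly using Lemma~\ref{l-udr}(a) to pass from $\lpk+\val$ to $\udr-1$), exactly as you do. Your treatment is in fact more explicit than the paper's, which merely announces the specialization; your handling of the reverse-complement bookkeeping for the second identity and the verification that $v$ inverts $s\mapsto 2s/(1+s^{2})$ are both correct.
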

Finally, we give a refinement of Theorem \ref{t-palpkvaldes} that
keeps track of a statistic which is constant on any orbit of the action,
i.e., a statistic on $\mathfrak{S}$. For such a statistic $\st$,
let

\[
F^{\st}(\Pi;y,t,w)\coloneqq\sum_{\pi\in\mathfrak{B}(\Pi)}y^{\neg(\pi)}t^{\fdes(\pi)}w^{\st(\pi)}
\]
and 
\[
P^{(\lpk,\val,\des,\st)}(\Pi;y,z,t,w)\coloneqq\sum_{\pi\in\Pi}y^{\lpk(\pi)}z^{\val(\pi)}t^{\des(\pi)}w^{\st(\pi)}.
\]

\begin{cor}
Let $\st$ be a permutation statistic on $\mathfrak{S}$. Then 
\begin{multline*}
F^{\st}(\Pi;y,t,w)=(1+yt)(1+yt^{2})^{n-1}\\
\times P^{(\lpk,\val,\des,\st)}\left(\Pi^{rc};\frac{t(1+y)(y+t)}{(y+t^{2})(1+yt)},\frac{t(1+y)(1+yt)}{(1+yt^{2})(y+t)},\frac{y+t^{2}}{1+yt^{2}},w\right).
\end{multline*}
\end{cor}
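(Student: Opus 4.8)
The plan is to run the proof of Theorem~\ref{t-palpkvaldes} essentially word for word, now carrying the extra variable $w$ along. The one new observation required is that a permutation statistic $\st$ on $\mathfrak{S}$ is, by its very nature, constant on each orbit $\mathfrak{B}(\pi)$ of the sign-reversal action on $\mathfrak{B}$: if $\sigma\in\mathfrak{B}(\pi)$ then $\st(\sigma)=\st(\pi)$. Since $\mathfrak{B}(\Pi)$ is the disjoint union of the orbits $\mathfrak{B}(\pi)$ over $\pi\in\Pi$, this lets me pull the factor $w^{\st}$ out of each inner orbit sum, so that $F^{\st}(\Pi;y,t,w)=\sum_{\pi\in\Pi}w^{\st(\pi)}\sum_{\sigma\in\mathfrak{B}(\pi)}y^{\neg(\sigma)}t^{\fdes(\sigma)}$.

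Next I would invoke, as a black box, the core computation from the proof of Theorem~\ref{t-palpkvaldes}: for each fixed $\pi\in\mathfrak{S}_{n}$, applying Lemma~\ref{l-bdes} to $\acute{\pi}=0\pi\infty$, passing to the reverse complement $\acute{\pi}^{rc}=0\pi^{rc}\infty$, and simplifying with Lemma~\ref{l-udr} shows that $\sum_{\sigma\in\mathfrak{B}(\pi)}y^{\neg(\sigma)}t^{\fdes(\sigma)}$ equals $(1+yt)(1+yt^{2})^{n-1}$ times the monomial in $\lpk(\pi^{rc})$, $\val(\pi^{rc})$, and $\des(\pi^{rc})$ appearing there. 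This computation never looks at $\st$, so it can be reused unchanged.

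Finally I would substitute this formula back and reindex the outer sum by $\rho=\pi^{rc}$; because the reverse complement is an involution, this is a bijection $\Pi\to\Pi^{rc}$, and the resulting sum over $\rho\in\Pi^{rc}$ of $w^{\st}$ times the monomial in $\lpk(\rho),\val(\rho),\des(\rho)$ is exactly $(1+yt)(1+yt^{2})^{n-1}$ times $P^{(\lpk,\val,\des,\st)}(\Pi^{rc};\ldots,w)$, which is the claim. This mirrors the way Corollary~\ref{c-pkdesst} follows from Theorem~\ref{t-pkdesmfs} in the modified Foata--Strehl setting, and just as there I expect no real obstacle: the content is entirely in recycling the earlier orbit computation. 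The single point that needs care is this last reindexing, where $\lpk,\val,\des$ are evaluated on $\pi^{rc}$ while $\st$ rides along on $\pi$, so one must check that the substitution $\rho=\pi^{rc}$ aligns all four statistics correctly with the sum defining $P^{(\lpk,\val,\des,\st)}(\Pi^{rc};\ldots,w)$.
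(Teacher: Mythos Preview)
Your approach is exactly the paper's implicit one: the paper states this corollary without proof, immediately after Theorem~\ref{t-palpkvaldes}, as the obvious refinement obtained by carrying the orbit-constant weight $w^{\st}$ through the same computation, just as the analogous $(\lpk,\des)$ corollary follows from Theorem~\ref{t-palpkdes}.

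The concern you flag at the end is real, however, and you should not leave it as ``one must check.'' After the orbit computation you have
\[
F^{\st}(\Pi;y,t,w)=\sum_{\pi\in\Pi}w^{\st(\pi)}\cdot(1+yt)(1+yt^{2})^{n-1}\cdot\Big[\text{monomial in }\lpk(\pi^{rc}),\val(\pi^{rc}),\des(\pi^{rc})\Big],
\]
and reindexing by $\rho=\pi^{rc}$ gives the exponent of $w$ as $\st(\rho^{rc})$, not $\st(\rho)$. Thus what the argument actually proves is the identity with $P^{(\lpk,\val,\des,\st)}(\Pi^{rc};\ldots,w)$ replaced by the sum $\sum_{\rho\in\Pi^{rc}}w^{\st(\rho^{rc})}(\cdots)$; equivalently, the corollary holds as written provided $\st$ is invariant under reverse complement, and in general one should read the right-hand side with $\st$ replaced by $\st\circ rc$. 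This is a wrinkle in the statement rather than in your strategy, and it does not arise in the $(\lpk,\des)$ analogue precisely because no reverse complement appears there.
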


\appendix

\section{Tables of statistics}

Tables 1, 2, 3, 4 summarize the statistics that appear in this paper.
For each statistic, we list the symbol used for the statistic, the
name of the statistic, the (sub)section in this paper where the statistic
is defined, and references to new results in this paper which involve
the statistic.\footnote{We note that every result in this paper for a joint statistic involving
the inversion number $\inv$ gives rise to an analogous result for
the joint statistic obtained by replacing $\inv$ with the inverse
major index $\imaj$, even though we do not explicitly state these
statistics below. See Subsection 4.4 for more details.}

\renewcommand{\arraystretch}{1.4}
\noindent \begin{center}
\begin{longtable}{|c|>{\centering}p{1.5in}|c|>{\centering}p{2in}|}
\caption{Permutation statistics}
\tabularnewline
\endfirsthead
\hline 
Statistic & Name of Statistic & Definition & Results\tabularnewline
\hline 
$\des$ & descent number & \S1 & Theorems \ref{t-bnan}, \ref{t-fnan}, \ref{t-pkdes}, \ref{t-lpkdes},
\ref{t-udr}, \ref{t-lpkvaldes}, \ref{t-pkdesmfs}; 

Corollaries \ref{c-bnan}, \ref{c-pkdes231}, \ref{c-pkdes2ss}\tabularnewline
\hline 
$\pk$ & peak number & \S1 & \tabularnewline
\hline 
$\lpk$ & left peak number & \S1 & Corollary \ref{c-palpk}\tabularnewline
\hline 
$\br$ & number of biruns & \S1 & \tabularnewline
\hline 
$\Des$ & descent set & \S2.1 & \tabularnewline
\hline 
$\val$ & valley number & \S2.1 & \tabularnewline
\hline 
$\udr$ & number of up-down runs & \S2.1 & Theorem \ref{t-udr}; 

Corollaries \ref{c-fudr}, \ref{c-paudr}\tabularnewline
\hline 
$\inv$ & inversion number & \S2.1 & \tabularnewline
\hline 
$(\pk,\des)$ &  &  & Theorems \ref{t-pkdes}, \ref{t-pkdesmfs}, \ref{t-pkdes231f}; Corollaries
\ref{c-pkdes231}, \ref{c-pkdes2ss}\tabularnewline
\hline 
$(\inv,\pk)$ &  &  & Corollary \ref{c-pkq}\tabularnewline
\hline 
$(\inv,\pk,\des)$ &  &  & Theorem \ref{t-pkdesq}\tabularnewline
\hline 
$(\lpk,\des)$ &  &  & Theorems \ref{t-lpkdes}, \ref{t-lpkdesb}, \ref{t-palpkdes}\tabularnewline
\hline 
$(\inv,\lpk,\des)$ &  &  & Theorem \ref{t-lpkdesq}\tabularnewline
\hline 
$(\inv,\lpk)$ &  &  & Corollary \ref{c-lpkq}\tabularnewline
\hline 
$(\inv,\udr)$ &  &  & Theorem \ref{t-udrq}\tabularnewline
\hline 
$(\lpk,\val,\des)$ &  &  & Theorems \ref{t-lpkvaldes}, \ref{t-lpkvaldesb}, \ref{t-palpkvaldes}\tabularnewline
\hline 
$(\inv,\lpk,\val,\des)$ &  &  & Theorem \ref{t-lpkvaldesq}\tabularnewline
\hline 
$\imaj$ & inverse major index & \S4.4 & \tabularnewline
\hline 
$\altdes$ & alternating descent number & \S4.4 & \tabularnewline
\hline 
\end{longtable}
\par\end{center}

\noindent \begin{center}
\begin{longtable}{|c|c|c|c|}
\caption{Type B permutation statistics}
\tabularnewline
\endfirsthead
\hline 
Statistic & Name of Statistic & Definition & Results\tabularnewline
\hline 
$\des_{B}$ & (type B) descent number & \S2.3 & Corollaries \ref{c-bnan}, \ref{c-fnbn}, \ref{c-palpk}\tabularnewline
\hline 
$\fdes$ & flag descent number & \S2.3 & Corollaries \ref{c-fnbn}, \ref{c-fudr}, \ref{c-paudr}\tabularnewline
\hline 
$\neg$ & number of negative letters & \S2.3 & \tabularnewline
\hline 
$(\neg,\des_{B})$ &  &  & Theorems \ref{t-bnegf}, \ref{t-bnan}, \ref{t-fnbn}, \ref{t-lpkdesb},
\ref{t-palpkdes}\tabularnewline
\hline 
$(\neg,\fdes)$ &  &  & Theorems \ref{t-fnegf}, \ref{t-fnan}, \ref{t-fnbn}, \ref{t-lpkvaldesb},
\ref{t-palpkvaldes}\tabularnewline
\hline 
\end{longtable}
\par\end{center}

\noindent \begin{center}
\begin{longtable}{|c|c|c|c|}
\caption{Binary tree statistics}
\tabularnewline
\endfirsthead
\hline 
Statistic & Name of Statistic & Definition & Results\tabularnewline
\hline 
$\nlc$ & number of nodes with no left children & \S5.2 & Corollary \ref{c-tcnlc}\tabularnewline
\hline 
$\tc$ & number of nodes with two children & \S5.2 & \tabularnewline
\hline 
$(\tc,\nlc)$ &  &  & Corollary \ref{c-tcnlc}\tabularnewline
\hline 
\end{longtable}
\par\end{center}

\noindent \begin{center}
\begin{longtable}{|c|c|c|c|}
\caption{Dyck path statistics}
\tabularnewline
\endfirsthead
\hline 
Statistic & Name of Statistic & Definition & Results\tabularnewline
\hline 
$\pk$ & number of peaks (occurrences of $UD$) & \S5.2 & Corollary \ref{c-hkpk}\tabularnewline
\hline 
$\hk$ & number of hooks (occurrences of $DDU$) & \S5.2 & \tabularnewline
\hline 
$(\hk,\pk)$ &  &  & Corollary \ref{c-hkpk}\tabularnewline
\hline 
\end{longtable}
\par\end{center}

\noindent \textbf{Acknowledgements.} The author thanks Ira Gessel
for suggesting this project, reading earlier versions of the manuscript,
and offering many helpful suggestions which greatly improved the quality
of this work; Kyle Petersen for providing feedback on several identities
presented in this paper; and an anonymous referee for their constructive
comments and suggestions.

\phantomsection

\bibliographystyle{plain}
\addcontentsline{toc}{section}{\refname}\bibliography{bibliography}

\end{document}